\numberwithin{equation}{section}
\tikzset{every loop/.style={}}
\tikzset{
  symbol/.style={
    draw=none,
    every to/.append style={
      edge node={node [sloped, allow upside down, auto=false]{$#1$}}}
  }
}
\newtheorem{thm}{Theorem}[section]
\newtheorem{lem}[thm]{Lemma}
\newtheorem{prop}[thm]{Proposition}
\newtheorem{cor}[thm]{Corollary}
\theoremstyle{definition}
\newtheorem{eg}[thm]{Example}
\newtheorem{remark}[thm]{Remark}
\newcommand{\E}{\mathbb{E}}
\renewcommand{\O}{\mathcal{O}}
\renewcommand{\H}{{\mathcal H}}
\newcommand{\M}{{\mathcal M}}
\newcommand{\ie}{\emph{i.e.,} }
\newcommand{\Mbar}{\overline{{\mathcal M}}}
\newcommand{\Z}{\mathbb{Z}}
\newcommand{\Q}{\mathbb{Q}}
\newcommand{\C}{\mathbb{C}}
\newcommand{\Aff}{\mathbb{A}}
\newcommand{\Pro}{\mathbb{P}}
\newcommand{\G}{\mathbb{G}}
\DeclareMathOperator{\Hom}{Hom}
\DeclareMathOperator{\rank}{rank}
\DeclareMathOperator{\Spec}{Spec}
\DeclareMathOperator{\Sym}{Sym}
\DeclareMathOperator{\Proj}{Proj}
\DeclareMathOperator{\GL}{GL}
\DeclareMathOperator{\Chow}{CH}
\DeclareMathOperator{\K}{K}
\DeclareMathOperator{\GG}{G}
\newcommand{\coloneq}{:=}
\DeclareMathOperator{\Ind}{Ind}
\DeclareMathOperator{\Res}{Res}
\begin{document}

\title{The $\K$-theory of the moduli stacks $\M_2$ and $\Mbar_2$}

\author{Dan Edidin}
\address{Department of Mathematics, University of Missouri, Columbia, MO 65211}
\email{edidind@missouri.edu}

\author{Zhengning Hu}
\address{Department of Mathematics, University of Arizona, Tucson, AZ 85721}
\email{zhengninghu@math.arizona.edu}
\subjclass[2020]{14H10, 14C35}

\begin{abstract}
We compute the integral Grothendieck rings of the moduli stacks, $\M_2$, $\Mbar_2$ of
smooth and stable curves of genus two respectively.
We compute $\K_0(\M_2)$ by using the
presentation of $\M_2$ as a global quotient stack given by Vistoli
\cite{Vis:98}. To compute the Grothendieck ring $\K_0(\Mbar_2)$ we
decompose $\Mbar_2$ as $\Delta_1$ and its complement $\Mbar_2 \setminus \Delta_1$ and use their presentations as quotient stacks given by Larson \cite{Lar:21}
to compute the Grothendieck rings. We show that they are torsion-free
and this, together with the Riemann-Roch isomorphism allows us to ultimately
give a presentation for the integral Grothendieck ring $\K_0(\Mbar_2)$.
\end{abstract}

\maketitle

\section{Introduction}

Let $\M_g$ be the moduli stack of smooth curves of genus $g \geq 2$
over a fixed ground field $k$, and let $\Mbar_g$ be its Deligne-Mumford
compactification.  In his landmark paper, Mumford \cite{Mum:83} 
showed that the Chow groups of the coarse moduli space
$\overline{M}_g$ admit a $\Q$-valued intersection product and gave a
full description of the Chow group $\Chow^*(\overline{M}_2)_\Q$ together with
the multiplication table for divisor classes.  After the development of
equivariant intersection theory \cite{EdGr:98}, 
equivariant techniques were used to compute the integral Chow rings of the moduli
stacks $\Mbar_{1,1}, \Mbar_2, \Mbar_{2,1}, \Mbar_3$ \cite{EdGr:98, Lar:21,ViDL:21,dilorenzo2021stable, pernice2023almost} as well
as the stack $\H_g$ parametrizing smooth hyperelliptic curves \cite{EdFu:09,dilorenzo2019chow}.
In this paper, we turn our attention to $\K$-theory and
use equivariant methods to compute the Grothendieck ring of the
Deligne-Mumford stacks $\M_2$ and $\Mbar_2$.

In \cite{Vis:98} Vistoli computed the integral Chow ring
$\Chow^*(\M_2)$ by identifying the stack $\M_2$ with the global
quotient stack $[(V \setminus D)/\GL_2]$ where $V$ is a representation
of $\GL_2$ and $D \subset V$ is a $\GL_2$-invariant subscheme of $V$.
The first result of this paper, is to use Vistoli's 
presentation for $\M_2$
to compute $\K_0(\M_2)$.

\begin{thm}\label{thm-M2}
The Grothendieck ring of the moduli stack $\M_2$ is given by
\[\K_0(\M_2) = \dfrac{\Z[\epsilon,\lambda^{\pm 1}]}{(h_0 + (\epsilon\lambda^{-1})h_3 - \lambda^{-2}h_8,h_1 + \epsilon h_2 - \lambda^{-1}h_7)},\]
where $\epsilon$ is the class of the Hodge bundle $\E$ on $\M_2$, $\lambda$
and $\lambda^{-1}$ are respectively the classes of $\det \E$ and $\det \E^\vee$, and $h_n$ is the class of the symmetric power $\Sym^n \E$. In particular,
it is a free abelian group of rank 18 with a basis given in Appendix~\ref{app.Zbasis}.
\end{thm}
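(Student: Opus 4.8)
The plan is to realize $\K_0(\M_2)$ as a quotient of the representation ring $R(\GL_2)$ through the localization sequence in equivariant $\GG$-theory, and to compute the relevant kernel by resolving the discriminant. Concretely, I would write Vistoli's presentation as $\M_2 = [(V \setminus D)/\GL_2]$, where $V$ is the space of binary sextics (a $\GL_2$-representation) and $D \subset V$ is the discriminant hypersurface of forms with a repeated root. Since $V \setminus D$ and $V$ are smooth, $\K_0(\M_2) = \GG_0^{\GL_2}(V \setminus D)$, and the excision sequence $\GG_0^{\GL_2}(D) \xrightarrow{i_*} \GG_0^{\GL_2}(V) \to \GG_0^{\GL_2}(V \setminus D) \to 0$, together with homotopy invariance $\GG_0^{\GL_2}(V) = R(\GL_2) = \Z[\epsilon,\lambda^{\pm 1}]$, identifies $\K_0(\M_2)$ with $R(\GL_2)/\im(i_*)$. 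Here $\epsilon = [\E]$ is the class of the standard representation, $\lambda = [\det \E]$, and $h_n = [\Sym^n \E]$ is the complete homogeneous symmetric function in the two $K$-theoretic Chern roots of $\E$. The entire problem thus reduces to computing the ideal $\im(i_*)$.

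Next I would resolve $D$. Parametrizing sextics with a repeated root by $(p,g) \mapsto \ell_p^2\, g$, where $p \in \Pro^1$ has associated linear form $\ell_p$ and $g$ is a binary quartic, realizes $D$ as the image of a proper birational map $\pi \colon \tilde D \to V$, where $\tilde D = \{(p,f) : \ell_p^2 \mid f\}$ is the total space of a rank-$5$ subbundle $E$ of the trivial bundle $\Pro^1 \times V$. By homotopy invariance $\GG_0^{\GL_2}(\tilde D) = \GG_0^{\GL_2}(\Pro^1)$, which by the projective-bundle theorem is free of rank two over $R(\GL_2)$ with basis $\{1, [\O(1)]\}$. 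Writing $\tilde D \hookrightarrow \Pro^1 \times V$ as the zero locus of a regular section of the rank-two quotient bundle $Q = (V \otimes \O)/E$, the self-intersection (Koszul) formula combined with pushforward along $\Pro^1 \times V \to V$ gives, for a line bundle $L$ on $\Pro^1$, $\pi_*[\O_{\tilde D} \otimes L] = \chi\!\left(\Pro^1,\, L \otimes \lambda_{-1}Q^\vee\right)$, computed as a virtual $\GL_2$-representation, where $\lambda_{-1}Q^\vee = \sum_i (-1)^i [\wedge^i Q^\vee]$. Applying this with $L = \O$ and $L = \O(1)$ produces two explicit elements of $R(\GL_2)$, which I expect to match $r_1 = h_0 + \epsilon\lambda^{-1}h_3 - \lambda^{-2}h_8$ and $r_2 = h_1 + \epsilon h_2 - \lambda^{-1}h_7$ once $E$ and $Q$ are identified as equivariant bundles on $\Pro^1$ and the equivariant Euler characteristics are evaluated; the dimension count ($\tilde D$ and $D$ are of codimension one in $V$) makes the two resulting relations land in the expected codimensions, matching the degrees of Vistoli's two Chow relations under the Chern character.

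The main obstacle is proving that $\im(i_*) = \im(\pi_*)$, i.e. that these two classes generate the whole kernel. This needs two inputs. First, that $\pi_*\O_{\tilde D} = \O_D$ with no higher-direct-image corrections (normality and rational singularities of the discriminant), so that $i_*[\O_D]$ is indeed computed by $\pi_*$. Second, that the induced pushforward $\GG_0^{\GL_2}(\tilde D) \to \GG_0^{\GL_2}(D)$ is surjective, so that no classes supported on the singular locus $D_{\mathrm{sing}}$ (sextics with a triple root or two distinct double roots) contribute further relations. I would control $D_{\mathrm{sing}}$ via the excision sequence $\GG_0^{\GL_2}(D_{\mathrm{sing}}) \to \GG_0^{\GL_2}(D) \to \GG_0^{\GL_2}(D \setminus D_{\mathrm{sing}}) \to 0$, observing that $\pi$ is an isomorphism over $D \setminus D_{\mathrm{sing}}$ and that each stratum of $D_{\mathrm{sing}}$ is again a homogeneous-bundle image over a projective space, so that a Noetherian induction on the stratification shows their contributions already lie in the ideal $(r_1,r_2)$. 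With $\im(i_*) = (r_1,r_2)$ established, it remains a purely algebraic task to present $R(\GL_2)/(r_1,r_2)$, check that it is torsion-free, and exhibit the explicit basis; here the Riemann-Roch comparison $\K_0(\M_2)\otimes\Q \cong \Chow^*(\M_2)_\Q$ supplies the expected rank $18$ and a consistency check, and the basis is the one recorded in Appendix~\ref{app.Zbasis}.
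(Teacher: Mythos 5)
Your overall architecture—Vistoli's presentation, homotopy invariance $\GG_0(V,\GL_2)=R(\GL_2)$, and the localization sequence reducing everything to the ideal $\im(i_*)\subset R(\GL_2)$—is the same as the paper's, but your method for computing the pushforwards is genuinely different. The paper projectivizes to $\Pro^6$, restricts to the maximal torus $T_2$, computes the pushforwards $(\pi_r)_*$ by fixed-point localization, and then descends to $\GL_2$ via the split injection $R(\GL_2)\hookrightarrow R(T_2)$; you stay on the affine space and compute $\pi_*$ via the Koszul resolution of $\tilde D\subset \Pro^1\times V$ and an equivariant Euler characteristic over $\Pro^1$. That is a legitimate and arguably cleaner route to the two classes $r_1,r_2$ (it avoids the torus-descent step entirely), and your first ``input'' is in fact unnecessary: you never need $\pi_*\O_{\tilde D}=\O_D$ or rational singularities of the discriminant, because the argument only ever uses images of $\GG_0$-classes, never the class $[\O_D]$ by itself.

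The genuine gap is in your treatment of $D_{\mathrm{sing}}$. First, $\tilde D\to D$ is not an envelope: over the locus of sextics with two distinct double roots it is generically $2\!:\!1$ (and $3\!:\!1$ over squares of cubics), and since the preimage of each such stratum is irreducible, no subvariety of $\tilde D$ maps birationally onto it; so the surjectivity of $\GG_0(\tilde D,\GL_2)\to\GG_0(D,\GL_2)$ that you assert should not be expected to hold. What your excision/induction argument actually proves is weaker but sufficient: $\im(i_*)$ is the ideal generated by pushforwards from resolutions of \emph{all} strata, i.e.\ from $\Pro^1\times\Pro^4$, $\Pro^2\times\Pro^2$ and $\Pro^3\times\Pro^0$ (this is exactly the equivariant Chow envelope the paper uses, citing Anderson--Payne for surjectivity). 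Second, and this is the real issue, the statement that the $\Delta_2$- and $\Delta_3$-contributions ``already lie in the ideal $(r_1,r_2)$'' does \emph{not} follow from Noetherian induction; the induction only produces them as additional generators. Their redundancy is a separate, nontrivial ideal-membership computation, which the paper carries out explicitly (with SymPy). Without it you only obtain $(r_1,r_2)\subseteq\im(i_*)$, and the asserted presentation could a priori have extra relations. Finally, your consistency check is misstated: Riemann--Roch for Deligne--Mumford stacks gives $\K_0(\M_2)_\C\cong\Chow^*(I\M_2)_\C$ for the \emph{inertia} stack, whereas $\Chow^*(\M_2)_\Q$ has rank $3$, not $18$; the paper in fact gets rank $18$ and torsion-freeness for $\M_2$ from a direct Macaulay2 basis computation over $\Z$ and over $\Q$, not from Riemann--Roch.
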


It is not known whether $\Mbar_2$ can be expressed as a quotient
stack $[(V\setminus Y)/G]$ where $V$ is a representation of a linear
algebraic group $G$ with $Y \subset V$ a $G$-invariant subscheme of
$Y$.  (Following the terminology of \cite{ViDL:21} we call such a
presentation an {\em easy presentation}.) As a result, the computation
of $\Chow^*(\Mbar_2)$ is much more difficult than the corresponding
computation for the stack $\M_2$.  There are two independent
computations of $\Chow^*(\Mbar_2)$.  In \cite{Lar:21}, Larson stratified
$\Mbar_2$ into the boundary divisor $\Delta_1$ and its complement $\Mbar_2 \setminus \Delta_1$ and showed that both strata have easy presentations.
The
ring structure on $\Chow^*(\Mbar_2)$ is computed using the
localization exact sequence for higher Chow groups and a 
careful study of the first higher Chow groups of $\Mbar_2\setminus \Delta_1$.
In an alternative
approach that avoids the use of higher Chow groups, Di Lorenzo and  Vistoli
\cite{ViDL:21} embedded $\Mbar_2$ as an open substack of a vector bundle
over the stack $\mathcal{P}$ of polarized twisted conics.  The
integral Chow ring $\Chow^*(\mathcal{P})$ is computed using a patching
technique for integral Chow rings of Artin stacks, and the Chow ring
of the Deligne-Mumford stack $\Mbar_2$ is realized as quotient of this ring.

The approach used here is closer to Larson's approach for computing
$\Chow^*(\Mbar_2)$. However, we can avoid the use of higher $\K$-theory as we now explain.
We first use equivariant $\K$-theory to compute 
the Grothendieck rings of $\Delta_1$ and its complement $\Mbar_2\setminus\Delta_1$,
by viewing them as stack quotients in open sets in representations \cite{Lar:21}.
However, unlike for Chow groups, we can show that the Grothendieck groups $\K_0(\Delta_1)$ and $\K_0(\Mbar_2\setminus \Delta_1)$
are torsion-free of ranks $65$ and $32$ respectively. It follows
from the localization sequence
\[ \K_0(\Delta_1) \to \K_0(\Mbar_2) \to \K_0(\Mbar_2\setminus\Delta_1) \to 0\]
that $\K_0(\Mbar_2)$ has rank at most $97$. 

On the other hand, we can compute the rank of 
$\K_0(\Mbar_2)$ using 
the Riemann-Roch theorem for Deligne-Mumford stacks \cite{Edi:13}
which states that there is a Riemann-Roch isomorphism $\K_0(\Mbar_2) \otimes \C
\stackrel{\tau} \to \Chow^*(I\Mbar_2)_\C$, where $I\Mbar_2$ is the inertia stack.
The rank of $\Chow(I\Mbar_2)$ can be computed using the results in Pagani's
thesis \cite{Pag:09} for Chen-Ruan cohomology, although it would not be too difficult to make the calculation directly, and we conclude that $\rank \K_0(\Mbar_2) = 97$. 
It follows that we have a short exact sequence
\[ 0 \to  \K_0(\Delta_1) \to \K_0(\Mbar_2) \to \K_0(\Mbar_2\setminus\Delta_1) \to 0\]
and thus $\K_0(\Mbar_2)$ is free abelian of rank $97$.
Using this exact sequence and our descriptions of the generators for
$\K_0(\Delta_1)$ and $\K_0(\Mbar_2\setminus \Delta_1)$, we obtain the following description of $\K_0(\Mbar_2)$.


\begin{thm}\label{thm-Mbar2}
The Grothendieck group $\K_0(\Mbar_2)$ is freely generated as an abelian group of rank $97$. Moreover, as a ring, 
\[\K_0(\Mbar_2) \cong \Z[\epsilon,\lambda^{\pm 1},\delta_1^{\pm 1}]/
\left((I_{\Delta_1} \cap I_{\Mbar_2 \setminus \Delta_1}) + \left(1-\delta_1^{-1}\right)I_{\Delta_1}\right).\]
Here $\epsilon$ is the class of Hodge bundle $\E$ on $\Mbar_2$, $\lambda$ is the class of $\det \E$, and $\delta_1$ is the class of the boundary divisor $\Delta_1$. The ideal $I_{\Delta_1}$ is generated by five polynomials in
$\epsilon, \lambda^{\pm 1}, \delta_1^{\pm 1}$ and
the ideal $I_{\Mbar_2 \setminus \Delta_1}$ is generated by three polynomials
  in $\epsilon, \lambda^{\pm 1}$.
\end{thm}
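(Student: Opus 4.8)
The plan is to realise $\K_0(\Mbar_2)$ as an explicit quotient of $R=\Z[\epsilon,\lambda^{\pm 1},\delta_1^{\pm 1}]$ and to identify the kernel, using the localization sequence together with the two strata computations already in hand. Write $i\colon\Delta_1\hookrightarrow\Mbar_2$ and $j\colon\Mbar_2\setminus\Delta_1\hookrightarrow\Mbar_2$ for the two immersions, and let $\phi\colon R\to\K_0(\Mbar_2)$ be the ring homomorphism sending $\epsilon\mapsto[\E]$, $\lambda\mapsto[\det\E]$, $\delta_1\mapsto[\O(\Delta_1)]$. The first step is to record that $\phi$ is surjective: the restriction $j^{*}\phi$ hits all of $\K_0(\Mbar_2\setminus\Delta_1)$ since that ring is generated by the restrictions of $\epsilon,\lambda$, while every remaining class lies in $\im(i_{*})$; and since $i_{*}(1)=[\O_{\Delta_1}]=1-\delta_1^{-1}$, the projection formula together with surjectivity of $i^{*}$ (the generators of $\K_0(\Delta_1)$ lift to $\Mbar_2$) gives $\im(i_{*})=(1-\delta_1^{-1})\K_0(\Mbar_2)\subseteq\im\phi$. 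Combined with the short exact sequence and the freeness of rank $97$ established in the introduction, it then suffices to compute $\ker\phi$.

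Next I would pin down the two ideals. From the easy presentations of the strata one has $\K_0(\Delta_1)=R/I_{\Delta_1}$ realised by $i^{*}\phi$, and $\K_0(\Mbar_2\setminus\Delta_1)=\Z[\epsilon,\lambda^{\pm 1}]/I_{\Mbar_2\setminus\Delta_1}$ realised by $j^{*}\phi$ (under which $\delta_1\mapsto 1$); thus $I_{\Delta_1}=\ker(i^{*}\phi)$ while $\ker(j^{*}\phi)=I_{\Mbar_2\setminus\Delta_1}+(\delta_1-1)$. Setting $J=(I_{\Delta_1}\cap I_{\Mbar_2\setminus\Delta_1})+(1-\delta_1^{-1})I_{\Delta_1}$, the inclusion $J\subseteq\ker\phi$ splits into two checks. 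For the second summand, if $g\in I_{\Delta_1}$ then the projection formula gives
\[(1-\delta_1^{-1})\phi(g)=i_{*}(1)\cdot\phi(g)=i_{*}\bigl(i^{*}\phi(g)\bigr)=0,\]
since $i^{*}\phi(g)=0$; this part is routine.

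The delicate inclusion is $I_{\Delta_1}\cap I_{\Mbar_2\setminus\Delta_1}\subseteq\ker\phi$, and I expect this to be the main obstacle. For $g$ in the intersection, membership in $I_{\Mbar_2\setminus\Delta_1}$ forces $\phi(g)\in\ker(j^{*})=\im(i_{*})$, say $\phi(g)=i_{*}(\eta)$, while membership in $I_{\Delta_1}$ gives $i^{*}\phi(g)=0$, hence $(1-[N^{\vee}])\eta=0$ by the self-intersection formula, where $N$ is the normal bundle of $\Delta_1$ and $[N^{\vee}]=i^{*}\delta_1^{-1}$. Because $i_{*}$ is injective (this is exactly where the rank count forcing left-exactness of the localization sequence is used), deducing $\phi(g)=0$ is equivalent to showing $\eta=0$, i.e. to controlling this excess term; since $1-[N^{\vee}]$ need not be a non-zero-divisor on $\K_0(\Delta_1)$, I would have to use the explicit generators of $I_{\Delta_1}$ and $I_{\Mbar_2\setminus\Delta_1}$ to verify that the class $\eta$ produced by the open-stratum relations actually vanishes.

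Finally I would obtain the reverse inclusion $\ker\phi\subseteq J$ by a spanning argument rather than by hand. Granting $J\subseteq\ker\phi$, the map $\phi$ descends to a surjection $R/J\twoheadrightarrow\K_0(\Mbar_2)$, so it is enough to show that $R/J$ is generated by $97$ elements as an abelian group: a group on $97$ generators that surjects onto the free abelian group $\K_0(\Mbar_2)$ of rank $97$ must do so by an isomorphism, whence $\ker\phi=J$. Concretely, one exhibits the spanning set consisting of the $32$ monomials in $\epsilon,\lambda$ giving the basis of $\K_0(\Mbar_2\setminus\Delta_1)$ together with the $65$ classes $(1-\delta_1^{-1})m$, where $m$ runs over the basis monomials of $\K_0(\Delta_1)$; using the three relations $I_{\Mbar_2\setminus\Delta_1}$ to reduce the $\delta_1$-free part and the relations $(1-\delta_1^{-1})I_{\Delta_1}$ to reduce the part divisible by $1-\delta_1^{-1}$, every monomial in $R$ rewrites modulo $J$ into this list. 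This reduction, which mirrors the splitting $\K_0(\Mbar_2)\cong\K_0(\Delta_1)\oplus\K_0(\Mbar_2\setminus\Delta_1)$, is the computational heart of the argument and is where the explicit generators and a Gröbner-type normal form are needed.
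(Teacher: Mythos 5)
Your proposal follows the paper's own skeleton: the same surjection $\phi\colon R=\Z[\epsilon,\lambda^{\pm 1},\delta_1^{\pm 1}]\to\K_0(\Mbar_2)$, the same candidate ideal $J=(I_{\Delta_1}\cap I_{\Mbar_2\setminus\Delta_1})+(1-\delta_1^{-1})I_{\Delta_1}$, the same rank-$97$ input from Riemann--Roch and Pagani, and the same projection-formula argument for $(1-\delta_1^{-1})I_{\Delta_1}\subseteq\ker\phi$. The genuine gap is exactly the step you flag as ``the main obstacle'': the inclusion $I_{\Delta_1}\cap I_{\Mbar_2\setminus\Delta_1}\subseteq\ker\phi$ is never proved, and your proposed fallback cannot prove it. For $g$ in the intersection you know only that $\phi(g)=i_*(\eta)$ for some $\eta\in\K_0(\Delta_1)$ with $(1-[N^\vee])\eta=0$, and $1-[N^\vee]$ genuinely is a zero divisor on $\K_0(\Delta_1)$: it has virtual rank zero, so it lies in the kernel of the rank homomorphism $\K_0(\Delta_1)\to\Z$, hence is a non-unit, hence a zero divisor in the finite-dimensional algebra $\K_0(\Delta_1)_\Q$, so its annihilator is nonzero even integrally. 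The plan to ``use the explicit generators to verify that the class $\eta$ actually vanishes'' is circular: $\eta$ is defined by $i_*\eta=\phi(g)$, i.e.\ by how $\phi(g)$ sits inside the unknown group $\K_0(\Mbar_2)$, while the only data computable inside the two strata, namely $i^*\phi(g)=0$ and $j^*\phi(g)=0$, pin $\eta$ down only modulo that nonzero annihilator. No Gr\"obner computation in $\K_0(\Delta_1)$ and $\K_0(\Mbar_2\setminus\Delta_1)$ can decide whether $\phi(g)=0$; one needs a statement about $\K_0(\Mbar_2)$ itself. This is precisely what the paper supplies: after applying the snake lemma to the diagram of polynomial rings, it invokes a generalization of \cite[Lemma 4.4]{VeVi:03} to obtain a cartesian square identifying $\ker\phi$ with the fiber product of $I_{\Delta_1}$ and $I_{\Mbar_2\setminus\Delta_1}$ over $I_{\Delta_1}/(1-\delta_1^{-1})$, and membership of the intersection ideal in $\ker\phi$ is read off from that cartesian property. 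Some such patching input is indispensable and is what your write-up is missing.

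A secondary problem lies in your counting argument for $\ker\phi\subseteq J$. The principle itself --- a surjection from an abelian group on $97$ generators onto a free abelian group of rank $97$ is an isomorphism --- is correct, and is a legitimate alternative to the paper's route. But your rewriting procedure reduces ``the $\delta_1$-free part'' using the three relations $S_{10},S_{11},S_{20}$, and these are \emph{not} elements of $J$, hence not relations in $R/J$; indeed $\phi(S_l)\neq 0$ in general, since only its restriction to the open stratum vanishes. To repair the spanning argument you would need the additional fact that every element of $I_{\Mbar_2\setminus\Delta_1}$ is congruent modulo $(1-\delta_1^{-1})$ to an element of $I_{\Delta_1}\cap I_{\Mbar_2\setminus\Delta_1}$ --- a finitely checkable statement, but not automatic, and silently assumed in your reduction. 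By contrast, the paper gets this direction with no computation at all: the snake lemma gives $\ker\phi\subseteq I_{\Delta_1}$ and splits any $x\in\ker\phi$ as an element of $(1-\delta_1^{-1})I_{\Delta_1}$ plus an element of $s(\ker\pi_2)\subseteq I_{\Mbar_2\setminus\Delta_1}$ which is then forced to lie in $I_{\Delta_1}$ as well, hence in the intersection.
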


\subsection{Further directions}
Our proof takes advantage that we can stratify $\Mbar_2$ as a divisor $\Delta_1$
and its complement $\Mbar_2 \setminus \Delta_1$ where both strata have torsion-free 
Grothendieck groups. We suspect that this may be a general phenomenon
whereby open sets in representations of reductive groups have torsion-free
equivariant Grothendieck groups. The intuitive reason for this expectation is that torsion relations
in the Chow group are ``exponentiated'' when they appear in the Grothendieck group.
For example if $D$ is a Cartier divisor on a variety $X$, the relation
$n[D] = 0 \in \Chow^1(X)$ corresponds to the relation $\mathcal{O}(D)^n =1$
in $\K_0(X)$.

If this is a case, then there is reason to believe that the integral Grothendieck rings of other moduli spaces of curves of low genus such $\Mbar_{2,1}$ and $\Mbar_{3}$ can be computed using a relatively direct stratification, and that these computations will be easier than those made for Chow groups in
\cite{dilorenzo2021stable} and \cite{pernice2023almost}.

\begin{remark}[Remark on the ground field] We work over an algebraically closed field. For the calculations of
  $\K_0(\M_2), \K_0(\Delta_1), \K_0(\Mbar_2 \setminus \Delta_1)$ we can work over a field whose characteristic is not 2 or 3. However, for the computation of
  $\K_0(\Mbar_2)$ we use the Riemann-Roch theorem of \cite{Edi:13} which is only proved over $\C$.
\end{remark}

\section{Background on equivariant $\K$-theory} \label{sec.Ktheory}
We give a brief background on equivariant $\K$-theory. For more details
see \cite{Tho:87}.  Let $G$ be an algebraic group acting on a
separated scheme $X$ of finite type over a field $k$.
We denote by $\GG_0(X,G)$ the Grothendieck group of $G$-equivariant
coherent sheaves on $X$ and $\K_0(X,G)$ the Grothendieck group of
$G$-equivariant vector bundles. Tensor product of bundles makes
$\K_0(X,G)$ into a ring which we call the $G$-equivariant Grothendieck ring
of $X$. The Grothendieck group $\GG_0(G,X)$ is a module for this ring.
When $X$ is regular then Thomason's resolution theorem \cite{Tho:87}
implies that
the natural map $\K_0(X,G) \to \GG_0(X,G)$ is an isomorphism. In particular,
if $X = \Spec k$ then $\K_0(X,G) = \GG_0(X,G) = R(G)$ where
$R(G)$ is the (modular) representation ring of $G$.
If $G$ acts freely on $X$ with quotient scheme $X/G$, then
we can identify $\GG_0(X,G)$ with
$\GG_0(X/G)$ and $\K_0(X,G)$ with $\K_0(X/G)$.

If $G$ is connected reductive with simply connected
commutator subgroup, for example $\GL_n$, and $T \subset G$ is a split
maximal torus of $G$, for example $T_n \subset \GL_n$, then the
restriction homomorphism of modules $\GG_0(X,G) \to \GG_0(X,T)$
induces the following relation between $T$-equivariant and $G$-equivariant
$\K$-theory.

\begin{prop}\cite[Proposition 31]{Mer:05}\label{prop.changegroup}
Under the above assumption on $G$ and $T$, the restriction induces an $R(T)$-module isomorphism
\[\GG_0(X,G) \otimes_{R(G)} R(T) \longrightarrow \GG_0(X,T).\]
\end{prop}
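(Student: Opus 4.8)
The plan is to identify the comparison map with pullback along a flag bundle and then prove a Leray--Hirsch theorem for that bundle, with the freeness of $R(T)$ over $R(G)$ as the essential input. First I would make two standard reductions on the torus side. Since the inclusion $T\hookrightarrow B$ into a Borel subgroup realizes $[X/T]\to[X/B]$ as an affine bundle with fiber $B/T\cong U$ (an iterated $\Aff^1$-bundle, $U$ being the unipotent radical), homotopy invariance of equivariant $\GG$-theory gives $\GG_0(X,B)\xrightarrow{\sim}\GG_0(X,T)$; the same argument with $X=\Spec k$ gives $R(B)=R(T)$. Under these identifications the restriction map of the proposition becomes the pullback $f^{\ast}\colon\GG_0(X,G)\to\GG_0(X,B)$ along the flag bundle $f\colon[X/B]\to[X/G]$, whose fibers are $G/B$, and the $R(T)=R(B)$-module structure is the one obtained by pulling back from $\GG_0([\Spec k/B])=R(B)$. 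Thus it suffices to show that $f^{\ast}$ induces an isomorphism $\GG_0(X,G)\otimes_{R(G)}R(T)\xrightarrow{\sim}\GG_0(X,B)$.

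The representation-theoretic input is the theorem of Pittie and Steinberg: because the commutator subgroup of $G$ is simply connected, $R(T)$ is a free module over $R(G)=R(T)^{W}$ of rank $|W|$, and one can choose an explicit Steinberg basis $\{\rho_w\}_{w\in W}$ indexed by the Weyl group $W$. Equivalently, for the universal flag bundle $\pi\colon[\Spec k/B]\to[\Spec k/G]$ there is a companion family $\{\rho_w^{\ast}\}$ dual to $\{\rho_w\}$ for the pushforward pairing, in the sense that $\pi_{\ast}(\rho_w\cdot\rho_{w'}^{\ast})=\delta_{w,w'}$ in $R(G)$. This is exactly where the hypothesis on the commutator subgroup enters: without it the rank of $R(T)$ over $R(G)$ need not be $|W|$ and no such basis exists.

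I would then prove the relative statement by transporting this duality along the flat morphism $[X/G]\to[\Spec k/G]$, using the Cartesian square $[X/B]=[X/G]\times_{[\Spec k/G]}[\Spec k/B]$ that exhibits $f$ as the base change of $\pi$. Pulling the classes $\rho_w,\rho_w^{\ast}$ back to $\bar\rho_w,\bar\rho_w^{\ast}\in\GG_0(X,B)$ and combining the projection formula for the proper map $f$ with compatibility of proper pushforward and flat base change, one obtains the relative duality $f_{\ast}(\bar\rho_w\cdot\bar\rho_{w'}^{\ast})=\delta_{w,w'}$ in $\GG_0(X,G)$. A formal Leray--Hirsch argument then shows that every $\beta\in\GG_0(X,B)$ has the unique expansion $\beta=\sum_{w\in W}f^{\ast}\!\big(f_{\ast}(\beta\cdot\bar\rho_w^{\ast})\big)\cdot\bar\rho_w$, so that $\{\bar\rho_w\}$ is a $\GG_0(X,G)$-basis of $\GG_0(X,B)$; this is precisely the asserted isomorphism.

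The hard part is the relative duality $f_{\ast}(\bar\rho_w\cdot\bar\rho_{w'}^{\ast})=\delta_{w,w'}$ over an arbitrary, possibly singular, base $X$. The subtlety is that the Bruhat cells of $G/B$ are only $B$-stable, not $G$-stable, so one cannot directly filter $[X/B]$ by $G$-invariant relative cells and run the localization sequence fiber by fiber. I would handle this either by twisting the Bruhat stratification by the universal $G$-torsor $X\to[X/G]$ to produce an honest cellular fibration structure on $f$, to which the cellular fibration lemma applies, or---more cleanly---by deducing the relative duality formally from the point case, using that the matrix $\big(f_{\ast}(\bar\rho_w\cdot\bar\rho_{w'}^{\ast})\big)$ is the image under $R(G)\to\GG_0(X,G)$ of the identity matrix $\big(\pi_{\ast}(\rho_w\cdot\rho_{w'}^{\ast})\big)$ computed on $[\Spec k/G]$. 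Once the duality is established, the Leray--Hirsch formalism and the bookkeeping of the module structures are routine.
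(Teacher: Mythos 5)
First, a remark on the comparison itself: the paper does not prove this proposition --- it is quoted from Merkurjev \cite{Mer:05}*{Proposition 31} --- so your proposal has to be judged against the standard proof in the literature, whose architecture (reduce $T$ to a Borel $B$ by homotopy invariance, identify restriction with pullback along the flag bundle $f\colon [X/B]\to[X/G]$, invoke Pittie--Steinberg freeness, expand against a dual basis) you have correctly reproduced in outline. Your reductions are fine, and your second method for the relative duality $f_*(\bar\rho_w\cdot\bar\rho_{w'}^*)=\delta_{w,w'}$ --- base change of the point case along the flat map $[X/G]\to[\Spec k/G]$ combined with the projection formula --- is clean and correct. The genuine gap is at the step you call routine. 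The duality relations do \emph{not} formally imply the expansion $\beta=\sum_w f^*\bigl(f_*(\beta\cdot\bar\rho_w^*)\bigr)\cdot\bar\rho_w$ for every $\beta$. What they give, via the projection formula, is that $\Phi\colon\bigoplus_{w\in W}\GG_0(X,G)\to\GG_0(X,B)$, $(a_w)\mapsto\sum_w f^*(a_w)\bar\rho_w$, is split injective, with splitting $\Psi\colon\beta\mapsto\bigl(f_*(\beta\cdot\bar\rho_w^*)\bigr)_w$, since $\Psi\circ\Phi=\id$. Your claimed expansion is equivalent to $\ker\Psi=0$, i.e.\ to surjectivity of $\Phi$ --- the statement that the Steinberg classes \emph{generate} $\GG_0(X,B)$ over $\GG_0(X,G)$. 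Pairing against finitely many classes can never rule out a direct summand that pairs to zero with all of them; generation is exactly the non-formal geometric content of the theorem. So you have mislocated the difficulty: the relative duality is the easy half (as your own base-change argument shows), and the ``Leray--Hirsch formalism'' is where the proof is actually missing.

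Your fallback for the geometric input also does not work as stated. ``Twisting the Bruhat stratification by the universal torsor $X\to[X/G]$'' just reproduces $X\times G/B$ with its diagonal $G$-action, where the strata $X\times BwB/B$ are still only $B$-stable; and pulling back along the torsor to get the honest non-equivariant cell structure on $X\times G/B\to X$ and then ``descending'' is precisely what cannot be done formally, since $\K$-theory does not descend along $X\to[X/G]$ (that failure is the whole point of equivariant $\K$-theory). The known repairs are genuinely geometric: Merkurjev's filtration argument, or passage to the double flag bundle, whose relative-position strata \emph{are} $G$-stable, followed by a further descent step. Note finally that in the only case this paper uses, $G=\GL_2$ (and similarly $\GL_n$), generation is easy and your proof closes up: $[X/B]\to[X/\GL_2]$ is the projectivization of the rank-two bundle induced by the defining representation, so the equivariant projective bundle theorem (Proposition~\ref{prop.projectivebundle}) shows directly that $\GG_0(X,B)$ is free over $\GG_0(X,G)$ on $\{1,[\O(1)]\}$, matching the Steinberg basis $\{1,a\}$ of $R(T_2)$ over $R(\GL_2)$; it is only for general $G$ with simply connected derived subgroup that the fiberwise cell structure must be exploited in one of the nontrivial ways above.
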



The localization sequence for equivariant Grothendieck groups states that:
\begin{prop}\cite[Theorem 2.7]{Tho:87}\label{prop.longexact}
Consider a closed embedding $i : Z \hookrightarrow X$ with the complementary open immersion $j : U = X \setminus Z \hookrightarrow X$. There is an exact sequence
\[ \GG_0(Z,G) \stackrel{i_*}{\to} \GG_0(X,G) \stackrel{j^*}{\to} \GG_0(U,G) \to 0,\]
which can be extended on the left with the higher $G$-equivariant $\K$-theory groups.
\end{prop}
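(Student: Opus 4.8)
The plan is to realize each of the three Grothendieck groups as the zeroth $\K$-group of an abelian category of equivariant coherent sheaves, and then to deduce the sequence from Quillen's localization and d\'evissage theorems in their equivariant forms. Write $\mathcal{M}_G(X)$ for the abelian category of $G$-equivariant coherent sheaves on $X$; its $\K$-theory in the sense of Quillen's $Q$-construction satisfies $\pi_0 = \GG_0(X,G)$, and likewise for $Z$ and $U$. The closed immersion $i$ induces an exact pushforward $i_* : \mathcal{M}_G(Z) \to \mathcal{M}_G(X)$, exact because $i$ is a closed immersion, and the open immersion $j$ induces an exact restriction $j^* : \mathcal{M}_G(X) \to \mathcal{M}_G(U)$.

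The central step is to exhibit the localization structure at the level of abelian categories. Let $\mathcal{M}_G^Z(X) \subset \mathcal{M}_G(X)$ be the full subcategory of equivariant coherent sheaves whose support is contained set-theoretically in $Z$. Since $Z$ is $G$-invariant this is a Serre subcategory, and I would verify that $j^*$ induces an equivalence between the Serre quotient $\mathcal{M}_G(X)/\mathcal{M}_G^Z(X)$ and $\mathcal{M}_G(U)$. Concretely this rests on two facts: every equivariant coherent sheaf on $U$ is the restriction of one on $X$ (equivariant extension across a closed subscheme), and two equivariant sheaves on $X$ become isomorphic after restriction to $U$ exactly when they agree modulo sheaves supported on $Z$.

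With the Serre quotient identified, Quillen's localization theorem for the $\K$-theory of a quotient abelian category yields a long exact sequence
\[ \cdots \to \GG_1(U,G) \to \K_0(\mathcal{M}_G^Z(X)) \to \GG_0(X,G) \to \GG_0(U,G) \to 0, \]
where $\K_0(\mathcal{M}_G^Z(X)) = \GG_0^Z(X,G)$ is the $\K$-theory of sheaves supported on $Z$; this accounts for the clause about extending on the left by higher $\K$-theory. It then remains to identify $\GG_0^Z(X,G)$ with $\GG_0(Z,G)$ through $i_*$, for which I would invoke the equivariant d\'evissage theorem: a coherent sheaf $\mathcal{F}$ supported on $Z$ is annihilated by some power of the $G$-invariant ideal sheaf $I$ of $Z$, so it carries a finite filtration by the $I^k\mathcal{F}$ whose successive quotients are $\O_Z$-modules, that is, pushed forward from $Z$. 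Hence $i_* : \GG_0(Z,G) \to \GG_0^Z(X,G)$ is an isomorphism, and substituting this identification produces the stated four-term sequence.

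The main obstacle is not the formal homological algebra but checking that Quillen's two foundational theorems survive the passage to the equivariant category. One must confirm that $\mathcal{M}_G(X)$ is a Noetherian abelian category of the right type, equivalently the category of coherent sheaves on the quotient stack $[X/G]$, that the ideal-sheaf filtrations above exist \emph{equivariantly} (guaranteed here because $Z$, and hence $I$, is $G$-invariant), and that the Serre quotient is computed correctly in the presence of the group action. These are exactly the technical points settled in \cite{Tho:87}; the content of the argument is the equivariant bookkeeping rather than any departure from Quillen's original proof.
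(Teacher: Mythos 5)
Your proposal is correct and follows essentially the same route as the source the paper relies on: the paper offers no proof of this proposition, citing it verbatim from Thomason \cite{Tho:87}, whose argument is exactly the one you outline---Quillen's localization theorem applied to the Serre subcategory of equivariant coherent sheaves supported on $Z$, followed by equivariant d\'evissage to identify the $\K$-theory of that subcategory with $\GG_0(Z,G)$ via $i_*$. The technical points you flag (equivariant extension of coherent sheaves from $U$ to $X$, and the identification of the Serre quotient with the category of equivariant coherent sheaves on $U$) are precisely the ones Thomason settles, so deferring them to that reference is appropriate.
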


Moreover, if $i : Z \hookrightarrow X$ is a closed embedding of
regular $G$-schemes, then we have the following equivariant self-intersection formula, which is proved in the course of the proof of \cite[Theorem 3.7]{VeVi:02}.

\begin{prop}\cite{VeVi:02}\label{prop.selfintersection}
For any $\alpha \in \K_0(Z,G)$, 
\[i^*i_*(\alpha) = \lambda_{-1}\left(N_i^\vee\right) \cap \alpha\]
where $N_i^\vee$ is the conormal bundle associated to the embedding $i$, and the $\K$-theoretic Euler class $\lambda_{-1}\left(N_i^\vee\right)$ is defined by $\sum_{n = 0}^{\rank N_i^\vee} (-1)^n [\wedge^n N_i^\vee]$.
\end{prop}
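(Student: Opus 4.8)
The plan is to unwind the two maps at the level of (equivariant) sheaves and to reduce the identity to the local Koszul resolution furnished by the regular embedding. Since both $X$ and $Z$ are regular, Thomason's resolution theorem identifies $\K_0(-,G)$ with $\GG_0(-,G)$, so I may write $\alpha = [\mathcal{F}]$ for a $G$-equivariant coherent sheaf $\mathcal{F}$ on $Z$; then $i_*\alpha = [i_*\mathcal{F}]$ is the class of the honest (exact, since $i$ is a closed immersion) pushforward. The point requiring care is that $i$ need not be flat, so pulling the coherent class $[i_*\mathcal{F}]$ back along $i$ is computed by a \emph{derived} restriction: because $i$ is a regular embedding it has finite Tor-dimension, and under the identification $\K_0(X,G)\cong \GG_0(X,G)$ one has
\[ i^*i_*(\alpha) \;=\; \sum_{n\ge 0}(-1)^n\bigl[\mathcal{T}or_n^{\O_X}(\O_Z,\,i_*\mathcal{F})\bigr]\ \in\ \GG_0(Z,G), \]
a finite alternating sum of $G$-equivariant coherent sheaves on $Z$.

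The heart of the argument is the computation of these $\mathcal{T}or$ sheaves, which is local on $X$ and automatically compatible with the $G$-action. On a suitable $G$-invariant chart, $Z$ is the zero locus of a regular sequence $f_1,\dots,f_c$ with $c = \rank N_i^\vee$, and the Koszul complex $\wedge^\bullet \O_X^{\oplus c}\to \O_Z$ is a finite locally free resolution of $\O_Z$. Tensoring this resolution with $i_*\mathcal{F}$ over $\O_X$ turns each component of every differential into multiplication by some $f_j$, and these annihilate the $\O_Z$-module $\mathcal{F}$; hence all differentials in the tensored complex vanish and
\[ \mathcal{T}or_n^{\O_X}(\O_Z,\,i_*\mathcal{F})\ \cong\ \wedge^n\!\bigl(\O_Z^{\oplus c}\bigr)\otimes_{\O_Z}\mathcal{F}. \]
Identifying the free rank-$c$ module with the conormal bundle $N_i^\vee = \mathcal{I}/\mathcal{I}^2$ (whose local frame is dual to the $df_j$) gives a canonical, $G$-equivariant, globally well-defined isomorphism $\mathcal{T}or_n^{\O_X}(\O_Z,i_*\mathcal{F})\cong \wedge^n N_i^\vee\otimes \mathcal{F}$.

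Substituting into the alternating sum and using multiplicativity of the class map in $\GG_0(Z,G)=\K_0(Z,G)$ then yields
\[ i^*i_*(\alpha)=\sum_{n=0}^{c}(-1)^n\,[\wedge^n N_i^\vee\otimes \mathcal{F}]=\Bigl(\sum_{n=0}^{c}(-1)^n[\wedge^n N_i^\vee]\Bigr)\cap[\mathcal{F}]=\lambda_{-1}(N_i^\vee)\cap\alpha, \]
which is the claim. I expect the main obstacle to lie in the first paragraph rather than in the computation: one must justify carefully, in the \emph{equivariant} (and possibly stacky) setting, that transporting the vector-bundle pullback $i^*$ across the isomorphism $\K_0\cong\GG_0$ really computes the derived $\mathcal{T}or$-pullback on coherent classes, and that the local Koszul identifications glue $G$-equivariantly; both follow from Thomason's equivariant resolution and localization machinery together with the finite Tor-dimension of the lci map $i$. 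As an alternative that sidesteps the local-to-global gluing, one can deform to the normal cone to reduce to the zero section $i_0\colon Z\hookrightarrow N_i$ of the normal bundle, where $\O_Z$ admits the \emph{global} Koszul resolution $\wedge^\bullet \pi^*N_i^\vee$ and the computation above becomes manifestly global; the cost is then setting up the specialization maps in equivariant $\K$-theory compatibly.
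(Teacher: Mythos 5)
The paper does not actually prove this proposition: it is imported verbatim from Vezzosi--Vistoli \cite{VeVi:02}, where it is established in the course of the proof of their Theorem 3.7. So your attempt must be judged as a self-contained proof, and in outline it is the classical and correct one: identify $\K_0$ with $\GG_0$ via Thomason's equivariant resolution theorem, compute $i^*i_*[\mathcal{F}]$ as the alternating sum $\sum_n(-1)^n[\Tor_n^{\O_X}(\O_Z,i_*\mathcal{F})]$, and evaluate these Tor sheaves by observing that the Koszul differentials vanish after tensoring with a sheaf annihilated by the ideal of $Z$. The bridging step you flag as delicate is indeed standard: resolve $i_*\mathcal{F}$ by a finite complex $E_\bullet$ of equivariant vector bundles, note that the transported pullback is $\sum_j(-1)^j[E_j|_Z]$, and use that the class of a bounded complex in $\GG_0$ equals the alternating sum of the classes of its homology sheaves.

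There is, however, one step that fails as literally written: ``on a suitable $G$-invariant chart, $Z$ is the zero locus of a regular sequence.'' Invariant open sets are unions of orbits and can be forced to be very large, so a regular embedding need not become a global complete intersection on any invariant neighborhood. A counterexample occurs in this very paper: for the Veronese conic $Z=\pi_1(\Pro^1)\subset\Pro^2$ with its $\GL_2$-action there are exactly two orbits, so the only invariant open subset meeting $Z$ is $\Pro^2$ itself, and $Z$ is not the zero scheme of a regular function there since $\O_{\Pro^2}(Z)\cong\O(2)$ is nontrivial. The repair is the statement you gesture at but do not quite make: compute the Tor sheaves on ordinary, non-invariant affine charts (Tor is Zariski-local), and note that the identification is induced by a \emph{canonical global} map, namely the action of the Tor algebra $\Tor_\bullet^{\O_X}(\O_Z,\O_Z)\cong\wedge^\bullet N_i^\vee$ (with $\Tor_1\cong \mathcal{I}/\mathcal{I}^2$ canonically) on the module $\Tor_\bullet^{\O_X}(\O_Z,i_*\mathcal{F})$; this map is an isomorphism because it is so locally by the Koszul computation, and being canonical it is automatically compatible with the $G$-equivariant structures, so no invariant charts are needed. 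With that substitution your proof is complete; your alternative via deformation to the normal cone, where the Koszul resolution of the zero section is global, is also viable and is closer in spirit to the argument in \cite{VeVi:02}.
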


Another fundamental property of equivariant $\K$-theory we use in the computation is the localization theorem for action of diagonalizable groups, which allows us to reduce the global calculations to those on the fixed point locus.

\begin{prop}\cite[Theorem 3.2]{Nie:74}\label{prop.localization}
Let $D$ be a diagonalizable group acting on a smooth quasi-projective scheme $X$ thus the fixed point scheme $X^D$ is smooth. Let $i : X^D \hookrightarrow X$ be the inclusion of the fixed locus of $D$ in $X$, and $S$ the multiplicative subset of $R(D)$ generated by $\K$-theoretic Euler classes of nontrivial characters of $D$. Then the natural $R(D)$-module homomorphism
\[i_* : S^{-1}\K_0(X^D,D) \longrightarrow S^{-1}\K_0(X,D)\] 
is an isomorphism, with an explicit inverse given by multiplication by $\lambda_{-1}\left(N_{X^D}^\vee X\right)$ arising from the self-intersection formula for the regular embedding $i : X^D \hookrightarrow X$. Namely, for $\alpha \in S^{-1}\K_0(X,D)$, we have
\[\alpha = i_*\left(\dfrac{i^* \alpha}{\lambda_{-1}\left(N_{X^D}^\vee X\right)}\right).\]
\end{prop}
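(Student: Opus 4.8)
The plan is to prove the two assertions of Proposition~\ref{prop.localization} separately: first that $i_*$ becomes split injective after inverting $S$, with the indicated left inverse, and then that it becomes surjective, the latter being the substantive point. Throughout I would use that $X$ and $X^D$ are smooth, so that $i\colon X^D \hookrightarrow X$ is a regular embedding and the self-intersection formula of Proposition~\ref{prop.selfintersection} applies, and that for the trivial $D$-action on $X^D$ one has a K\"unneth-type decomposition $\K_0(X^D,D) \cong \K_0(X^D)\otimes_{\Z} R(D)$ compatible with the $R(D)$-module structure.

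For the injective direction, Proposition~\ref{prop.selfintersection} gives $i^*i_*(\alpha) = \lambda_{-1}(N^\vee)\cap\alpha$ for $\alpha \in \K_0(X^D,D)$, where $N = N_{X^D}X$, so it suffices to show that the Euler class $\lambda_{-1}(N^\vee)$ is a unit in $S^{-1}\K_0(X^D,D)$. Since $D$ acts trivially on $X^D$, the equivariant bundle $N$ splits into weight eigenbundles $N = \bigoplus_\chi N_\chi$, and because $N$ is the normal bundle to the fixed locus only nontrivial characters $\chi$ occur. By the splitting principle it is enough to treat a line bundle summand $L$ of weight $\chi \neq 1$: writing $[L^\vee] = \chi^{-1}\ell$ with $\ell = [L_0^\vee] \in \K_0(X^D)$ the underlying non-equivariant class, one has
\[\lambda_{-1}(L^\vee) = 1 - \chi^{-1}\ell = (1-\chi^{-1}) + \chi^{-1}(1-\ell).\]
Here $1-\chi^{-1}\in S$ is a unit in $S^{-1}R(D)$, while $1-\ell$ is nilpotent because $L_0$ is a line bundle on the finite-dimensional scheme $X^D$; a unit plus a nilpotent is a unit, so $\lambda_{-1}(L^\vee)$, and hence the product $\lambda_{-1}(N^\vee)$, is a unit after localization. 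Therefore $i_*$ is injective once $S$ is inverted and admits the left inverse $\alpha \mapsto \lambda_{-1}(N^\vee)^{-1}\, i^*\alpha$, which yields the displayed formula as soon as surjectivity is known.

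For the surjective direction (the main obstacle), the localization sequence of Proposition~\ref{prop.longexact} applied to $X^D \hookrightarrow X \hookleftarrow U := X\setminus X^D$, together with the exactness of $S^{-1}(-)$, reduces surjectivity of $i_*$ to the vanishing $S^{-1}\K_0(U,D) = 0$. To prove this I would argue by Noetherian induction using a stratification of $U$ into $D$-invariant locally closed subschemes on which the stabilizer is a constant proper subgroup $H \subsetneq D$; such stratifications exist for diagonalizable group actions by Thomason's generic slice arguments, and since $U$ contains no $D$-fixed point every stabilizer that occurs is proper. Because localization is exact and the localization sequence builds the total space from its strata, it suffices to show $S^{-1}\K_0(W,D) = 0$ for a single stratum $W$ with constant stabilizer $H \subsetneq D$. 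On such $W$ the subgroup $H$ acts trivially and $D/H$ acts freely, whence $\K_0(W,D) \cong \K_0\!\left(W/(D/H)\right) \otimes_{\Z} R(H)$; choosing a character $\chi$ of $D$ that is nontrivial but restricts trivially to $H$ — possible precisely because $D/H$ is a nontrivial diagonalizable group — the element $1-\chi^{-1} \in S$ acts on this module as $1 - [\bar L^\vee]$ for a line bundle $\bar L$ on $W/(D/H)$, hence nilpotently. An element of $S$ acting nilpotently on a module forces its localization to vanish, so $S^{-1}\K_0(W,D)=0$, completing the induction. The technical heart of the argument, and the step I expect to be most delicate, is exactly the construction of the invariant stratification by constant stabilizers together with the identification of the $R(D)$-module structure on each stratum, since this is where the special feature of diagonalizable groups — that every proper quotient still carries a nontrivial character — is used to produce an annihilating element of $S$.
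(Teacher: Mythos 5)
The paper gives no proof of this proposition at all --- it is quoted directly from Nielsen's Theorem 3.2 --- and what you have written is precisely a reconstruction of the classical argument behind it (Nielsen's, in the form later generalized by Thomason): split injectivity from the self-intersection formula together with invertibility of the $\K$-theoretic Euler class (a unit in $S^{-1}R(D)$ plus a nilpotent), and surjectivity by reducing, via the localization sequence and a generic-slice stratification by constant stabilizers, to the vanishing of the localized theory on the complement of the fixed locus. Your strategy is correct and is essentially the standard proof; the one point to phrase more carefully is that the strata $W$ and the quotient $W/(D/H)$ need not be smooth, so those steps live in $\GG_0$ (as in Proposition \ref{prop.longexact}) rather than $\K_0$, with the nilpotence of $1-[\bar L^\vee]$ acting on $\GG_0$ coming from the dimension-of-support filtration rather than from smoothness of the base.
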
 


We also use the projective bundle theorem for equivariant $\K$-theory extensively.

\begin{prop}\label{prop.projectivebundle}
  Let $X$ be a regular scheme and let $V$ be a $G$-equivariant vector
  bundle on $X$. Then there is an isomorphism of Grothendieck rings
  $\K_0(\Pro(V),G) = \frac{\K_0(X,G)[t]}{ \lambda_{-1}(V \otimes t)^*}$
  where $t$ is the class
  of ${\mathcal O}_{\Pro(V)}(1)$.
\end{prop}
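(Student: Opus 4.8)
The plan is to exhibit $\K_0(\Pro(V),G)$ as a free $\K_0(X,G)$-module of rank $n=\rank V$ on the powers of $t=[\O_{\Pro(V)}(1)]$, and to identify the unique relation among those powers with the stated Koszul class. Write $\pi\colon\Pro(V)\to X$ for the structure map. First I would produce the relation. The tautological surjection $\pi^*V\twoheadrightarrow\O_{\Pro(V)}(1)$ is the same datum as a nowhere-vanishing global section $s$ of $\pi^*V^\vee\otimes\O_{\Pro(V)}(1)$, so the Koszul complex on $s$ is a $G$-equivariant locally free resolution of $0$. In $\K_0(\Pro(V),G)$ this gives
\[\lambda_{-1}\bigl((\pi^*V^\vee\otimes\O_{\Pro(V)}(1))^\vee\bigr)=\sum_{i=0}^{n}(-1)^i[\wedge^i(\pi^*V\otimes\O_{\Pro(V)}(-1))]=0.\]
Since $\pi^*$ is a ring map and $[\O_{\Pro(V)}(1)]$ is a unit, multiplying by the invertible factor $[\det V]\,t^{-n}$ and using $\wedge^{n-k}V\cong\wedge^kV^\vee\otimes\det V$ rewrites this, up to a unit, as $\lambda_{-1}(V\otimes t)^{*}=0$. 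Consequently there is a well-defined $\K_0(X,G)$-algebra homomorphism
\[\phi\colon\frac{\K_0(X,G)[t]}{(\lambda_{-1}(V\otimes t)^{*})}\longrightarrow\K_0(\Pro(V),G),\qquad t\longmapsto[\O_{\Pro(V)}(1)].\]

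The polynomial $\lambda_{-1}(V\otimes t)^{*}=\sum_{i=0}^{n}(-1)^i[\wedge^iV^\vee]\,t^{\,i}$ has leading coefficient $(-1)^n[\det V^\vee]$, a unit, so its ideal is monic up to a unit and the source of $\phi$ is free over $\K_0(X,G)$ on $1,t,\dots,t^{n-1}$, with $\phi(t^i)=[\O_{\Pro(V)}(i)]$. It therefore suffices to prove that the $[\O_{\Pro(V)}(i)]$, for $0\le i\le n-1$, form a $\K_0(X,G)$-basis of the target. For linear \emph{independence} I would use the equivariant derived pushforward: as $\pi$ is smooth and proper and $X$ is regular, the sheaves $R^q\pi_*\O_{\Pro(V)}(l)$ are $G$-equivariant bundles computed fiberwise, with $\pi_*\O_{\Pro(V)}(l)=\Sym^l V$ for $l\ge0$ and $R^q\pi_*\O_{\Pro(V)}(l)=0$ for all $q$ when $-n<l<0$. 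Pairing $[\O_{\Pro(V)}(i)]$ against the functionals $\beta\mapsto\pi_*\bigl([\O_{\Pro(V)}(-j)]\cdot\beta\bigr)$, the resulting $n\times n$ matrix $M_{ji}=\pi_*[\O_{\Pro(V)}(i-j)]$ over $\K_0(X,G)$ is unipotent triangular (ones on the diagonal, the vanishing range below it), hence invertible; so the $[\O_{\Pro(V)}(i)]$ are independent and $\phi$ is injective.

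The remaining, and substantive, point is \emph{generation}: that the $[\O_{\Pro(V)}(i)]$, $0\le i\le n-1$, span $\K_0(\Pro(V),G)$ as a $\K_0(X,G)$-module, equivalently that $\phi$ is surjective. Here I would invoke the relative Beilinson resolution, which expresses every coherent sheaf on $\Pro(V)$ through the terms $\pi^*(R^q\pi_*(\mathcal F(i)))\otimes\O_{\Pro(V)}(-i)$; being built from the functors $\pi^*$, $\otimes\,\O_{\Pro(V)}(i)$, and $R\pi_*$, this resolution is canonical and hence $G$-equivariant, so it produces the required generating relation at the level of $\K_0$. This is the main obstacle, since independence and the relation are formal once the pushforward computation is in hand, whereas generation demands that the nonequivariant dévissage of Quillen be carried through equivariantly together with the cohomology-and-base-change identities for $R\pi_*\O_{\Pro(V)}(l)$ as $G$-bundles on the regular base $X$. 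As an alternative one could base change along Proposition~\ref{prop.changegroup} to a split maximal torus and descend by faithful flatness of $R(T)$ over $R(G)$; but since $V$ need not split $T$-equivariantly, this does not trivialize the generation step, so I would prefer the direct argument above, valid for arbitrary $G$.
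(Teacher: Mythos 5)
There is nothing on the paper's side to compare against: Proposition \ref{prop.projectivebundle} sits in the background section and is stated without proof, as a known result (it is the equivariant projective bundle theorem going back to Thomason \cite{Tho:87}; see also Merkurjev's survey \cite{Mer:05}). Your proposal is essentially the standard proof of that known result, and it is correct. The Koszul complex of the tautological nowhere-vanishing section of $\pi^*V^\vee\otimes\O_{\Pro(V)}(1)$ does give the relation, and your ``up to a unit'' reduction is right, though the invertible factor is $(-1)^n[\det V^\vee]\,t^{n}$ rather than $[\det V]\,t^{-n}$; the unit leading coefficient $(-1)^n[\det V^\vee]$ makes the source of $\phi$ free on $1,t,\dots,t^{n-1}$; and the pairing against the functionals $\beta\mapsto\pi_*\bigl([\O_{\Pro(V)}(-j)]\cdot\beta\bigr)$ is legitimate because $\pi$ is proper and equivariant, $\Pro(V)$ is regular when $X$ is, $R^q\pi_*$ preserves equivariance, and the projection formula makes these functionals $\K_0(X,G)$-linear, so triangularity of the matrix $\bigl(\pi_*[\O_{\Pro(V)}(i-j)]\bigr)$ gives injectivity. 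For surjectivity, the relative Beilinson resolution is indeed the crux, and your equivariance argument is the right one: the resolution of the relative diagonal in $\Pro(V)\times_X\Pro(V)$ is the Koszul complex of the canonical section of $p_1^*\O(1)\otimes p_2^*T_{\pi}(-1)$, which is $G$-invariant, so all terms and differentials are canonically equivariant. Two small repairs are needed there: the terms of the induced resolution are $\pi^*R^q\pi_*\bigl(\mathcal{F}\otimes\Omega^i_{\pi}(i)\bigr)\otimes\O_{\Pro(V)}(-i)$ (your formula omits the $\Omega^i_{\pi}(i)$-twist), and converting the outputs $R\pi_*\bigl(\mathcal{F}\otimes\Omega^i_{\pi}(i)\bigr)$ into classes in $\K_0(X,G)$ uses $\GG_0(X,G)=\K_0(X,G)$, so the regularity of $X$ enters in this step as well, not only in the independence step. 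With those adjustments your argument is complete and, unlike the paper, self-contained; your closing remark that reduction to a maximal torus via Proposition \ref{prop.changegroup} would not help is also correct, both because $V$ need not split $T$-equivariantly and because that proposition requires hypotheses on $G$ that the present statement does not impose.
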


Finally, if $G$ acts properly on a smooth scheme $X$
and we work over $\C$, we have the following
Riemann-Roch isomorphism \cite{EdGr:05, Edi:13}.
\begin{thm} \label{thm.rr}
  There is a Riemann-Roch isomorphism $\tau_X \colon \K_0(X,G)_\C \to
  \Chow^*(I[X/G])_\C$. In particular $\K_0(X,G)_\Q$ and $\Chow^*(I[X/G])_\Q$
  have the same rank.
  (Here $I[X/G]$ is the inertia stack of the quotient stack.)
\end{thm}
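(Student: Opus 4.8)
The statement is the Riemann-Roch theorem of \cite{EdGr:05, Edi:13}, so I will only outline the strategy by which one constructs the isomorphism $\tau_X$. The overall plan is to build $\tau_X$ out of the ordinary (non-stacky) Chern character together with a ``decomposition over the inertia'' of equivariant $\K$-theory. The naive equivariant Chern character $\K_0(X,G)_\C \to \Chow^*([X/G])_\C$ cannot be an isomorphism, since the source carries the representation-theoretic information of the stabilizers recorded by $R(G)$, while the target does not; the role of the inertia stack $I[X/G]$ is precisely to accommodate this extra data through its twisted sectors.

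First I would localize the source over the spectrum of $R(G)_\C$. The ring $R(G)_\C = \K_0(\Spec \C, G)_\C$ has maximal ideals indexed by semisimple conjugacy classes $[\psi]$ in $G$, and $\K_0(X,G)_\C$ is a module over it. Because the action of $G$ on $X$ is proper, only finitely many classes $[\psi]$ occur as semisimple parts of automorphisms of points of $[X/G]$, and one obtains a finite decomposition
\[
\K_0(X,G)_\C \;\cong\; \bigoplus_{[\psi]} \K_0(X,G)_{\mathfrak m_\psi},
\]
where $\mathfrak m_\psi \subset R(G)_\C$ is the maximal ideal attached to $[\psi]$. This is the algebraic analogue of decomposing by support in $\Spec R(G)_\C$.

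Next I would identify each localized summand with the $\K$-theory of a fixed locus. The key input is the nonabelian localization theorem of \cite{EdGr:05}, which generalizes the diagonalizable-group localization of Proposition \ref{prop.localization}: for each $[\psi]$ the pushforward along $X^\psi \hookrightarrow X$ induces an isomorphism
\[
\K_0(X^\psi, Z_G(\psi))_{\mathfrak m_\psi} \;\xrightarrow{\ \sim\ }\; \K_0(X,G)_{\mathfrak m_\psi},
\]
with inverse given by multiplication by the inverse of the $\K$-theoretic Euler class of the conormal bundle, exactly as in Propositions \ref{prop.selfintersection} and \ref{prop.localization}. On $X^\psi$ the element $\psi$ acts trivially, so after a further shift to the identity the summand $\K_0(X^\psi, Z_G(\psi))_{\mathfrak m_\psi}$ is computed by the ordinary equivariant Chern character (corrected by the Todd class, which is a unit and hence harmless for invertibility), giving an isomorphism onto $\Chow^*([X^\psi/Z_G(\psi)])_\C$.

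Finally I would assemble these pieces. The inertia stack decomposes as
\[
I[X/G] \;=\; \coprod_{[\psi]} [X^\psi / Z_G(\psi)],
\]
so summing the component isomorphisms over $[\psi]$ yields $\tau_X \colon \K_0(X,G)_\C \to \Chow^*(I[X/G])_\C$, and in particular the equality of ranks after $\otimes \Q$. The main obstacle is the nonabelian localization step: one must upgrade the diagonalizable-group localization theorem to an arbitrary reductive $G$, control the completions in which the Chern character a priori takes values (these become harmless exactly because properness of the action makes all supports proper), and verify the bookkeeping matching the twisted sectors $[X^\psi/Z_G(\psi)]$ with the $R(G)_\C$-localizations at $\mathfrak m_\psi$ compatibly with the Euler-class normalizations.
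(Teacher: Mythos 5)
The paper never proves this statement: it is quoted verbatim as background (Theorem \ref{thm.rr}) from \cite{EdGr:05} and \cite{Edi:13}, so there is no internal argument to compare yours against, and deferring to those references—as you do—is exactly what the paper does. Your outline is moreover an accurate summary of how the proof goes in those references: decomposition of $\K_0(X,G)_\C$ over the finitely many semisimple conjugacy classes in its support in $\Spec R(G)_\C$ (finiteness coming from properness of the action), the nonabelian localization theorem identifying each localized summand with a twist of $\K_0(X^\psi, Z_G(\psi))_{\mathfrak{m}_\psi}$ via pushforward with Euler-class inverse, the equivariant Chern character (with Todd correction, and with completion issues vanishing because the maximal ideal acts nilpotently on each summand for proper actions) on each sector, and assembly over $I[X/G] = \coprod_{[\psi]} [X^\psi/Z_G(\psi)]$.
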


\section{Torus equivariant $\K$-theory of projective spaces of binary forms}
In this section, we prove some results and propositions essential to the final computations.

Let $V$ be the defining representation of $\GL_2$ of dimension $2$ and $\mathcal{D} : \GL_2 \to \G_m$ its determinant. Following the notation used in
\cite{Lar:21} set
\[V_N = \Sym^N V^*,\]
and let $\Pro^N = \Pro(V_N)$. We denote by $T_2 = \G_m \times \G_m$ the maximal torus of $\GL_2$ with induced action on $\Pro^1 = \Pro(V^*)$ given by
\[t \cdot (x_0,x_1) = (\chi_1^{-1}(t) x_0 : \chi_2^{-1}(t) x_1).\]
The induced $T_2$-action on $\Pro^N$ is given by 
\[t \cdot (X_0 : \cdots : X_i : \cdots : X_N) = \left(\chi_1^{-N}(t)X_0 : \cdots : \chi_1^{-N + i}(t)\chi_2^{-i}(t)X_i : \cdots : \chi_2^{-N}(t)X_N\right)\]
where we denote by $X_i$ the coefficient of the monomial $x_0^{N - i}x_1^i$. 


With the above convention, we are considering 
\[\Pro^N = \Proj k[x_0,\cdots,x_N]\]
where $x_0,\cdots,x_N$ correspond to global sections of
$\O_{\Pro^N}(1)$. As a consequence, the action of $T_2$ on global sections
of $\mathcal{O}(1)$ is dual to the action on the coordinates of $\Pro^N$.

\subsection{$T_2$-equivariant class $[\O_H]_{T_2}$ of an invariant hypersurface $H$}\label{subsec.hypersurface}

Let $V$ be a finite dimensional representation of a split torus $T$ over $k$. Let $H$ be a $T$-invariant hypersurface of $\Pro(V)$ defined by a homogeneous form $f \in \Sym^d V^*$ with $\mu \cdot f = \chi^{-1}(\mu) f$ for some character $\chi : T \to \G_m$ of $T$. We denote by $t = [\mathcal{O}_{\Pro(V)}(1)]$ the $T$-equivariant $\K$-theoretic hyperplane class of $\Pro(V)$. By the short exact sequence from the closed embedding $H \hookrightarrow \Pro(V)$,
\[0 \to \mathcal{O}_{\chi^{-1}}(-d) \to \mathcal{O}_{\Pro(V)} \to \mathcal{O}_H \to 0\]
we obtain the $T$-equivariant $\K$-theoretic class of $H$ as below
\begin{align*}
[\mathcal{O}_H]_{T} &= [\mathcal{O}_{\Pro(V)}] - [\mathcal{O}_{\chi^{-1}}(-d)] \\
&= 1 -  \chi^{-1}t^{-d}.
\end{align*}


\begin{eg}\label{eg.prod}
  Now let $V$ be the standard $2$-dimensional representation of $T_2$
  with decomposition $V = L_1 \oplus L_2$ into $\chi_1$ and $\chi_2$-eigenspaces. Consider $\Pro^2 = \Pro(\Sym^2 V^*)$ where $T_2$ acts with
  weights $\chi_1^{-2}, \chi_1^{-1}\chi_2^{-1}, \chi_2^{-2}$ respectively.
  The fixed locus of $T_2$ in $\Pro^2$ consists of
  the three points $Q_1 = (1:0:0)$, $Q_2 = (0:1:0)$ and $Q_3 = (0:0:1)$. Each fixed point is defined by the complete
  intersection of two hyperplanes and we can compute their $\K$-theoretic
  classes using the formula for the $\K$-theoretic class of a hyperplane.
For example   
\[Q_1 = H_{\chi_1\chi_2} \cap H_{\chi_2^2},\]
and we obtain
\[[Q_1]_{T_2} = (1 - \chi_1\chi_2t^{-1})(1 - \chi_2^2t^{-1})\]
where $t = [\mathcal{O}_{\Pro^2}(1)]$ denotes the hyperplane class.
Likewise,
\[[Q_2]_{T_2} = (1 - \chi_1^2t^{-1})(1-\chi_2^2 t^{-1}), \quad [Q_3]_{T_2} = (1-\chi_1^2t^{-1})(1-\chi_1\chi_2t^{-1}).\]
\end{eg}


\subsection{Pushforward along the multiplication map for binary
forms}\label{subsec.pushforward}
Let $V$ be the standard two-dimensional representation of $\GL_2$ and
consider the map $\Sym^r V^* \times \Sym^{N-2r}V^* \to \Sym^N V^*$
which sends $(f,g) \mapsto f^2g$.
There is an induced map of projective spaces
$$\pi_r \colon \Pro^r \times \Pro^{N-2r} \longrightarrow \Pro^{N}.$$ 
We describe how to use
the localization theorem in equivariant $\K$-theory to describe
the image of the pushforward
$$(\pi_r)_* \colon \K_0(\Pro^r\times \Pro^{N-2r}, T_2) \to \K_0(\Pro^N,T_2)$$
where $T_2 \subset \GL_2$ is the maximal torus of diagonal matrices.

By the projective bundle theorem Proposition \ref{prop.projectivebundle},
$T$-equivariant $\K$-theory of $\Pro^s$ can be identified as 
$$\K_0(\Pro^s,T_2) = \dfrac{R(T)[\mathcal{O}_{\Pro^s}(1)]}
  {\left(\prod_{k = 0}^s (1 - a^{s-k}b^k \mathcal{O}_{\Pro^s}(1)^{-1})\right)},$$
where $a,b$ denote the $\K$-theoretic classes of the characters of
the maximal torus $T_2$.
Let $x_1 = [\O_{\Pro^r}(1)]$, $x_2 = [\O_{\Pro^{N - 2r}}(1)]$ and $t =
[\O_{\Pro^N}(1)]$. Since
$\pi_r^*(t) = x_1^2x_2$
the pushforward $(\pi_r)_*(\K_0(\Pro^r \times \Pro^{N - 2r},T_2))$ is generated
as an $\K_0(\Pro^N,T_2)$-module
by the classes
$$(\pi_r)_*(1),(\pi_r)_*(x_1),\cdots,(\pi_r)_*(x_1^r).$$


Consider the following commutative diagram of projective spaces and their fixed loci.
\[\begin{tikzcd}
(\Pro^r \times \Pro^{N - 2r})^{T_2} \ar[r,"i",hook] \ar[d,"\pi_r^{T_2}"] & \Pro^r \times \Pro^{N - 2r} \ar[d,"\pi_r"] \\
(\Pro^N)^{T_2} \ar[r,"j",hook] & \Pro^N
\end{tikzcd}\]
By localization theorem Proposition \ref{prop.localization}, the induced pushforwards $i_*$ and $j_*$ are both isomorphisms after localizing at the multiplicative set $S$ consisting of the 
Euler classes of the non-zero characters of $T_2$.
Since $\K_0(\Pro^n,T_2)$ is a free $R(T_2)$-module we can 
compute the pushforwards of the line elements $x_1^k$
by computing $j_*(\pi_r^{T_2})_*\alpha_k$ where $\alpha_k \in S^{-1}\K_0\left((
\Pro^r \times \Pro^{N-2r})^{T_2}\right)$ satisfies $x_1^k = i_* \alpha_k$.

By the self-intersection formula,
\begin{eqnarray*}
  \alpha_k & = & \dfrac{i^*x_1^k}{\lambda_{-1}\left(N^\vee_{(\Pro^r \times \Pro^{N - 2r})^T} \Pro^r \times \Pro^{N - 2r}\right)}\\
  & = & \dfrac{[\O(k,0)\mid_{(\Pro^r \times \Pro^{N - 2r})^T}]}{\lambda_{-1}\left(N^\vee_{(\Pro^r \times \Pro^{N - 2r})^T} \Pro^r \times \Pro^{N - 2r}\right)}.
\end{eqnarray*}
The fixed-point locus of $\Pro^r \times \Pro^{N -2r}$ is a finite set consisting of the points
\[P_{i,j} = ((0 : \cdots : 1 : \cdots : 0),(0 : \cdots : 1 : \cdots : 0))\]
where the only nonzero entry is located on the $i$-th component for the first factor and on the $j$-th component for the second factor of the product of projective spaces $\Pro^r \times \Pro^{N - 2r}$.
The normal bundle at $P_{i,j}$ can be directly computed as
\begin{align*}
N_{P_{i,j}} \left(\Pro^r \times \Pro^{N - 2r}\right) &= \Hom\left(L_i,k^{r + 1}/L_i\right) \oplus \Hom\left(L_j,k^{N - 2r + 1}/L_j\right) \\
&= \left(\bigoplus_{l \ne i}L_i^\vee \otimes L_l\right) \oplus \left(\bigoplus_{l \ne j} L_j^\vee \otimes L_l\right)
\end{align*}
where $L_l$ is the $l$-th coordinate line in the representation
$V_r$. We can compute its $\K$-theoretic Euler class as
\[\lambda_{-1}\left(N^\vee_{P_{i,j}}\left(\Pro^r \times \Pro^{N-2r}\right)\right) = \left[\prod_{k \neq i} \left(1 - \dfrac{a^{r - k}b^k}{a^{r - i}b^i}\right)\right] \left[\prod_{k \ne j}\left(1 - \dfrac{a^{N - 2r - k}b^k}{a^{N - 2r - j}b^j}\right)\right]\]
and thus in localized $\K$-theory
\[x_1^k = i_*\left( \sum_{(i,j)}\dfrac{(a^{r - i}b^i)^k}{\left[\prod_{k \neq i} \left(1 - \dfrac{a^{r - k}b^k}{a^{r - i}b^i}\right)\right] \left[\prod_{k \ne j}\left(1 - \dfrac{a^{N - 2r - k}b^k}{a^{N - 2r - j}b^j}\right)\right]}\right).\]

For the restriction map $\pi_r^{T_2}$ between finite fixed-point loci, it is easy to see that
\[\pi_r^{T_2}(P_{i,j}) = Q_{2i+j} \in (\Pro^N)^{T_2},\]
where $Q_{2i + j}$ denotes the point in $\Pro^N$ with the only nonzero entry
located in the $(2i + j)$-th coordinate.
Since the equivariant $\K$-theory of the finite set $(\Pro^N)^{T_2}$ is generated by the classes $[Q_k]_{T_2}$ as an $R(T_2)$-module, the pushforward $j_*$ on the level of Grothendieck rings is determined by the image of generators $[Q_k]_{T_2}$, which is
\[j_*([Q_k]_{T_2}) = \prod_{i \neq k}\left(1 - a^{N - i}b^it^{-1}\right)\]
where $t = [\O_{\Pro^N}(1)]$ is the hyperplane class. Therefore we can explicitly compute the pushforward of $1,x_1,\cdots,x_1^r$ along the multiplication map $\pi_r$. 

\begin{eg}\label{eg.Veronese}
Consider the the simplest example where $N=2$ and $r=1$. In this case we are considering
the Veronese map $\pi_1 : \Pro^1 \to \Pro^2$ sending $(x_0 : x_1)$ to $(x_0^2 : x_0x_1 : x_1^2)$. From Section \ref{subsec.pushforward}, $\K_0(\Pro^1,T_2)$ is generated additively by $1$ and $x = [\mathcal{O}_{\Pro^1}(1)]$ as an $R(T_2)$-module. We want to find the pushforwards $(\pi_1)_*(1),(\pi_1)_*(x)$ in $\K_0(\Pro^2,T_2)$. 

In this case we have the following commutative diagram
\begin{equation}\label{diag.veronese}
\begin{tikzcd}
(\Pro^1)^{T_2} \ar[r,"i",hook] \ar[d,"\pi_1^{T_2}"] & \Pro^1 \ar[d,"\pi_1"] \\
(\Pro^2)^{T_2} \ar[r,"j",hook] & \Pro^2
\end{tikzcd}
\end{equation} 
where we denote by $(\Pro^1)^{T_2} = \{P_0 = (0:1),P_\infty = (1:0)\}$ and $(\Pro^2)^{T_2} = \{Q_1 = (1:0:0),Q_2 = (0:1:0),Q_3 = (0:0:1)\}$.
By the self-intersection formula, Proposition \ref{prop.selfintersection}, 
we know that for any class $E \in \K_0(\Pro^1,T_2)$, $i_*i^*E$ and $E$ differ by the Koszul factor 
which is given by the $\K$-theoretic Euler class of the conormal bundle of the fixed locus $(\Pro^1)^{T_2}$ in $\Pro^1$.
Since
\[[N_{P_0}\Pro^1] = a^{-1}/b^{-1},\quad [N_{P_\infty}\Pro^1] = b^{-1}/a^{-1},\]
we have
\begin{align*}
& i^*[\O_{\Pro^1}] = \left(\dfrac{[\O]}{1 - a/b},\dfrac{[\O]}{1 - b/a}\right) = \left(\dfrac{b}{b - a},\dfrac{-a}{b - a}\right), \\
& i^*[\O_{\Pro^1}(1)] = \left(\dfrac{[\O(1)]}{1 - a/b},\dfrac{[\O(1)]}{1 - b/a}\right) = \left(\dfrac{b^2}{b - a},\dfrac{-a^2}{b - a}\right).
\end{align*}

Notice that along the restriction map $\pi_1^{T_2}$ on the fixed loci, we have $\pi_1^{T_2}(P_0) = Q_3$ and $\pi_1^{T_2}(P_\infty) = Q_1$. In Example \ref{eg.prod} from Section \ref{subsec.hypersurface} we computed that 
\begin{align*}
& [Q_1]_{T_2} = (1 - abt^{-1})(1 - b^2t^{-1}), \\
& [Q_3]_{T_2} = (1 - a^2t^{-1})(1 - abt^{-1}),
\end{align*}
where $t = [\O_{\Pro^2}(1)]$. Using the localization theorem
we deduce from the commutative diagram \eqref{diag.veronese} that
\begin{align*}
(\pi_1)_*(1) &= \dfrac{b}{b - a}[Q_3]_{T_2} - \dfrac{a}{b - a}[Q_1]_{T_2} \\
&= (1 - abt^{-1})(1 + abt^{-1}),
\end{align*}
and
\begin{align*}
(\pi_1)_*(x) &= \dfrac{b^2}{b - a}[Q_3]_{T_2} - \dfrac{a^2}{b - a}[Q_1]_{T_2} \\
&= (a + b)(1 - abt^{-1}).
\end{align*}
 \end{eg}

\section{Proof of Theorem \ref{thm-M2}. The Grothendieck ring of the moduli stack $\M_2$}

First recall the result from \cite{Vis:98} that the moduli stack $\M_2$ can be described as a global quotient stack of an open subset of a representation of $\GL_2$. Let $\Aff(1,6)$ be the affine space of binary forms of degree $6$ and $\Delta' \subset \Aff(1,6)$ its discriminant locus, consisting of forms of multiple roots.  

\begin{prop}\cite[Proposition 3.1]{Vis:98}
Denote by $\Aff_{sm}(1,6) \coloneq \Aff(1,6)\setminus\Delta'$ the open subset of $\Aff(1,6) \cong \Aff^7$ consisting of smooth nonzero binary forms of degree $6$ with distinct roots. Then $\M_2$ is canonically isomorphic to the quotient $[\Aff_{sm}(1,6)/\GL_2]$ with action of $\GL_2$ on $\Aff(1,6)$ given by $A \cdot f(x) = \det(A)^2 f(A^{-1}x)$ for $A \in \GL_2$.
\end{prop}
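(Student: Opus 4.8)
The plan is to construct an explicit equivalence of the categories fibered in groupoids $\M_2$ and $[\Aff_{sm}(1,6)/\GL_2]$ over $k$-schemes, starting from the classical fact that every smooth genus two curve is a double cover of a projective line branched at six distinct points. Given a family of smooth genus two curves $\pi\colon\mathcal C\to S$, I would first form the Hodge bundle $\E=\pi_*\omega_{\mathcal C/S}$, which is locally free of rank two by cohomology and base change since $h^0(\omega_{C_s})=2$ is constant. The relative dualizing sheaf is relatively base-point-free of fiberwise degree $2g-2=2$, so the relative canonical morphism realizes $\mathcal C$ as a finite flat double cover $p\colon\mathcal C\to\Pro(\E)$ of the associated $\Pro^1$-bundle, normalized so that $p^*\O_{\Pro(\E)}(1)=\omega_{\mathcal C/S}$. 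Relative Riemann–Hurwitz shows that the branch divisor $B\subset\Pro(\E)$ is finite of degree $2g+2=6$ over $S$, and it is étale over $S$ precisely because every fiber is smooth, so on each fiber $B$ consists of six distinct points.

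The heart of the argument, and the step I expect to require the most care, is to identify the line bundle containing the branch section, since this is exactly what produces the twist by $\det(A)^2$ in the statement. Writing the cover as $\mathcal C=\Spec_{\Pro(\E)}(\O\oplus L^{-1})$ with algebra structure given by a section $s\in H^0(\Pro(\E),L^{\otimes2})$ vanishing on $B$, the fiberwise degree of $B$ forces $L=\O_{\Pro(\E)}(3)\otimes\pi^*N$ for a line bundle $N$ on $S$. The double-cover relation $\omega_{\mathcal C/\Pro(\E)}=p^*L$ combined with $\omega_{\Pro(\E)/S}=\O(-2)\otimes\pi^*\det\E$ yields $\omega_{\mathcal C/S}=p^*(\O(1)\otimes\pi^*(N\otimes\det\E))$; comparing with the normalization $p^*\O(1)=\omega_{\mathcal C/S}$ (equivalently, pushing forward and using $\E=\pi_*\omega_{\mathcal C/S}$) forces $N\cong(\det\E)^{-1}$. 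Consequently $s$ is canonically a section of $\pi_*L^{\otimes2}\cong\Sym^6\E\otimes(\det\E)^{-2}$, which is precisely the bundle associated, through the frame $\GL_2$-torsor of $\E$, to the representation $\Sym^6V^*\otimes(\det V)^{\otimes2}$ once the standard representation is identified with $\E^\vee$. A choice of frame therefore identifies $s$ with an honest binary sextic of distinct roots in $\Aff_{sm}(1,6)$, and a change of frame by $A\in\GL_2$ transforms it by $\det(A)^2 f(A^{-1}x)$, matching the action in the statement. Recording the pair $(\E,s)$ as a $\GL_2$-torsor equipped with a $\GL_2$-equivariant map to $\Aff_{sm}(1,6)$ is by definition an $S$-point of $[\Aff_{sm}(1,6)/\GL_2]$, and since every step commutes with base change this defines a morphism $\M_2\to[\Aff_{sm}(1,6)/\GL_2]$.

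To build the quasi-inverse I would reverse the construction: from a rank two bundle $\E$ on $S$ and a section $f$ of $\Sym^6\E\otimes(\det\E)^{-2}$ whose fiberwise zero scheme is six distinct points, set $L=\O_{\Pro(\E)}(3)\otimes\pi^*(\det\E)^{-1}$ and form the finite flat double cover $\Spec_{\Pro(\E)}(\O\oplus L^{-1})$ branched along the zero scheme of the corresponding section of $L^{\otimes2}$; the étaleness of this branch divisor, guaranteed by $f\in\Aff_{sm}(1,6)$, makes it a smooth family of genus two curves, and the two assignments are mutually inverse on objects by construction. The remaining and most delicate point is to promote this to an equivalence of stacks: one must check compatibility with arbitrary base change and, above all, that isomorphisms of families of curves correspond bijectively to isomorphisms of the associated torsor-with-equivariant-map data. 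A reassuring consistency check is that a scalar matrix $tI\in\GL_2$ acts on $\Aff_{sm}(1,6)$ by $t^{-2}$, so that $\{\pm I\}\cong\mu_2$ acts trivially; this matches the hyperelliptic involution being a nontrivial automorphism of every genus two curve, and reproduces the generic stabilizer $\mu_2$ on both sides.
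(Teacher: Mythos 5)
The paper does not prove this proposition at all: it is quoted directly from Vistoli \cite{Vis:98}, so the only meaningful comparison is with Vistoli's original argument, and your proposal is essentially a reconstruction of it (relative canonical map exhibiting the family as a double cover of $\Pro(\E)$, identification of the branch datum, translation into torsor language). Your weight bookkeeping is correct: with $\Pro(\E)=\Proj(\Sym\E)$ one has $\omega_{\Pro(\E)/S}=\O_{\Pro(\E)}(-2)\otimes\pi^*\det\E$, which forces $N\cong(\det\E)^{-1}$ and hence $s\in\Gamma\bigl(\Sym^6\E\otimes(\det\E)^{-2}\bigr)$, and this is exactly the bundle associated to the representation $\Sym^6V^*\otimes(\det V)^{\otimes 2}$, \ie to $\Aff(1,6)$ with the stated action. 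Two points should be tightened. First, the relevant torsor is the frame torsor of $\E^\vee$, not of $\E$: frames of $\E$ associate $\E$ to $V$, hence associate $\Sym^6\E^\vee\otimes(\det\E)^{2}$ to $\Aff(1,6)$, and this differs from $\Sym^6\E\otimes(\det\E)^{-2}$ by $(\det\E)^{2}$; your parenthetical ``once the standard representation is identified with $\E^\vee$'' is the correct formulation, so keep that and drop ``frame torsor of $\E$''. Second, the step you defer --- that isomorphisms of families of curves biject with isomorphisms of pairs $(\E,s)$ --- is where the stack structure is actually identified; it follows from the fact that $\Aut(C)$ acts faithfully on $H^0(\omega_C)$ for a smooth genus-two curve in characteristic $\neq 2$ (the hyperelliptic involution acts by $-1$, and modulo it $\Aut(C)$ embeds into $\PGL_2$ acting faithfully on $\Pro^1$), so an isomorphism of pairs determines a unique isomorphism of the double covers and conversely. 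With those two repairs your outline is a correct proof along the same lines as the cited source.
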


Since $\Aff_{sm}(1,6)$ is an open subset of a $\GL_2$-representation,
the equivariant Grothendieck ring $\K_0(\Aff_{sm}(1,6),\GL_2)$ is a quotient
of the representation ring of $\GL_2$ subject to some relations.
To establish notation we identify the representation
ring $R(\GL_2) = \Z[e_1, e_2^{\pm 1}]$ where $e_1$ is the class of defining representation $V$,
and $e_2 = [\det V]$. Note that the class of dual representation
$V^*$ equals $e_1 \otimes e_2^{-1}$.

Following
the strategy used by Vistoli to compute the Chow ring
of $\M_2$ we first pass to the projectivization to eliminate the global scaling.
Let $\Delta$ be the projectivization of the discriminant locus $\Delta'$
in $\Pro(\Aff(1,6)) \cong \Pro^6$.
The quotient map $\Aff_{sm}(1,6) \to \Pro^6 \setminus \Delta$ induces
an 
epimorphism of Grothendieck rings
\[\K_0(\Pro^6 \setminus \Delta,\GL_2) \longrightarrow \K_0(\Aff_{sm}(1,6),\GL_2)\]
with kernel generated by the $\K$-theoretic Euler class of
line bundle ${\mathcal O}(-1) \otimes \mathcal{D}^{\otimes 2}$. 
If we denote by $t$ the class of ${\mathcal O}(1)$ in $\K_0(\Pro^6, \GL_2)$ then 
\[\K_0(\M_2) = \dfrac{\K_0(\Pro^6 \setminus \Delta,\GL_2)}{(1 - te_2^{-2})}.\]

By the localization sequence 
\[\GG_0(\Delta,\GL_2) \to \K_0(\Pro^6,\GL_2) \to \K_0(\Pro^6 \setminus \Delta,\GL_2) \to 0\]
it suffices to find the ideal in $\K_0(\Pro^6, \GL_2)$ generated by
the image of $\GG_0(\Delta, \GL_2)$. Following
the strategy used in \cite{EdFu:09} we use the fact
that $\K_0(\Pro^6, \GL_2)$ is a summand in $\K_0(\Pro^6, T_2)$
and first compute the ideal generated by the image of $\GG_0(\Delta,T_2)$
in $\K_0(\Pro^6, T_2)$.
Here we denote by $T_2$ the maximal torus in $\GL_2$ of diagonal matrices.

\subsection{Computation of the image of $\GG_0(\Delta, T_2)$ in $\K_0(\Pro^6, T_2)$} \label{sec.chowenv}
Denote by $\Delta = \Delta_1,\Delta_2,\Delta_3$ the closed subsets of $\Pro^6$ corresponding to sextic binary forms divisible by the square of a form of degree $1,2$ and $3$ respectively over some extension of the base field $k$.
As observed in \cite{Vis:98}, $\Delta_r$ is the image
of the map $\pi_r \colon \Pro^r \times \Pro^{6-2r} \to \Pro^6$
given by $(f,g) \mapsto f^2g$.
\begin{lem}
  The image of $\GG_0(\Delta, T_2)$ in $\K_0(\Pro^6, T_2)$ is 
  the ideal generated by images of the pushforwards
\[(\pi_r)_* : \K_0(\Pro^r \times \Pro^{6 - 2r},T_2) \longrightarrow \K_0(\Pro^6,T_2).\]
\end{lem}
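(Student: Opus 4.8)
The plan is to prove the two inclusions separately. Write $i\colon \Delta \hookrightarrow \Pro^6$ for the closed embedding, $i_r \colon \Delta_r \hookrightarrow \Pro^6$, and let $J \subseteq \K_0(\Pro^6,T_2)$ be the ideal generated by $\bigcup_{r=1}^{3}\im(\pi_r)_*$. First observe that $\im(i_*)=i_*\GG_0(\Delta,T_2)$ is itself an ideal: the projection formula gives $\beta\cdot i_*(\alpha)=i_*(i^*\beta\cdot\alpha)$ for every $\beta\in\K_0(\Pro^6,T_2)$. Since each $\pi_r$ factors as $\Pro^r\times\Pro^{6-2r}\xrightarrow{\bar\pi_r}\Delta_r\hookrightarrow\Delta\xrightarrow{i}\Pro^6$, we have $\im(\pi_r)_*\subseteq\im(i_*)$, and because $\im(i_*)$ is an ideal this yields the easy inclusion $J\subseteq\im(i_*)$.

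For the reverse inclusion I would stratify $\Delta$ along the nested chain $\Delta=\Delta_1\supseteq\Delta_2\supseteq\Delta_3$ (set $\Delta_4:=\emptyset$), where $\Delta_r$ is the locus of forms admitting a degree-$r$ square factor, and put $U_r:=\Delta_r\setminus\Delta_{r+1}$. Writing $I_r:=(i_r)_*\GG_0(\Delta_r,T_2)$, each $I_r$ is again an ideal (projection formula), $I_1=\im(i_*)$, and $I_{r+1}\subseteq I_r$ since $i_{r+1}=i_r\circ(\Delta_{r+1}\hookrightarrow\Delta_r)$. The key geometric input is that the multiplication map becomes an isomorphism over the open stratum. \emph{Claim A: $\pi_r$ restricts to an isomorphism $\pi_r^{-1}(U_r)\xrightarrow{\ \sim\ }U_r$.} Indeed, a form $f$ lies in $U_r$ precisely when $\sum_i\lfloor m_i/2\rfloor=r$ for its root multiplicities $m_i$; then there is a unique degree-$r$ form $h$ (up to scalar) with $h^2\mid f$, namely $h=\prod_i(x-a_i)^{\lfloor m_i/2\rfloor}$, and $g=f/h^2$ is determined. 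Extracting $(h,g)$ from the coefficients of $f$ is algebraic (a gcd/resultant computation), so it furnishes a morphism inverse to $\pi_r=(h,g)\mapsto h^2g$ over $U_r$, the squaring being separable because $\Char k\neq 2$.

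Granting Claim A, I would run a downward induction on $r$ establishing $I_r\subseteq J+I_{r+1}$, which forces $I_1\subseteq J$ as $I_4=0$. Fix $\alpha\in\GG_0(\Delta_r,T_2)$. Because $\Pro^r\times\Pro^{6-2r}$ is smooth, $\GG_0=\K_0$ there, and since $\pi_r^{-1}(U_r)$ is open, the localization sequence shows that restriction from $\K_0(\Pro^r\times\Pro^{6-2r},T_2)$ onto $\GG_0(\pi_r^{-1}(U_r),T_2)$ — hence, via Claim A, onto $\GG_0(U_r,T_2)$ — is surjective. Choose generators realized as restrictions of classes $[\mathcal F_\ell]\in\K_0(\Pro^r\times\Pro^{6-2r},T_2)$ and set $\rho_\ell:=(\bar\pi_r)_*[\mathcal F_\ell]\in\GG_0(\Delta_r,T_2)$, so that $(i_r)_*\rho_\ell=(\pi_r)_*[\mathcal F_\ell]\in J$. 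By flat base change along the open immersion $U_r\hookrightarrow\Delta_r$ together with Claim A, the restriction of $\rho_\ell$ to $U_r$ is exactly the chosen generator. Thus a suitable $R(T_2)$-combination $\tilde\sigma$ of the $\rho_\ell$ agrees with $\alpha$ in $\GG_0(U_r,T_2)$, so $\alpha-\tilde\sigma$ lies in the kernel of restriction, which by the localization sequence
\[\GG_0(\Delta_{r+1},T_2)\to\GG_0(\Delta_r,T_2)\to\GG_0(U_r,T_2)\to 0\]
equals the image of $\GG_0(\Delta_{r+1},T_2)$; write $\alpha-\tilde\sigma=k_*\beta$. Applying $(i_r)_*$ and using $i_r\circ k=i_{r+1}$ gives $(i_r)_*\alpha=(i_r)_*\tilde\sigma+(i_{r+1})_*\beta\in J+I_{r+1}$, completing the inductive step.

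I expect Claim A to be the main obstacle: one must verify that over $U_r$ the scheme-theoretic fibers of $\pi_r$ are reduced points and that the factorization map $f\mapsto(h,g)$ is a genuine isomorphism of schemes rather than merely a bijection, which is where the hypotheses $\Char k\neq 2,3$ enter (separability of the squaring map and reducedness of the relevant strata). Everything else is formal manipulation of the localization sequence (Proposition~\ref{prop.longexact}), the projection formula, and flat base change, all available from Section~\ref{sec.Ktheory}.
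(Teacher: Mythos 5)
Your formal skeleton is sound: the easy inclusion via the projection formula is correct, and the downward induction using localization and flat base change along open immersions is exactly the standard mechanism by which one proves that proper envelopes induce surjections on $\GG_0$. However, Claim A --- which you rightly flag as the crux --- is false for $r=1,2$. Over $U_r$ the map $\pi_r$ is a proper bijection on points, but it is \emph{not} an isomorphism of schemes: it is ramified at every point of $U_r$ corresponding to a form with a root of odd multiplicity at least $3$. Concretely, let $\ell$ be a linear form and $g_0$ a squarefree cubic prime to $\ell$, and take $f=\ell^3g_0$. By your own multiplicity criterion $\sum_i\lfloor m_i/2\rfloor = 1$, so $f\in U_1$, and the unique preimage is $([\ell],[\ell g_0])$. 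But the kernel of the projective differential of $\pi_1$ at this point is nonzero: the condition $2hgu+h^2v\equiv 0 \pmod{h^2g}$ with $h=\ell$, $g=\ell g_0$ reads $v=g_0(c\ell-2u)$, giving a three-dimensional solution space containing the two-dimensional space of scaling directions, so the differential has a one-dimensional kernel. Equivalently, computing the scheme-theoretic fiber in coordinates (the same computation in the quartic model $(h,g)\mapsto h^2g$ over $x^3(x-1)$ is quickest) yields $\Spec k[v]/(v^2)$, a fat point. This has nothing to do with characteristic --- it happens over $\C$ --- so separability of squaring cannot rescue the claim; and your proposed inverse ``extract $h$ by a gcd computation'' is not a morphism on $U_r$, because the degree of $\gcd(f,f')$ jumps precisely on the odd-multiplicity locus.

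The consequence is that your inductive step breaks: $\pi_r$ restricts to an isomorphism only over the strictly smaller open set $U_r^{\circ}\subset U_r$ of forms with exactly $r$ double roots and $6-2r$ distinct simple roots, so your argument only produces $\alpha-\tilde\sigma$ vanishing on $U_r^{\circ}$, hence an error term supported on $\Delta_r\setminus U_r^{\circ}$, which is strictly larger than $\Delta_{r+1}$ (the triple-root locus lies in $\Delta_1\setminus\Delta_2$). Bijectivity alone cannot substitute for the isomorphism: pushforward along a finite bijective morphism need not be surjective on $\GG_0$ (think of the normalization of a cuspidal curve). To repair the proof you need a finer invariant stratification and, for each extra stratum, an invariant subvariety of $\coprod_{r}\Pro^r\times\Pro^{6-2r}$ mapping birationally onto it --- for the triple-root locus, $\{([h],[hg_0])\}\subset\Pro^1\times\Pro^4$ does the job. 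This is precisely the ``equivariant Chow envelope'' property of $\pi=\coprod\pi_r$ that the paper quotes from Vistoli and Edidin--Fulghesu, after which it simply cites Anderson--Payne for the statement that pushforward along an equivariant envelope is surjective on $\GG_0$; that citation packages exactly the noetherian induction you are attempting, but run over the correct, finer stratification.
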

\begin{proof}
It has been shown in \cites{Vis:98,EdFu:09} that
\[\pi : \coprod_{r = 1}^3 \Pro^r \times \Pro^{6 - 2r} \longrightarrow \Delta_1\]
is an equivariant Chow envelope in the sense
of \cite{EdGr:98} meaning that 
for every $T_2$-invariant subvariety $V \subset \Delta_1$
there is an invariant subvariety $V'$ of $\coprod_{r=1}^3 \Pro^r \times \Pro^{6-2r}$ mapping birationally onto $V$. Hence by \cite[Proposition 2.2]{AnPa:15}
the pushforward
$$\pi_* \colon \coprod_{r = 1}^3 \K_0(\Pro^r \times \Pro^{6 - 2r},T_2) \longrightarrow \K_0(\Delta_1,T_2)$$
is surjective and the lemma follows.
\end{proof}

Now let $t = [\O_{\Pro^6}(1)]$, and for $r = 1,2,3$, let
\[x_{r,1} = [\O_{\Pro^r}(1)], \quad x_{6-2r,2} = [\O_{\Pro^{6 - 2r}}(1)].\]
Then $x_{r,1},x_{6-2r,2}$ and $t$ satisfy the relation
\[\pi_r^*(t) = x_{r,1}^2x_{6 - 2r,2}.\]
Therefore for each $r = 1,2,3$, the pushforward
$(\pi_r)_*\K_0(\Pro^r \times \Pro^{6-2r}, T_2)$ is generated by $(\pi_r)_*(x_{r,1}^k)$ for $k = 0,\cdots,r$ as a $\K_0(\Pro^6,T_2)$-module.

Using the localization method from the previous Section \ref{subsec.pushforward} on pushforwards along multiplication maps and combining with the relation $t = (ab)^{2}$, we have the following results.

For $r = 1$,
\begin{align*}
(\pi_1)_*(x_{1,1}^0) &= -\left(a^{10} b^{2} + a^{9} b^{3} + a^{8} b^{4} + a^{7} b^{5} + a^{6} b^{6} + a^{5} b^{7} + a^{4} b^{8} + a^{3} b^{9} + a^{2} b^{10}\right) t^{-2} \\
&\phantom{{} = }+ \left(a^{5} b + 2 a^{4} b^{2} + 2 a^{3} b^{3} + 2 a^{2} b^{4} + a b^{5}\right) t^{-1} \\
&\phantom{{} = }+ 1, \\
(\pi_1)_*(x_{1,1}^1) &= -\left(a^{10} b^3 + a^9 b^4 + a^8 b^5 + a^7 b^6 + a^6 b^7 + a^5 b^8 + a^4 b^9 + a^3 b^{10}\right) t^{-2} \\
&\phantom{{} = }+ \left(a^5 b^2 + 2a^4 b^3 + 2a^3 b^4 + a^2 b^5\right) t^{-1} \\
&\phantom{{} = }+ (a + b).
\end{align*}

We checked using SymPy \cite{sympy} that all other pushforwards are contained in the ideal generated by $(\pi_1)_*(1)$ and $(\pi_1)_*(x_{1,1})$.
In addition, the relation in $\K_0(\Pro^6,T_2)$ is also contained
in this ideal. In terms of symmetric polynomials in two variables, with notation
\[e_{-1}(a,b) = a^{-1} + b^{-1},\quad e_1(a,b) = a + b,\quad e_2(a,b) = ab,\]
corresponding to the generators of the representation ring $R(\GL_2)$ as the
$\Z_2$-invariant subring of $R(T_2)$, set
\begin{align*}
\alpha_{1,0}  &= 1 + e_{-1}h_3 - e_2^{-2}h_8 \\
&= h_0 + e_{-1}h_3 - e_2^{-2}h_8, \\
\alpha_{1,1} &= e_1 + e_1h_2 - e_2^{-1}h_7 \\
&= h_1 + e_1h_2 - e_2^{-1}h_7,
\end{align*}
where we denote by $h_n$ the complete homogeneous polynomial of degree $n$ in two variables $a,b$ corresponding to the representation $\Sym^n V$.
Note that $h_n$ is recursively given by $h_n - \epsilon h_{n - 1} + \lambda h_{n - 2} = 0$ with $h_0 = 1$ and $h_1 = \epsilon$.
Thus far we have shown that the image of
$\GG_0(\Aff(1,6)\setminus \Aff_{sm}(1,6), T_2)$ is generated
as an $R(T_2)$-module by the classes $\alpha_{1,0}, \alpha_{1,1} \in R(\GL_2)
\subset R(T_2)$.

Next we show that the
image of $\GG_0(\Aff(1,6)\setminus \Aff_{sm}(1,6), \GL_2)$ is generated by
the same generators as an $R(\GL_2)$-module. The argument is essentially
identical to the argument used in the proof of \cite[Theorem 4.9]{EdFu:09}.
Here we use the fact that the restriction map $R(\GL_2) \to R(T_2)$ is a
split injection of $R(\GL_2)$-modules so that $R(\GL_2)$ can
be identified with a summand in $R(T_2)$ \cite[Lemma 4.1]{EdGr:00}.
It follows that if $f \in R(\GL_2)$ is
in the image of $\GG_0(\Aff(1,6)\setminus \Aff_{sm}(1,6), \GL_2)$
we can write $f = p \alpha_{1,0} + q \alpha_{1,1}$ with $p, q \in R(T_2)$.
The elements $p,q \in R(T)$ can be written as $p = p_0 + p_1, q = q_0 + q_1$
with $p_0, q_0 \in R(\GL_2)$ and $p_1, q_1$ in a complementary $R(\GL_2)$-submodule $M$.
Thus we can write
$$ f= p_0 \alpha_{1,0} + q_0 \alpha_{1,1} + p_1 \alpha_{1,0} + q_1 \alpha_{1,1}.$$
But, $p_1 \alpha_{1,0} + q_1 \alpha_{1,1}$ lies in the complementary submodule
$M$, so it must be zero. Hence $f$ is in the ideal of $R(\GL_2)$ generated
by $\alpha_{1,0}$ and $\alpha_{1,1}$.

By \cites{Vis:98,EdFu:09}, with the identification $\M_2 \cong [\Aff_{sm}(1,6)/\GL_2]$, the Hodge bundle $\E$ on $\M_2$ corresponds to the defining representation $V$ of $\GL_2$. Therefore, we express $\epsilon = [\E] = e_1$ and $\lambda = [\det \E] = e_2$ and thus we proved Theorem \ref{thm-M2}. The 18 generators for $\K_0(\M_2)$ as an abelian group are given in Appendix~\ref{app.Zbasis}, and were computed using the $\tt{basis}$ command in Macaulay2. We then checked that these generators also form a basis for $\K_0(\M_2)_{\Q}$ (again using
the $\tt{basis}$ command with ground field $\Q$) which implies that $\K_0(\M_2)$ is a free abelian group
of rank 18.

\section{The Grothendieck rings of strata $\Delta_1$ and $\Mbar_2\setminus\Delta_1$}

To compute $\K_0(\Mbar_2)$ we stratify $\Mbar_2$ into the boundary
divisor $\Delta_1$, parametrizing curves with a separating node, and
its complement $\Mbar_2 \setminus \Delta_1$. We use their descriptions
as global quotient stacks given by Larson in \cite{Lar:21} to compute their
Grothendieck rings separately.

\subsection{The $\K$-theory of $\Delta_1$}
Let us start with a brief review on the substack $\Delta_1$.
Let $G = T_2 \rtimes S_2$ be the normalizer of the maximal torus $T_2$ in $\GL_2$.
If $L_r$ is the character of $\G_m$ with weight $r$ then we denote by
$W_{a_1, \ldots , a_n} = L_{a_1, \ldots , a_n} \boxplus L_{a_1, \ldots , a_n}$ the $(2n)$-dimensional
representation of $G$ defined as follow. As a vector space
it is the direct sum $(L_{a_1} \oplus \ldots \oplus L_{a_n}) \oplus (L_{a_1} \oplus \ldots \oplus L_{a_n})$
where the first copy of $\G_m$ in $T_2 = \G_m \times \G_m$ acts with weights
$(a_1, \ldots , a_n)$ on the first
factor and the second copy of $\G_m$ acts with weights $(a_1, \ldots , a_n)$ on the second factor.
Finally the $S_2$-action permutes the summands.

From \cite[Section 3.1]{Lar:21}, we know
that
\[\Delta_1 \cong \Sym^2 \Mbar_{1,1} \cong (W_{4,6} \setminus (L_{4,6} \times 0 \cup 0 \times L_{4,6})) / G.\]

\subsubsection{The representation ring of $G = N_{\GL_2}(T_2)$}

Our first task is to find the Grothendieck ring of the classifying space $BG$ where $G = T_2 \rtimes S_2$. Along with the computation of $\K_0(BG)$ we also give an explanation of relations in the ring $\K_0(BG)$ in terms of its representation theory.

Let $V$ be the defining representation of $\GL_2$. By \cite[Section
  5]{Lar:21}, there is a natural embedding $G \hookrightarrow \GL_2$
by viewing $G$ as the stabilizer of an unordered pair of distinct
lines in $V$, and hence the classifying space $BG$ is identified with
the complement of the image of the Veronese map $\pi_1 : [\Pro
V_1/\GL_2] \to [\Pro V_2/\GL_2]$. By the localization sequence on
$\pi_1(\Pro^1) \subset \Pro^2$ together with the fact
\[\K_0(\Pro^2,\GL_2) = \dfrac{\Z[e_1,e_2^{\pm 1}][t]}{((1 - e_2t^{-1})[1 - (e_1^2 - 2e_2)t^{-1} + e_2^2t^{-2}])},\]
from Section \ref{subsec.pushforward}, it suffices to find the ideal generated by the pushforwards $(\pi_1)_*(1)$ and $(\pi_1)_*(x)$ in $\K_0(\Pro^2,\GL_2)$ where $x = [\O_{\Pro^1}(1)] \in \K_0(\Pro^1,\GL_2)$. It has been computed in Example \ref{eg.Veronese} that 
\begin{align*}
& (\pi_1)_*(1) = 1 - e_2^2t^{-2}, \\
& (\pi_1)_*(x) = e_1(1 - e_2t^{-1}).
\end{align*}
Observe that the relation in $\K_0(\Pro^2,\GL_2)$ can be rewritten as
\begin{align*}
&\phantom{{} = } (1 - e_2t^{-1})[1 - (e_1^2 - 2e_2)t^{-1} + e_2^2t^{-2}] \\ 
&= -e_1^2(1 - e_2t^{-1}) + (1 - e_2t^{-1})(1 + e_2t^{-1})^2,
\end{align*}
which implies it is contained in the ideal generated by $(\pi_1)_*(1)$ and $(\pi_1)_*(x)$. Therefore
\[\K_0(BG) = \dfrac{\Z[e_1,e_2^{\pm 1},t^{\pm 1}]}{(1 - e_2^2t^{-2},e_1(1 - e_2t^{-1}))}.\]

Now we describe $\K_0(BG)$ in terms of representations of $G$. Recall that $V$ is the defining representation of $\GL_2$. Its restriction to $G$ is the representation $W_1 = L_1 \boxplus L_1$ and
we denote its class by $\epsilon$.
It is the restriction of the class we denoted
by $e_1 \in R(\GL_2)$. Denote by $\lambda = [\det W_1]$ which is the restriction of $e_2 \in R(\GL_2)$.

\begin{thm}
The Grothendieck ring, \ie the representation ring, of $BG$ is given by
\[\K_0(BG) = \dfrac{\Z[\epsilon,\lambda^{\pm 1},\gamma^{\pm 1}]}{(1 - \gamma^2, \epsilon(1 - \gamma))},\]
where $\gamma$ is the pullback of the sign representation under the quotient $G \to S_2$.
\end{thm}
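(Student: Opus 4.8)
The plan is to recognize the theorem as a repackaging of the presentation
\[\K_0(BG) = \frac{\Z[e_1,e_2^{\pm 1},t^{\pm 1}]}{(1 - e_2^2 t^{-2},\, e_1(1 - e_2 t^{-1}))}\]
just obtained, now written in generators that are manifestly representations of $G$. The variables $e_1 = [V|_G] = \epsilon$ and $e_2 = [\det V|_G] = \lambda$ already carry such a meaning, so the entire content is to identify the remaining generator $t = [\O_{\Pro^2}(1)]|_{BG}$ as a character of $G$ and to prove it equals $\lambda\gamma$, where $\gamma$ is the pullback of the sign representation along $G \to S_2$.

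Using the given identification of $BG$ with the complement of the Veronese image inside $[\Pro(\Sym^2 V^*)/\GL_2]$, I would compute the restriction of $\O_{\Pro^2}(1)$ to $BG$ as the $G$-character on its fiber over a representative point with stabilizer $G$, namely $Q_2 = [x_0x_1]$. The fiber of $\O(-1)$ at $Q_2$ is the line $k\cdot x_0x_1 \subset \Sym^2 V^*$; the diagonal torus $\mathrm{diag}(s,t)$ scales it by $s^{-1}t^{-1}$, while the swap $w$ sends $x_0x_1 \mapsto x_0x_1$ and so acts by $+1$. Hence on $\O(1)$ the torus acts by $\chi_1\chi_2$ and $w$ acts by $+1$. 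By contrast $\lambda = [\det V]$ has the torus acting by $\chi_1\chi_2$ but $w$ acting by $\det w = -1$. Thus $t$ and $\lambda$ agree on $T_2$ and differ by a sign on $w$, so $t\lambda^{-1}$ is trivial on $T_2$ and equals $-1$ on $w$, i.e.\ it is exactly the sign character $\gamma$. This gives the key identity $t = \lambda\gamma$, equivalently $\gamma = t e_2^{-1}$.

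Finally I would carry out the change of variables. Since $\gamma = t\lambda^{-1}$ is a unit and $t = \lambda\gamma$ recovers $t$, the assignment $\epsilon = e_1$, $\lambda = e_2$, $\gamma = t e_2^{-1}$ is an isomorphism of Laurent polynomial rings $\Z[e_1,e_2^{\pm 1},t^{\pm 1}] \cong \Z[\epsilon,\lambda^{\pm 1},\gamma^{\pm 1}]$. Under it the first relation becomes $1 - e_2^2 t^{-2} = 1 - \gamma^{-2} = -\gamma^{-2}(1-\gamma^2)$, which generates the same ideal as $1-\gamma^2$, and the second becomes $e_1(1-e_2 t^{-1}) = \epsilon(1-\gamma^{-1}) = -\gamma^{-1}\epsilon(1-\gamma)$, which generates the same ideal as $\epsilon(1-\gamma)$. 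This yields the claimed presentation.

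I expect the only genuine subtlety to be the sign bookkeeping in the second paragraph: the whole of the extra structure of $R(G)$ beyond $R(T_2)$ comes precisely from the fact that $\det V$ carries the $S_2$-sign while $\O(1)$ does not, so one must confirm that $\gamma = t\lambda^{-1}$ really is the pullback of the sign and not some other unit. As a consistency check I would observe that the resulting relations are representation-theoretically natural: $\gamma^2 = 1$ records that $\gamma$ is an order-two character, and $\epsilon(1-\gamma)=0$ records the isomorphism $V|_G \otimes \gamma \cong V|_G$, which holds because $V|_G = \Ind_{T_2}^{G}\chi_1$ and $\gamma$ restricts trivially to $T_2$.
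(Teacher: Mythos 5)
Your proposal is correct, and while its skeleton matches the paper's --- both treat the theorem as a change of variables in the presentation $\K_0(BG) = \Z[e_1,e_2^{\pm 1},t^{\pm 1}]/(1 - e_2^2t^{-2},\, e_1(1 - e_2t^{-1}))$ derived just before the theorem --- the key identification is justified by a genuinely different argument. The paper reasons indirectly: $\gamma$ is a line element satisfying $1 - \gamma^2 = 0$, the only such line element in the presentation is asserted to be $e_2t^{-1}$, and substituting $\gamma = e_2t^{-1}$ gives the result. You instead compute the $G$-character of $t|_{BG}$ directly at the point $[x_0x_1]$ whose stabilizer is $G$, finding that the swap acts by $+1$ on the fiber of $\O_{\Pro^2}(1)$ but by $-1$ on $\det V$, hence $\gamma = te_2^{-1}$. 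The two identifications agree in $\K_0(BG)$: the relation $1 - e_2^2t^{-2} = 0$ says exactly that $e_2t^{-1}$ is self-inverse, so $e_2t^{-1} = te_2^{-1}$ there, and (as your sign bookkeeping shows) either substitution carries the old relations onto the ideal $(1-\gamma^2,\,\epsilon(1-\gamma))$. What your route buys is rigor at the one point the paper leaves unargued: the paper neither classifies the line elements of its presentation nor rules out $\gamma = 1$, whereas your fiber computation pins down the character of $t$ outright, and it stays consistent with the paper's conventions (the weights $\chi_1^{-2},\chi_1^{-1}\chi_2^{-1},\chi_2^{-2}$ of Example~\ref{eg.prod}). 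Your closing consistency check, that $\epsilon\gamma = \epsilon$ reflects $W_1 \otimes \Gamma \cong W_1$, is exactly item (3) of the paper's Proposition~\ref{prop.duals}. The paper's argument is, in exchange, shorter and purely formal.
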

\begin{proof}
  Since $\gamma$ is the pullback of the sign representation of $S_2$ it is a line element that
  satisfies
  the relation $1 - \gamma^2 = 0$ in $\K_0(BG)$. The only such line element in our description
  of $\K_0(BG)$ is $e_2t^{-1}$. By substituting $\gamma = e_2t^{-1}$ we get the final ring structure of $\K_0(BG)$. 
\end{proof}

\subsubsection{Tensor relations among the irreducible representations of $G=N_{\GL_2}(T_2)$}
In this section, we describe some tensor relations
among the irreducible representations of $G= N_{\GL_2}(T_2)$
that we will need for our computation of $\K_0(\Delta_1)$. 
The quotient $G/T_2 = S_2$ is the Weyl group of $\GL_2$
and it acts on $T_2$ by permuting the torus factors. Let $\Gamma$
be the pullback of the sign representation of $S_2$.
Since $T_2 \subset G$ is a normal subgroup of finite index, we know from the Clifford's theory that all the irreducible representations of $G$ can be constructed using induced representations of characters of the maximal torus $T_2$.
The (left) $T_2$-coset decomposition of $G$ is 
\[G = T_2 \cup \left\{\left.\begin{pmatrix}
0 & t_1 \\ t_2 & 0
\end{pmatrix} \right| t_1,t_2 \in \G_m \right\}.\]
Choose representatives 
\[e = \begin{pmatrix}
1 & 0 \\ 0 & 1
\end{pmatrix},\quad \sigma = \begin{pmatrix}
0 & 1 \\ 1 & 0
\end{pmatrix},\]
for these cosets.
If $W$ is any character of $T_2$, then the induced representation $V=\Ind_{T_2}^GW$ decomposes
as $V = W^e \oplus W^\sigma$ corresponding to the two cosets of $T_2$ in $G$.
Following the notation of \cite{FuHa:91} let $w$ be a
basis element for $W=W^e$ so that  $V$ is spanned by the elements
$w \in W^e, \sigma w \in W^\sigma$.
For simplicity, let 
\[t = \begin{pmatrix}
t_1 & 0 \\ 0 & t_2
\end{pmatrix},\quad t' = \begin{pmatrix}
0 & t_1 \\ t_2 & 0
\end{pmatrix}.\]

If $W$ is a character of $T_2$ then
\begin{align*}
& t \cdot \sigma w = \sigma \left(\begin{pmatrix} t_2 & 0\\
0 & t_1 \end{pmatrix} \cdot w\right), \\
& t' \cdot w = \sigma\left(\begin{pmatrix} t_2 & 0\\
0 & t_1 \end{pmatrix}\cdot w\right), \\
& t' \cdot \sigma w = t \cdot w.
\end{align*}
  
Using this description of the induced representation of a character
of $T_2$ we obtain the following identities.

\begin{itemize}
\item[(1)] If $1$ denotes the trivial representation
  then $\Ind_{T_2}^G(1) = 1 \oplus \Gamma$ where
  the trivial summand has basis $w + \sigma w$ and the summand
  corresponding to $\Gamma$ has basis $w - \sigma w$.

\item[(2)] $\Ind_{T_2}^G(\chi_1^m) = W_m$.

\item[(3)] $\Ind_{T_2}^G(\chi_2^m) = W_m \otimes \Gamma$.

\item[(4)] $\Ind_{T_2}^G(\chi_1^m \chi_2^m) = \det W_m \otimes (1 \oplus \Gamma)$.
  
\end{itemize}

\begin{prop} \label{prop.recur}
There is a relation among representations of $G$
\[W_{n - 1} \otimes W_1 = W_n \oplus (W_{n - 2} \otimes \wedge^2 W_1 \otimes \Gamma).\]
\end{prop}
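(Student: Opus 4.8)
The plan is to realize everything through induced representations, exploiting the description $W_m = \Ind_{T_2}^G(\chi_1^m)$ from identity (2) together with the projection formula (tensor identity)
\[
\Ind_{T_2}^G(A)\otimes B \;\cong\; \Ind_{T_2}^G\!\left(A\otimes \Res_{T_2}^G B\right),
\]
which is available because $T_2\subset G$ is a normal subgroup of index two. This turns the asserted decomposition into a purely bookkeeping statement about characters of $T_2$, with the $S_2$-structure handled automatically by the induction.

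First I would expand the left-hand side. Since $\Res_{T_2}^G W_1 = \chi_1\oplus\chi_2$, applying the projection formula with $A=\chi_1^{n-1}$ and $B=W_1$ gives
\[
W_{n-1}\otimes W_1 = \Ind_{T_2}^G(\chi_1^{n-1})\otimes W_1 = \Ind_{T_2}^G\!\left(\chi_1^{n-1}(\chi_1\oplus\chi_2)\right) = \Ind_{T_2}^G(\chi_1^{n})\oplus \Ind_{T_2}^G(\chi_1^{n-1}\chi_2).
\]
By identity (2) the first summand is exactly $W_n$, which produces the first piece of the claim at once. For the second summand I would factor $\chi_1^{n-1}\chi_2 = \chi_1^{n-2}\cdot(\chi_1\chi_2)$ and note that $\chi_1\chi_2 = \Res_{T_2}^G(\det W_1)$; applying the projection formula a second time yields
\[
\Ind_{T_2}^G(\chi_1^{n-1}\chi_2) = \Ind_{T_2}^G\!\left(\chi_1^{n-2}\otimes\Res_{T_2}^G\det W_1\right) = \Ind_{T_2}^G(\chi_1^{n-2})\otimes\det W_1 = W_{n-2}\otimes\wedge^2 W_1.
\]

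The one genuinely subtle point—and the step I expect to be the only real obstacle—is reconciling this with the stated right-hand side, which carries an extra factor of $\Gamma$. The resolution is that tensoring any $W_m$ with the sign character is harmless: because $\Res_{T_2}^G\Gamma$ is trivial, the projection formula gives $W_m\otimes\Gamma = \Ind_{T_2}^G(\chi_1^m)\otimes\Gamma = \Ind_{T_2}^G(\chi_1^m) = W_m$. Hence $W_{n-2}\otimes\wedge^2 W_1\cong W_{n-2}\otimes\wedge^2 W_1\otimes\Gamma$, so the two forms of the statement agree. I would record this absorbing identity $W_m\otimes\Gamma\cong W_m$ explicitly, since it is precisely what makes the $\Gamma$ in the statement cosmetic and is the natural place to make a sign error.

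As a sanity check, and an entirely elementary alternative, I would also verify the claim with explicit weight vectors: writing $W_{n-1}=\langle u_1,u_2\rangle$ and $W_1=\langle v_1,v_2\rangle$ with $\sigma$ interchanging the two basis vectors in each factor, the element $\sigma$ interchanges the planes $\langle u_1\otimes v_1,\,u_2\otimes v_2\rangle$ and $\langle u_1\otimes v_2,\,u_2\otimes v_1\rangle$, and each plane is $G$-stable (being a sum of $T_2$-weight lines preserved by $\sigma$). The first plane carries weights $\chi_1^{n},\chi_2^{n}$ and is $W_n$, while the second carries weights $\chi_1^{n-1}\chi_2,\chi_1\chi_2^{n-1}$ and is identified with $W_{n-2}\otimes\wedge^2 W_1$ up to the harmless sign twist. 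This makes the decomposition transparent, confirms there are no further summands, and covers the degenerate case $n=2$ (where $W_0=1\oplus\Gamma$ is reducible) without change, since $W_0\otimes\Gamma\cong W_0$ still holds.
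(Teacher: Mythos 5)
Your proof is correct and takes essentially the same route as the paper: both arguments realize the two sides as inductions from $T_2$ of the same sum of characters (namely $\chi_1^{n}\oplus\chi_1^{n-1}\chi_2$, up to $\sigma$-conjugation) using the projection formula, the only cosmetic difference being that the paper produces the $\Gamma$-factor directly via the identity $\Ind_{T_2}^G(\chi_2^m)=W_m\otimes\Gamma$, while you first obtain $W_{n-2}\otimes\wedge^2 W_1$ and then invoke the absorption $W_m\otimes\Gamma\cong W_m$, which is the paper's Proposition~\ref{prop.duals}(3). One minor wording slip in your weight-vector check: $\sigma$ \emph{preserves} each of the two planes, swapping the two basis vectors within each, rather than interchanging the planes themselves---your own parenthetical already states the correct fact.
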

\begin{proof}
It suffices to show that both sides are induced representations from the same representation of $T_2$. Consider the following representation $W$ of $T_2$
\begin{align*}
W &= \chi_1^n \oplus (\chi_2^{n - 1} \otimes \chi_1) \\
&= (\chi_1^{n - 1} \otimes \chi_1) \oplus (\chi_2^{n - 1} \otimes \chi_1) \\
&= (\chi_1^{n - 1} \oplus \chi_2^{n - 1}) \otimes \chi_1.
\end{align*}
Therefore we get 
\begin{align*}
\Ind_{T_2}^G(W) &= \Ind(\chi_1^n \oplus (\chi_2^{n - 1} \otimes \chi_1)) \\
&= \Ind(\chi_1^n) \oplus \Ind(\Res(\wedge^2 W_1) \otimes \chi_2^{n - 2}) \\
&= W_n \oplus (\wedge^2 W_1 \otimes W_{n - 2} \otimes \Gamma),
\end{align*}
and
\begin{align*}
\Ind_{T_2}^G(W) &= \Ind((\chi_1^{n - 1} \oplus \chi_2^{n - 1}) \otimes \chi_1) \\
&= \Ind(\Res(W_{n - 1}) \otimes \chi_1) \\
&= W_{n - 1} \otimes W_1
\end{align*}
which give the desired result.
\end{proof}

\begin{prop} \label{prop.duals}
The following relations hold in $R(G)$.
\begin{itemize}
\item[(1)] $[W_n^\vee] = [W_{-n}]$ and $[\wedge^2 W_n^\vee] = [\wedge^2 W_{-n}]$; 
\item[(2)] $[W_1] = [W_{-1}][\wedge^2 W_1]$.
\item[(3)] $[\Gamma \otimes W_n] = [W_n]$.
\end{itemize}

\end{prop}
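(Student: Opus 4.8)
The plan is to deduce all three relations from the single description $W_n = \Ind_{T_2}^G(\chi_1^n)$ recorded in item (2) preceding the proposition, using only the formal properties of induction from the finite-index subgroup $T_2 \subset G$ (of index $2$): that induction commutes with passage to contragredients, and that it satisfies the projection formula $\Gamma \otimes \Ind_{T_2}^G(U) \cong \Ind_{T_2}^G(\Res_{T_2}\Gamma \otimes U)$. I would treat (3) first, as it is the cleanest. Since $\Gamma$ is pulled back from the sign representation of $S_2 = G/T_2$, its restriction to $T_2$ is the trivial character, so the projection formula gives
\[
\Gamma \otimes W_n = \Gamma \otimes \Ind_{T_2}^G(\chi_1^n) = \Ind_{T_2}^G\!\left(\Res_{T_2}\Gamma \otimes \chi_1^n\right) = \Ind_{T_2}^G(\chi_1^n) = W_n,
\]
which is exactly (3).

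For the first assertion of (1), I would invoke that induction from a finite-index subgroup commutes with duals, so that
\[
W_n^\vee = \left(\Ind_{T_2}^G \chi_1^n\right)^\vee \cong \Ind_{T_2}^G\!\left((\chi_1^n)^\vee\right) = \Ind_{T_2}^G(\chi_1^{-n}) = W_{-n}.
\]
The second assertion of (1) is then an immediate consequence: applying $\wedge^2$ to the $G$-equivariant isomorphism $W_n^\vee \cong W_{-n}$ yields $\wedge^2 W_n^\vee \cong \wedge^2 W_{-n}$, hence equality of classes in $R(G)$.

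Relation (2) I would obtain from the general rank-two identity $W \cong W^\vee \otimes \wedge^2 W$: for any two-dimensional representation the natural map $v \mapsto (w \mapsto v \wedge w)$ is a $\GL(W)$-equivariant, hence $G$-equivariant, isomorphism $W_1 \xrightarrow{\ \sim\ } \Hom(W_1, \wedge^2 W_1) = W_1^\vee \otimes \wedge^2 W_1$. Combining this with $W_1^\vee = W_{-1}$ from (1) gives $[W_1] = [W_{-1}][\wedge^2 W_1]$.

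The point of this approach is that each relation becomes a one-line application of a standard formal tool, so there is no genuine computational obstacle. The only care required is that every isomorphism be $G$-equivariant rather than merely $T_2$-equivariant: the underlying $T_2$-weights already match trivially in each case (for instance $W_{-1} \otimes \wedge^2 W_1$ and $W_1$ both carry the $T_2$-weights $\chi_1,\chi_2$), so the substantive content is purely the compatibility with the swap $\sigma$. The induction and naturality formalism dispatches this automatically; alternatively, one could verify it by hand from the explicit formulas for the $G$-action on $\Ind_{T_2}^G(W)$ given just before the proposition, checking in each case that the proposed map intertwines the action of the coset representative $\sigma$. I would present the formal argument as primary and remark that the explicit check is routine.
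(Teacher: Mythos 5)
Your proof is correct, and every step holds, but your route is genuinely different from the paper's. The paper handles (1) by a direct check (and in its text only records the $\wedge^2$ identity for $n=1$, whereas your observation that finite-index induction commutes with contragredients, $(\Ind_{T_2}^G U)^\vee \cong \Ind_{T_2}^G(U^\vee)$, gives all $n$ uniformly); it obtains (2) by specializing the recursion of Proposition \ref{prop.recur} to $n=1$, where $W_0 = 1 \oplus \Gamma$, and cancelling the invertible class $\Gamma$ in $R(G)$; and it proves (3) either by appeal to the relation $\epsilon(1-\gamma)=0$ in $\K_0(BG)$ or by computing $\Ind_{T_2}^G(\chi_1^m\chi_2^n)$ in two ways. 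You instead make each item a one-line application of a formal property of induction from the index-two subgroup $T_2$: the projection formula (tensor identity) for (3), using that $\Res_{T_2}\Gamma$ is trivial; duality of induction for (1); and the canonical $\GL(W)$-equivariant isomorphism $W \cong W^\vee \otimes \wedge^2 W$ for two-dimensional $W$, restricted along $G \subset \GL_2$ and combined with (1), for (2). What your approach buys: it is independent of Proposition \ref{prop.recur} and of the presentation of $\K_0(BG)$, it treats all $n$ at once, and it produces genuine $G$-equivariant isomorphisms of representations rather than only equalities of classes. What the paper's approach buys: it stays entirely within the explicit coset/induced-representation bookkeeping set up just before the proposition, which is the same apparatus reused immediately afterward for Lemma \ref{lem.pushforward} and for expanding $[W_4^\vee]$ and $[W_6^\vee]$ in the computation of $\K_0(\Delta_1)$, so no further formal machinery needs to be invoked.
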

\begin{proof}
It can be easily checked directly that $[W_n^\vee] = [W_{-n}]$ and $[\wedge^2 W_1^\vee] = [\wedge^2 W_{-1}]$. Moreover by setting $n = 1$, we have
\[W_{0} \otimes W_1 = W_1 \oplus (W_{-1} \otimes \wedge^2 W_{1} \otimes \Gamma)\] 
which leads to a relation $[W_1] = [W_{-1}][\wedge^2 W_1]$. For (3), first note that it can be derived from the relation $\epsilon(1 - \gamma) = 0$. A direct proof also shows
\begin{align*}
\Ind(\chi_1^m\otimes\chi_2^n) &= \Ind(\Res(\wedge^2 W_m) \otimes \chi_2^{n - m}) = \wedge^2 W_m \otimes \Gamma \otimes W_{n - m} \\
&= \Ind(\Res(\wedge^2 W_n) \otimes \chi_1^{m - n}) = \wedge^2 W_n \otimes W_{m - n}.
\end{align*}
Together with $[W_{-n}] = \lambda^{-n}[W_n]$ we obtain $[\Gamma \otimes W_n] = [W_n]$ as desired.
\end{proof}

By a direct computation, we get 
\[[\wedge^2 W_n] = \begin{cases}
\gamma\lambda^n &\text{if $n$ is even}; \\
\lambda^n &\text{if $n$ is odd},
\end{cases}\]
and likewise $[\wedge^2 W_n^\vee] = \gamma\lambda^{-n}$ if $n$ is even and $[\wedge^2 W_n^\vee] = \lambda^{-n}$ if $n$ is odd. 
In conclusion,
\begin{lem}\label{lem.pushforward}
The pushforward map along $\pi : R(T_2) \to R(G)$ can be described as follows
\begin{align*}
& \pi_*(1) = [1 \oplus \Gamma], \\
& \pi_*(a^mb^n) = [\wedge^2 W_n \otimes W_{m - n}] = [\wedge^2 W_m \otimes W_{n - m} \otimes \Gamma], \text{ for $m \neq n$},\\
& \pi_*(a^mb^m) = [\wedge^2 W_m \otimes W_0],
\end{align*}
where, as before,  $a$ and $b$ denote the $\K$-theoretic classes of
$\chi_1$ and $\chi_2$ respectively.
\end{lem}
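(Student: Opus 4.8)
The plan is to identify the pushforward $\pi_*$ with the induction functor and then reduce each of the three formulas to the basic induction identities (1)--(4) recorded above, together with the projection formula. Since $T_2 \subset G$ has index two, the map $\pi \colon BT_2 \to BG$ is a finite étale cover of degree two, and the induced pushforward on Grothendieck rings is the transfer map, which on representation rings is precisely induction $\Ind_{T_2}^G \colon R(T_2) \to R(G)$. Thus the task reduces to computing $\Ind_{T_2}^G(\chi_1^m \chi_2^n)$ in the three cases $m = n = 0$, $m = n \neq 0$, and $m \neq n$, keeping in mind that $a = [\chi_1]$ and $b = [\chi_2]$, so that $a^m b^n = [\chi_1^m \chi_2^n]$.

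The only algebraic input beyond (1)--(4) is the projection formula $\Ind_{T_2}^G(\Res_{T_2}^G(U) \otimes \psi) = U \otimes \Ind_{T_2}^G(\psi)$ for a $G$-representation $U$ and a character $\psi$ of $T_2$. I would combine this with the elementary restriction computation $\Res_{T_2}^G(W_k) = \chi_1^k \oplus \chi_2^k$, which gives $\Res_{T_2}^G(\wedge^2 W_k) = \chi_1^k \chi_2^k$. For $m \neq n$ this allows the character to be factored in two ways,
\[\chi_1^m \chi_2^n = \Res(\wedge^2 W_n) \otimes \chi_1^{m-n} = \Res(\wedge^2 W_m) \otimes \chi_2^{n-m},\]
and applying the projection formula together with (2) and (3) yields
\[\pi_*(a^m b^n) = \wedge^2 W_n \otimes W_{m-n} = \wedge^2 W_m \otimes W_{n-m} \otimes \Gamma,\]
which is exactly the middle identity. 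This is the same computation already displayed in the proof of Proposition \ref{prop.duals}, and the equality of the two factorizations is automatic since both compute the single class $\Ind(\chi_1^m\chi_2^n)$.

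For the diagonal case $m = n$ I would apply identity (4) directly, $\Ind_{T_2}^G(\chi_1^m \chi_2^m) = \det W_m \otimes (1 \oplus \Gamma)$, and then rewrite it using $\det W_m = \wedge^2 W_m$ together with the observation that $W_0 = L_0 \boxplus L_0$ splits into its symmetric and antisymmetric parts as $[W_0] = [1 \oplus \Gamma]$; this gives $\pi_*(a^m b^m) = [\wedge^2 W_m \otimes W_0]$. The case $\pi_*(1)$ is the specialization $m = n = 0$, where $\wedge^2 W_0$ is trivial, so one recovers $\pi_*(1) = [W_0] = [1 \oplus \Gamma]$ from (1).

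I do not anticipate a genuine obstacle: the content of the lemma is the identification of $\pi_*$ with induction, followed by careful bookkeeping distinguishing the reducible diagonal case ($m = n$, where the induced representation splits as a line bundle tensored with $1 \oplus \Gamma$) from the irreducible off-diagonal case ($m \neq n$, where $W_{m-n}$ is two-dimensional). The point requiring the most care is simply the consistent use of $\Res(\wedge^2 W_k) = \chi_1^k\chi_2^k$ when rewriting the characters to expose a restricted $G$-representation, after which the projection formula does all the work.
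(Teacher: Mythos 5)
Your proposal is correct and takes essentially the same route as the paper, which states the lemma as a conclusion ("In conclusion") of the induction identities (1)--(4) and the projection-formula computation $\Ind(\chi_1^m\chi_2^n) = \wedge^2 W_n \otimes W_{m-n} = \wedge^2 W_m \otimes W_{n-m} \otimes \Gamma$ already displayed in the proof of Proposition~\ref{prop.duals}, with the pushforward identified with induction exactly as you do. One small correction: $\wedge^2 W_0$ is not trivial but equals $\Gamma$ (the swap acts by $-1$; this is the $n=0$ case of the paper's formula $[\wedge^2 W_n] = \gamma\lambda^n$ for $n$ even), though this does not affect your conclusion, since $\Gamma \otimes W_0 \cong W_0$ and identity (1) gives $\pi_*(1) = [1 \oplus \Gamma]$ directly.
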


\subsection{Computation of $\K_0(\Delta_1)$}

Recall that $\Delta_1 \cong (W_{4,6} \setminus (L_{4,6} \times 0 \cup 0 \times L_{4,6}))/G$. We first excise $0 \times 0$ from $W_{4,6}$ and by the localization sequence 
\[\K_0(W_{4,6}\setminus (0 \times 0),G) = \dfrac{\K_0(BG)}{(\lambda_{-1}(W_{4,6}^\vee))}.\]

Expanding 
\begin{align*}
\lambda_{-1}(W_{4,6}^\vee) &= \lambda_{-1}(W_4^\vee)\lambda_{-1}(W_6^\vee) \\
&= (1 - [W_4^\vee] + [\wedge^2 W_4^\vee])(1 - [W_6^\vee] + [\wedge^2 W_6^\vee]),
\end{align*}
using the identities of Proposition \ref{prop.duals} together with
the recursive formula of Proposition \ref{prop.recur} we obtain
that
\begin{align*}
[W_4^\vee] &= \lambda^{-4}[\epsilon^4 - 4\epsilon^2\lambda + (1 + \gamma)\lambda^2], \\
[W_6^\vee] &= \lambda^{-6}[\epsilon^6 - 6\epsilon^4\lambda + 9\epsilon^2\lambda^2 - (1 + \gamma)\lambda^3],
\end{align*}
which yield a formula for $\lambda_{-1}(W_{4,6}^{\vee})$ in terms
of the generators $\epsilon, \lambda^{\pm 1}, \gamma$ of $R(G)$.

In order to compute $\K_0(\Delta_1)$, we need to further excise
$(((L_{4,6} \setminus 0) \times 0) \cup (0 \times (L_{4,6} \setminus
0)))/G$ in $(W_{4,6} \setminus 0 \times 0)/G$. Using the same argument
in \cite[Section 8]{Lar:21}, it suffices to find the images of $1,a$
which are pushforwards of $1 \cdot \lambda_{-1}(N^\vee_{L_{4,6} \times
  0} (L_{4,6} \oplus L_{4,6}))$ and $a\cdot
\lambda_{-1}(N^\vee_{L_{4,6} \times 0} (L_{4,6} \oplus L_{4,6}))$
along the map $R(T_2) \to R(G)$ as described in Lemma
\ref{lem.pushforward} by taking induced representations.

Notice that $N^\vee_{L_{4,6} \times 0} (L_{4,6}\oplus L_{4,6}) = b^{-4} + b^{-6}$ and thus $\lambda_{-1}(N^\vee_{L_{4,6} \times 0} (L_{4,6}\oplus L_{4,6})) = (1 - b^{-4})(1 - b^{-6})$. Therefore we can write down explicitly the pushforwards of $(1 - b^{-4})(1 - b^{-6})$ and $a(1 - b^{-4})(1 - b^{-6})$ as follows
\begin{align*}
R_1 &\coloneq (1 + \gamma) - \gamma(\lambda^{-4}[W_4] + \lambda^{-6}[W_6] - \lambda^{-10}[W_{10}]), \\
R_2 &\coloneq \epsilon - \gamma(\lambda^{-4}[W_5] + \lambda^{-6}[W_7] - \lambda^{-10}[W_{11}]).
\end{align*}
Once again these can be expressed in terms of the generators $\epsilon, \lambda^{\pm 1}, \gamma$ for $R(G$).

\begin{thm}\label{thm.delta_1}
The Grothendieck ring of $\Delta_1$ is given by 
\[\K_0(\Delta_1) = \Z[\epsilon,\lambda^{\pm 1},\gamma^{\pm 1}]/(1 - \gamma^2,\epsilon(1 - \gamma),R_0,R_1,R_2)\]
where $R_0 = \lambda_{-1}(W_{4,6}^\vee)$ and $R_1,R_2$ are defined as above.
\end{thm}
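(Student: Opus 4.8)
The plan is to obtain the presentation through two successive applications of the localization sequence (Proposition~\ref{prop.longexact}), beginning from the equivariant $\K$-theory of the ambient representation. By homotopy invariance one identifies $\K_0(W_{4,6},G) = \K_0(B G) = \Z[\epsilon,\lambda^{\pm1},\gamma^{\pm1}]/(1-\gamma^2,\epsilon(1-\gamma))$, and the closed complement of $\Delta_1$ inside $[W_{4,6}/G]$ is removed in two stages: first the origin $0\times 0$, then the two punctured lines $Z = ((L_{4,6}\setminus 0)\times 0)\cup(0\times(L_{4,6}\setminus 0))$. The theorem amounts to showing that the first stage contributes exactly the relation $R_0$ and the second exactly $R_1$ and $R_2$.

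For the first excision I would apply Proposition~\ref{prop.longexact} to $\{0\}\hookrightarrow W_{4,6}$. The conormal bundle of the origin is $W_{4,6}^\vee$, so by the self-intersection formula (Proposition~\ref{prop.selfintersection}) the image of $\GG_0(\{0\},G)=R(G)$ is the principal ideal generated by $i_*(1)=\lambda_{-1}(W_{4,6}^\vee)=R_0$; hence $\K_0(W_{4,6}\setminus 0,G)=\K_0(BG)/(R_0)$. Expanding $\lambda_{-1}(W_{4,6}^\vee)=\lambda_{-1}(W_4^\vee)\lambda_{-1}(W_6^\vee)$ and applying the duality identities of Proposition~\ref{prop.duals} rewrites $R_0$ in the generators $\epsilon,\lambda^{\pm1},\gamma$.

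The second excision is the substantive step. Since $S_2$ interchanges the two components of $Z$ while $T_2$ stabilizes each, the orbit--stabilizer principle gives $[Z/G]\cong [(L_{4,6}\setminus 0)/T_2]$, so that $\GG_0(Z,G)\cong \K_0(L_{4,6}\setminus 0,T_2)=R(T_2)/(\lambda_{-1}(L_{4,6}^\vee))$. I would then factor the inclusion $i\colon [Z/G]\hookrightarrow [(W_{4,6}\setminus 0)/G]$ as the $T_2$-equivariant regular embedding of the single component $(L_{4,6}\setminus 0)\times 0$ (whose normal bundle is the second copy, with conormal class $b^{-4}+b^{-6}$) followed by the étale double cover $p\colon [(W_{4,6}\setminus 0)/T_2]\to[(W_{4,6}\setminus 0)/G]$. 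The embedding contributes the Koszul/self-intersection factor $\lambda_{-1}(N^\vee)=(1-b^{-4})(1-b^{-6})$, and $p_*$ is, by base change against $BT_2\to BG$, precisely the induction map $\Ind\colon R(T_2)\to R(G)$ recorded in Lemma~\ref{lem.pushforward}. Evaluating $\Ind\bigl((1-b^{-4})(1-b^{-6})\bigr)$ and $\Ind\bigl(a(1-b^{-4})(1-b^{-6})\bigr)$ term by term with that lemma (using $\wedge^2 W_{-n}=\gamma\lambda^{-n}$ for even $n$) yields exactly $R_1$ and $R_2$.

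To see that $R_1,R_2$ generate the whole image of $i_*$, I would note that $R(T_2)$ is free of rank two over the restricted subring $\Res(R(G))=R(T_2)^{S_2}$ with basis $\{1,a\}$; consequently $\GG_0(Z,G)$ is generated over $R(G)$, hence over the quotient ring $\K_0(W_{4,6}\setminus 0,G)$, by the classes $1$ and $a$, and by the projection formula the image of $i_*$ is the ideal $(i_*(1),i_*(a))=(R_1,R_2)$. Assembling the two localization sequences then gives $\K_0(\Delta_1)=\K_0(BG)/(R_0,R_1,R_2)$, the claimed presentation. The hard part will be the clean justification that $i_*=p_*\circ\tilde i_*$ really is the induction-of-the-Koszul-class, that is, verifying the base-change and projection-formula compatibilities that convert the geometric pushforward into the representation-theoretic $\Ind$ of Lemma~\ref{lem.pushforward}; the generation-by-$\{1,a\}$ claim and the sign/duality bookkeeping needed to express the answer in $\epsilon,\lambda^{\pm1},\gamma$ are the secondary points to be checked carefully.
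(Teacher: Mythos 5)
Your proposal is correct and takes essentially the same route as the paper: excise the origin to obtain $R_0=\lambda_{-1}(W_{4,6}^\vee)$, then excise the two punctured lines and identify the image of the pushforward as the ideal generated by the inductions $\Ind\bigl((1-b^{-4})(1-b^{-6})\bigr)$ and $\Ind\bigl(a(1-b^{-4})(1-b^{-6})\bigr)$ computed via Lemma~\ref{lem.pushforward}, giving $R_1$ and $R_2$. The only difference is one of exposition: you spell out the Morita identification $[Z/G]\cong[(L_{4,6}\setminus 0)/T_2]$, the factorization of $i_*$ through the double cover, and the freeness of $R(T_2)$ over $R(T_2)^{S_2}$ with basis $\{1,a\}$, all of which the paper delegates to the argument of \cite[Section 8]{Lar:21}.
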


\begin{cor}\label{cor.tor.delta_1}
The Grothendieck ring $\K_0(\Delta_1)$ is a free abelian group of rank $65$.
\end{cor}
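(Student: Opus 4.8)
The plan is to establish that the explicitly presented ring
\[
\K_0(\Delta_1) = \Z[\epsilon,\lambda^{\pm 1},\gamma^{\pm 1}]/(1 - \gamma^2,\ \epsilon(1 - \gamma),\ R_0,\ R_1,\ R_2)
\]
is a free abelian group and to compute its rank. Since freeness and rank are statements about the underlying abelian group, the first thing I would do is pass to a $\Z$-linear model of the ring. The relation $1-\gamma^2$ forces $\gamma^{-1}=\gamma$ and $\gamma^2=1$, so every element of the presented ring can be written as a $\Z[\epsilon,\lambda^{\pm 1}]$-linear combination of $1$ and $\gamma$; concretely, the quotient is a module over $\Z[\epsilon,\lambda^{\pm 1}]$ generated by $1$ and $\gamma$, and the relation $\epsilon(1-\gamma)=0$ together with $R_0,R_1,R_2$ cuts this down further. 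I would therefore rewrite the ideal so that $\K_0(\Delta_1)$ is presented as a cokernel of an explicit matrix with entries in $\Z[\epsilon,\lambda^{\pm 1}]$, where $\lambda$ is a unit.

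\emph{First I would} set up the computation as a Gröbner/normal-form problem over the Laurent polynomial ring $\Z[\epsilon,\lambda^{\pm 1}]$. Because the only nonunit generator is $\epsilon$ and $\lambda$ is invertible, the natural monomial basis candidates are the powers $\epsilon^i$ (for the $1$-component) and $\epsilon^j\gamma$ (for the $\gamma$-component), each scaled by integer powers of $\lambda$. The relations $R_0,R_1,R_2$, once expanded via Proposition~\ref{prop.recur} and Proposition~\ref{prop.duals} into the generators $\epsilon,\lambda^{\pm 1},\gamma$, are polynomials whose leading $\epsilon$-powers (after clearing $\lambda$-denominators) should reduce high powers of $\epsilon$ to lower ones, bounding the basis to finitely many monomials. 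I would compute a reduced Gröbner basis of the ideal in a suitable term order and read off the standard monomials; the claim that the quotient is \emph{torsion-free} of rank exactly $65$ amounts to showing that the staircase of standard monomials has $65$ elements and that no integer-coefficient relation survives to introduce torsion.

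\emph{The cleanest way} to certify both the torsion-freeness and the rank simultaneously is to verify them over $\Q$ and over $\Z$ and compare. Over $\Q$, the ring $\K_0(\Delta_1)_\Q$ is a finite-dimensional $\Q$-vector space whose dimension I would compute directly (e.g.\ with the \texttt{basis} command in Macaulay2, exactly as was done for $\K_0(\M_2)$ in the proof of Theorem~\ref{thm-M2}); this gives an upper bound $\dim_\Q \K_0(\Delta_1)_\Q = 65$ and hence $\rank_\Z \K_0(\Delta_1) \le 65$. To rule out torsion, I would exhibit an explicit set of $65$ monomials and check that they form a basis \emph{both} over $\Q$ \emph{and} over $\Z$ (the latter by verifying that the Macaulay2 \texttt{basis} over $\Z$ returns the same $65$ elements, i.e.\ that the cokernel presentation matrix has elementary divisors all equal to $1$). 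Agreement of the $\Z$-basis and the $\Q$-basis forces the quotient to be free of the claimed rank.

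\textbf{The main obstacle} I anticipate is not the rank count over $\Q$, which is a routine linear-algebra/Gröbner computation, but the torsion-freeness: a priori the presentation matrix could have a Smith normal form with entries $>1$, producing torsion exactly where the Chow-group analogue does have torsion. The heart of the argument is therefore to verify that the single integral Smith normal form computation yields all invertible pivots, or equivalently that the $65$ chosen standard monomials span over $\Z$ and not merely over $\Q$. This is precisely the point where the $\K$-theory behaves better than the Chow ring—as the introduction notes, torsion relations get ``exponentiated'' into unit relations like $1-\gamma^2=0$—so I would expect the computation to confirm freeness, but it must be carried out over $\Z$ and cannot be inferred from the $\Q$-calculation alone.
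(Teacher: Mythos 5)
Your proposal is correct and is essentially the paper's own argument: the paper likewise uses the Macaulay2 \texttt{basis} command on the presentation of Theorem~\ref{thm.delta_1} to verify that $\K_0(\Delta_1)$ is generated over $\Z$ by the $65$ monomials listed in Appendix~\ref{app.Zbasis}, and then checks that these same elements form a basis of $\K_0(\Delta_1)\otimes_\Z\Q$, which forces freeness of rank $65$. Your Gr\"obner/Smith-normal-form framing is just a more detailed account of the same $\Z$-versus-$\Q$ comparison, and your emphasis that torsion-freeness cannot be inferred from the $\Q$-computation alone is exactly the point the paper's two-step check addresses.
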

\begin{proof}
Once again using $\tt{basis}$ command in Macaulay2 \cite{M2} 
we checked that $\K_0(\Delta_1)$ is generated as an abelian group
by the $65$ generators listed in Appendix~\ref{app.Zbasis}.
Again we checked that these generators form a basis for  the vector space
$\K_0(\Delta_1) \otimes_\Z \Q$ which implies that
$\K_0(\Delta_1)$ is free abelian of rank 65.
Note that, the
same rank result can also be obtained via the Riemann-Roch isomorphism
by computing the rational Chow group 
$\Chow^*(I\Delta_1)_\Q$ of the inertia stack $I \Delta_1$.
\end{proof}

The Hodge bundle $\E$ restricted to $\Delta_1$ has fiber
over $[C] \in \Delta$ where $C = E_1 \cup_p E_2$ given by
\[H^0(\omega_{E_1}) \oplus H^0(\omega_{E_2}),\]
with $H^0(\omega_E) = \langle dx/y \rangle$.
Thus we can identify the pullback of $\E$ to $\K_0(\Delta_1)$ with
the pullback of the bundle $W_1$ in $\K_0(BG)$ \cite[cf. Section 3.1]{Lar:21},
which is indeed consistent with the notation used for class $\lambda$.
The following is a $\K$-theoretic analogue of  \cite[Lemma 6.1]{Lar:21}. 

\begin{lem}\label{lemma.nonzerodivisor}
  We have $[\O_{\Delta_1}(\Delta_1)] = \gamma\lambda^{-1}$ in $\K_0(\Delta_1)$.
\end{lem}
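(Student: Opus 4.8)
The plan is to identify the line bundle $\O_{\Delta_1}(\Delta_1)$ with the normal bundle $N_{\Delta_1/\Mbar_2}$ (the standard adjunction $\O_X(D)|_D\cong N_{D/X}$ for a Cartier divisor) and then to compute it as a $G$-equivariant line bundle under the presentation $\Delta_1\cong (W_{4,6}\setminus\cdots)/G$ of \cite{Lar:21}. The key structural observation is that any $G$-equivariant line bundle on an open subset of a representation is pulled back from $BG$, so its class is forced to be a \emph{line element} of $R(G)$. The line elements are exactly the characters of $G$, namely $\lambda^k$ and $\gamma\lambda^k$ for $k\in\Z$ (the two extensions across $\sigma$ of the $S_2$-invariant character $(ab)^k$ of $T_2$). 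Thus the entire content of the lemma reduces to pinning down one integer $k$ and one $S_2$-sign.

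To determine the character I would run the deformation theory of nodal curves, exactly as in the Chow-theoretic \cite[Lemma 6.1]{Lar:21}. Over a point $[C]=[E_1\cup_p E_2]$ of $\Delta_1$, smoothing the separating node identifies the fiber of $N_{\Delta_1/\Mbar_2}$ with the tensor product $T_pE_1\otimes T_pE_2$ of the tangent lines of the two branches at the node. On $\Mbar_{1,1}$ the Hodge line and the cotangent line at the marked point coincide, so $T_pE_i\cong H^0(\omega_{E_i})^\vee$. Since $\E|_{\Delta_1}=W_1$ has fiber $H^0(\omega_{E_1})\oplus H^0(\omega_{E_2})$, the two tangent lines assemble into $W_1^\vee=W_{-1}$, whose $T_2$-weights are $a^{-1}$ and $b^{-1}$. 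Hence the $T_2$-character of the normal line is $a^{-1}b^{-1}=(ab)^{-1}$, which forces $k=-1$.

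The main obstacle, and the only genuinely delicate point, is fixing the $S_2$-sign, i.e. deciding whether $[N]$ equals $\lambda^{-1}$ or $\gamma\lambda^{-1}$; the $T_2$-weight alone cannot detect the $\gamma$-twist. I would resolve this by locating the normal line inside the weight-$(ab)^{-1}$ part of $W_{-1}^{\otimes 2}=\Sym^2 W_{-1}\oplus\wedge^2 W_{-1}$ and arguing that it is the \emph{symmetric} copy. Indeed the involution $\sigma\in G$ swaps the two components $E_1\leftrightarrow E_2$, hence swaps the tensor factors via $u_1\otimes u_2\mapsto u_2\otimes u_1$, which acts by $+1$ on the one-dimensional line $T_pE_1\otimes T_pE_2$. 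By contrast the antisymmetric copy $\wedge^2 W_{-1}=[\wedge^2 W_1^\vee]=\lambda^{-1}$ carries the sign representation on $\sigma$. Since the symmetric and antisymmetric weight-$(ab)^{-1}$ lines differ precisely by the factor $\gamma$, I conclude that $[\O_{\Delta_1}(\Delta_1)]=[N_{\Delta_1/\Mbar_2}]=\gamma\lambda^{-1}$, the whole subtlety being the careful bookkeeping of how the component-swapping automorphism encoded by the $S_2$ in $G=T_2\rtimes S_2$ acts on this tensor line.
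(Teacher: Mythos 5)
Your proposal is correct and follows essentially the same route as the paper: both identify $\O_{\Delta_1}(\Delta_1)$ with the normal bundle, whose fiber is $H^0(\omega_{E_1})^\vee \otimes H^0(\omega_{E_2})^\vee$, and then pin down the $S_2$-twist by noting that the component-swapping involution acts by $+1$ on this tensor line. Your symmetric-versus-antisymmetric bookkeeping in $W_{-1}^{\otimes 2}$ is exactly the content of the paper's relation $\wedge^2 W_1 = \det \E \otimes \Gamma$ (the wedge line and the naive tensor line differ by $\Gamma$), so both arguments conclude $[\O_{\Delta_1}(\Delta_1)] = \gamma\lambda^{-1}$ in the same way.
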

\begin{proof}
  The normal bundle $N_{\Delta_1}\Mbar_2 = \O_{\Delta_1}(\Delta_1)$ has fiber over
  a point $[C] \in \Delta_1$ given by
  $H^0(\omega_{E_1})^\vee \otimes H^0(\omega_{E_2})^\vee$. Since $\wedge^2 W_1 = \det \E \otimes \Gamma$, we have that
\[N_{\Delta_1}\Mbar_2 = \wedge^2 W_1^\vee \otimes \Gamma\]
as $\Gamma$ is self-dual in $\K_0(\Delta_1)$. Therefore we get
\[[\O_{\Delta_1}(\Delta_1)] = \gamma\lambda^{-1}. \qedhere\]
\end{proof}
Using Lemma \ref{lemma.nonzerodivisor} we can conclude that
$\K_0(\Delta_1)$ is generated by classes pulled back from $\Mbar_2$. Precisely we have the following presentation for $\K_0(\Delta_1)$.
\begin{cor} \label{cor.delta_1}
  The Grothendieck ring of $\Delta_1$ has the following presentation
  $$ \K_0(\Delta_1) = \Z[\epsilon, \lambda^{\pm 1}, \delta_1^{\pm 1}]/I_{\Delta_1}.$$
  Here $\epsilon$ is the pullback of the Hodge bundle,
  and $\delta_1$ is the pullback of the line bundle $\delta_1$ from  $\Mbar_2$
and
$$I_{\Delta_1} = \left(1 -(\lambda \delta_1)^2, \epsilon(1 - \lambda \delta_1),
R_0(\epsilon, \lambda, \lambda \delta_1), R_1(\epsilon, \lambda, \lambda \delta_1), R_2(\epsilon, \lambda, \lambda \delta_1)\right).$$
\end{cor}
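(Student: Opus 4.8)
The plan is to derive the presentation in Corollary~\ref{cor.delta_1} from the presentation in Theorem~\ref{thm.delta_1} by a single invertible change of variables, trading the intrinsic generator $\gamma$ for the pullback $\delta_1$ of the boundary line bundle. The starting point is the identity $[\O_{\Delta_1}(\Delta_1)] = \gamma\lambda^{-1}$ established in Lemma~\ref{lemma.nonzerodivisor}. Writing $\delta_1$ for the class in $\K_0(\Delta_1)$ of the pullback of the boundary line bundle from $\Mbar_2$, whose restriction to $\Delta_1$ is $\O_{\Delta_1}(\Delta_1)$, this reads $\delta_1 = \gamma\lambda^{-1}$ inside $\Z[\epsilon, \lambda^{\pm 1}, \gamma^{\pm 1}]/(1 - \gamma^2, \epsilon(1-\gamma), R_0, R_1, R_2)$.

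First I would verify that $\delta_1$ is a unit, so that it is a legitimate invertible generator. Since $\lambda$ is already inverted and $\gamma$ is a unit—the relation $1 - \gamma^2 = 0$ gives $\gamma^{-1} = \gamma$—the element $\delta_1 = \gamma\lambda^{-1}$ is a unit with inverse $\gamma\lambda$, and conversely $\gamma = \lambda\delta_1$. Consequently the ring homomorphism
\[\Phi \colon \Z[\epsilon, \lambda^{\pm 1}, \gamma^{\pm 1}] \longrightarrow \Z[\epsilon, \lambda^{\pm 1}, \delta_1^{\pm 1}], \qquad \epsilon \mapsto \epsilon,\ \lambda \mapsto \lambda,\ \gamma \mapsto \lambda\delta_1,\]
is an isomorphism of Laurent polynomial rings, with inverse determined by $\delta_1 \mapsto \gamma\lambda^{-1}$.

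Next I would transport the defining ideal along $\Phi$. Because $\Phi$ is an isomorphism, it carries the quotient ring of Theorem~\ref{thm.delta_1} isomorphically onto the quotient of $\Z[\epsilon, \lambda^{\pm 1}, \delta_1^{\pm 1}]$ by the ideal generated by the images $\Phi(1 - \gamma^2)$, $\Phi(\epsilon(1-\gamma))$, and $\Phi(R_i)$ for $i = 0, 1, 2$. Substituting $\gamma = \lambda\delta_1$ into each generator produces $1 - (\lambda\delta_1)^2$, $\epsilon(1 - \lambda\delta_1)$, and $R_i(\epsilon, \lambda, \lambda\delta_1)$, which is precisely the ideal $I_{\Delta_1}$, giving the claimed presentation.

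The computation is routine; the only substantive point is the invertibility of $\delta_1$, for which both the relation $1 - \gamma^2$ and the inversion of $\lambda$ are essential—without them the substitution would only yield a surjection and could collapse the ring. Conceptually, the content of the corollary is that $\K_0(\Delta_1)$ is generated by classes pulled back from $\Mbar_2$, namely $\epsilon$, $\lambda$, and $\delta_1$, rather than the intrinsic class $\gamma$ arising from the sign representation of $S_2$; this reformulation is exactly what makes $I_{\Delta_1}$ usable in the subsequent localization-sequence computation of $\K_0(\Mbar_2)$.
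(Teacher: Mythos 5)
Your proposal is correct and follows exactly the route the paper intends: the corollary is deduced from Theorem~\ref{thm.delta_1} via Lemma~\ref{lemma.nonzerodivisor}, by the invertible change of variables $\gamma = \lambda\delta_1$ in the Laurent polynomial ring, which carries the defining ideal onto $I_{\Delta_1}$. Your write-up simply makes explicit the substitution argument the paper leaves implicit, including the (correct) observation that invertibility of $\lambda\delta_1$ and $\gamma\lambda^{-1}$ is what makes the substitution an isomorphism rather than merely a surjection.
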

\begin{remark}
  Since $\K_0(\Delta_1)$ is generated by pullbacks of classes in $\K_0(\Mbar_2)$
  the projection formula implies that the pushforward $i_* \colon \K_0(\Delta_1) \to \K_0(\Mbar_2)$ is multiplication by $i_*([{\mathcal O}_{\Delta_1}]) =
  1 -\delta_1^{-1}$ where the equality comes from the short exact
  sequence
  $$ 0 \to {\mathcal O}_{\Mbar_2}(-\Delta_1) \to {\mathcal O}_{\Mbar_2} \to {\mathcal O}_{\Delta_1} \to 0.$$
\end{remark}

\subsection{Computation of $\K_0(\Mbar_2 \setminus \Delta_1$)}
As observed in \cite[Section 3]{Lar:21} the stack
$\Mbar_2 \setminus \Delta_1$ can be identified with the
stack $[(V_6 \setminus \Theta)/ \GL_2]$.
were $V_6$ is the vector space of binary sextics and $\Theta$ is the locus of sextics with triple roots. Here $\GL_2$ acts on $V_6$ by 
  $A \cdot f(x) = \det(A)^2 f(A^{-1}x)$.
We can compute $\K_0(\Mbar_2 \setminus \Delta_1)$
using the same technique we used to compute $\K_0(\M_2)$ by passing to the projectivization and restricting to the action of the maximal torus.

\begin{lem}\label{lem.compl.surj}
Let $t$ denote the hyperplane class $[\O_{\Pro^6}(1)]$ on $\Pro^6$. Then 
\[\K_0(\Pro^6 \setminus \Theta,\GL_2) \to \K_0(\Mbar_2 \setminus \Delta_1)\]
is surjective with kernel given by $1 - te_2^{-2} = 0$.
\end{lem}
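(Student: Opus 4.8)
The plan is to repeat verbatim the opening step of the computation of $\K_0(\M_2)$, since the presentation $\Mbar_2\setminus\Delta_1 \cong [(V_6\setminus\Theta)/\GL_2]$ carries \emph{exactly} the same $\GL_2$-action $A\cdot f = \det(A)^2 f(A^{-1}x)$ on binary sextics that appears in Vistoli's presentation of $\M_2$. First I would observe that $\Theta$ is a $\G_m$-invariant cone containing the origin (a triple root is preserved under scaling and $0\in\Theta$), so the scaling action of $\G_m$ on $V_6$ restricts to $V_6\setminus\Theta$ and realizes the quotient map $V_6\setminus\Theta \to \Pro^6\setminus\Theta$ as a $\G_m$-torsor, $\GL_2$-equivariantly. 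Concretely this torsor is the complement of the zero section in the restriction to $\Pro^6\setminus\Theta$ of the tautological bundle $\O(-1)$, equipped with the $\GL_2$-equivariant structure inherited from the action on $V_6$; exactly as in the $\M_2$ case this equivariant line bundle is $L = \O(-1)\otimes\mathcal{D}^{\otimes 2}$, so that $[L^\vee] = t e_2^{-2}$. Tensoring by the character $\mathcal{D}^{\otimes 2}$ does not change the underlying bundle, so the identification of the zero-complement of $L$ over $\Pro^6\setminus\Theta$ with $V_6\setminus\Theta$ (the cone over projective $\Theta$ being exactly the affine $\Theta$) is straightforward.

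Next I would apply the localization sequence of Proposition \ref{prop.longexact} to the zero section $i\colon \Pro^6\setminus\Theta \hookrightarrow L$, a $\GL_2$-equivariant regular closed embedding of codimension one whose open complement is $V_6\setminus\Theta$, yielding
\[\K_0(\Pro^6\setminus\Theta,\GL_2) \xrightarrow{i_*} \K_0(L,\GL_2) \to \K_0(V_6\setminus\Theta,\GL_2) \to 0.\]
Homotopy invariance identifies the bundle pullback $\pi^*\colon \K_0(\Pro^6\setminus\Theta,\GL_2) \xrightarrow{\sim} \K_0(L,\GL_2)$, and under this identification the self-intersection formula of Proposition \ref{prop.selfintersection} shows that $i_*$ is multiplication by $\lambda_{-1}(L^\vee) = 1 - [L^\vee] = 1 - t e_2^{-2}$, the normal bundle of the zero section being $L$ itself. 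Hence the composite map $\K_0(\Pro^6\setminus\Theta,\GL_2) \to \K_0(V_6\setminus\Theta,\GL_2)$ is surjective with kernel the ideal generated by $1 - t e_2^{-2}$, and the quotient presentation gives $\K_0(\Mbar_2\setminus\Delta_1) = \K_0(V_6\setminus\Theta,\GL_2)$ by definition. This yields the stated surjectivity and kernel.

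The only point requiring genuine care is pinning down the correct $\GL_2$-equivariant structure on the tautological bundle, and thereby verifying that $[L^\vee] = t e_2^{-2}$ rather than some other $\det$-twist: the factor $\det(A)^2$ in the action twists the naive class $t = [\O(1)]$ by $e_2^{-2}$. Because the representation $V_6$ and the action are \emph{identical} to those already analyzed for $\M_2$, this bookkeeping is the same as in that computation and introduces no new difficulty; the lemma is in effect a restatement of the opening step of the proof of Theorem \ref{thm-M2} with $\Delta$ replaced by $\Theta$.
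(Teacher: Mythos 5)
Your proposal is correct and matches the paper's (implicit) argument: the paper proves this lemma exactly as in the opening step of the $\K_0(\M_2)$ computation, identifying $V_6\setminus\Theta$ with the complement of the zero section of $\O(-1)\otimes\mathcal{D}^{\otimes 2}$ over $\Pro^6\setminus\Theta$, so that the localization sequence plus homotopy invariance and the self-intersection formula exhibit the kernel as the ideal generated by the Euler class $1-te_2^{-2}$. Your write-up simply makes explicit the standard torsor argument that the paper leaves implicit, including the correct $\det^2$-twist of the equivariant structure on the tautological bundle.
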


As in Section \ref{sec.chowenv} we can use
an equivariant Chow envelope to compute the image of $\GG_0(\Theta,T_2)$ in
$\K_0(\Pro^6, T_2)$. In this case a $\GL_2$ and hence $T_2$-equivariant
Chow envelope for $\Theta$ is
given by 
\[\tau : \coprod_{r = 1}^2 \Pro^r \times \Pro^{6 - 3r} \to \Theta.\]
We denote by $\tau_r$ the restriction of $\tau$ to $\Pro^r \times \Pro^{6 - 3r}$.
Using the $T_2$-equivariant localization technique of Section \ref{subsec.pushforward}
with the cube map replacing the square map we find using SymPy \cite{sympy} that the 
ideal generated by the image of the pushforward
\[\K_0(\Theta,T_2) \to \K_0(\Pro^6,T_2)\]
is generated
by 
\begin{align*}
& (\tau_1)_*(x_{1,1}^0) = t^{-3} e_2^{4} h_{10} - t^{-2} e_2^{2} h_2h_6 + t^{-1} e_2 h_2^2 + 1, \\
& (\tau_1)_*(x_{1,1}^1) = t^{-3}e_2^5 h_9 - t^{-2} e_2^3 h_2h_5 + t^{-1} e_2^2 e_1h_2 + e_1,
\end{align*}
and
\[(\tau_2)_*(x_{2,1}^0) = t^{-6} e_2^{18} + t^{-5} e_2^{14} h_2 - t^{-4} e_2^8 h_2h_6 + t^{-3} e_2^4 (e_1h_9 + e_2^3h_4 + e_2^3h_2^2) - t^{-2} e_2^2 h_2h_6 + t^{-1} e_2^2 h_2 + 1,\]
where $t$ denotes the class of $\O(1)$ in $\K_0(\Pro^6,T_2)$,
and $h_n$ denotes the the complete homogeneous polynomial of degree $n$
in terms of $a,b$, the classes of characters $\chi_1,\chi_2$ of the torus $T_2$,
together with $e_1 = a + b$ and $e_2 = ab$.
Note that $h_n$ can be recursively expressed by
the elementary symmetric polynomials $e_1,e_2$.
In addition the relation in $\K_0(\Pro^6,T_2)$ is also contained in this ideal.
Combining with the relation $t = e_2^2$ from Lemma \ref{lem.compl.surj}, we set
\begin{equation} \label{eq.esses}
\begin{split}
& S_{10} \coloneq e_2^{-2}h_{10} - e_2^{-2}h_2h_6 + e_2^{-1}h_2^2 + 1, \\
& S_{11} \coloneq e_2^{-1}h_9 - e_2^{-1}h_2h_5 + e_1h_2 + e_1, \\
& S_{20} \coloneq e_2^6 + e_2^4h_2 - h_2h_6 + e_2^{-2}(e_1h_9 + e_2^3h_4 + e_2^3h_2^2) - e_2^{-2}h_2h_6 + h_2 + 1.
\end{split}
\end{equation}
Using an argument similar to the one in Section \ref{sec.chowenv},
we obtain the following presentation for the Grothendieck ring $\K_0(\Mbar_2\setminus \Delta_1)$.

\begin{thm}\label{thm.complidelta_1}
The Grothendieck ring of the complement $\Mbar_2 \setminus \Delta_1$ is given by
\[\K_0(\Mbar_2\setminus \Delta_1) = \Z[e_1,e_2^{\pm 1}]/(S_{10},S_{11},S_{20})\]
where the relations $S_{10},S_{11}$ and $S_{20}$ are defined as above.
\end{thm}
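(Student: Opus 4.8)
The plan is to transcribe the argument of Section~\ref{sec.chowenv} almost verbatim, replacing the discriminant $\Delta$ by the triple-root locus $\Theta$ and the squaring map by the cubing map. By Lemma~\ref{lem.compl.surj} we have $\K_0(\Mbar_2\setminus\Delta_1) = \K_0(\Pro^6\setminus\Theta,\GL_2)/(1 - te_2^{-2})$, so by the localization sequence
\[\GG_0(\Theta,\GL_2) \to \K_0(\Pro^6,\GL_2) \to \K_0(\Pro^6\setminus\Theta,\GL_2) \to 0\]
it suffices to identify the ideal $J_{\GL_2}\subset\K_0(\Pro^6,\GL_2)$ generated by the image of $\GG_0(\Theta,\GL_2)$, then impose $t = e_2^2$ and set $\epsilon = [\E] = e_1$, $\lambda = [\det\E] = e_2$.

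As in Section~\ref{sec.chowenv} I would first pass to the maximal torus, using that $R(\GL_2)\hookrightarrow R(T_2)$ is a split injection of $R(\GL_2)$-modules \cite[Lemma 4.1]{EdGr:00}, so that $\K_0(\Pro^6,\GL_2)$ is a summand of $\K_0(\Pro^6,T_2)$. The equivariant Chow envelope $\tau\colon\coprod_{r=1}^2 \Pro^r\times\Pro^{6-3r}\to\Theta$ makes $\tau_*$ surjective onto $\K_0(\Theta,T_2)$ by \cite[Proposition 2.2]{AnPa:15}, so the torus-level image $J_{T_2}$ is the ideal generated by the classes $(\tau_r)_*(x_{r,1}^k)$ for $0\le k\le r$. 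Since $\tau_r^*(t) = x_{r,1}^3 x_{6-3r,2}$, these are computed by the localization method of Section~\ref{subsec.pushforward} with the cube map in place of the square map. After substituting $t = e_2^2$, the named pushforwards $(\tau_1)_*(x_{1,1}^0)$, $(\tau_1)_*(x_{1,1}^1)$ and $(\tau_2)_*(x_{2,1}^0)$ become the generators $S_{10},S_{11},S_{20}$ of \eqref{eq.esses}, while the remaining pushforwards $(\tau_2)_*(x_{2,1}^1)$ and $(\tau_2)_*(x_{2,1}^2)$ give two further elements of $R(\GL_2)$.

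The key computational step, which I would verify with SymPy exactly as in the $\M_2$ case, is that these two extra elements and the defining projective-bundle relation of $\K_0(\Pro^6,T_2)$ all lie in the ideal $(S_{10},S_{11},S_{20})$; this shows $J_{T_2} = (S_{10},S_{11},S_{20})$. I would then descend to $\GL_2$ by the splitting argument of Section~\ref{sec.chowenv}: since $S_{10},S_{11},S_{20}\in R(\GL_2)$, any $f\in R(\GL_2)$ lying in $J_{T_2}$ can be written $f = \sum_i p_i S_i$ with $p_i\in R(T_2)$; writing $p_i = p_i^0 + p_i^1$ with $p_i^0\in R(\GL_2)$ and $p_i^1$ in a complementary $R(\GL_2)$-submodule forces $\sum_i p_i^1 S_i = 0$, so $f$ lies in the $R(\GL_2)$-ideal $(S_{10},S_{11},S_{20})$. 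Since the three generators are themselves pushforwards of $\GL_2$-equivariant classes along the $\GL_2$-equivariant envelope $\tau$, the reverse inclusion is immediate, giving $J_{\GL_2} = (S_{10},S_{11},S_{20})$.

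The main obstacle is the finite but intricate SymPy verification that three generators suffice, that is, that the two higher pushforwards from the $r=2$ component and the projective-bundle relation are redundant modulo $(S_{10},S_{11},S_{20})$. Once this is confirmed, the remaining steps are a direct transcription of the $\M_2$ argument and yield the stated presentation $\K_0(\Mbar_2\setminus\Delta_1) = \Z[e_1,e_2^{\pm 1}]/(S_{10},S_{11},S_{20})$.
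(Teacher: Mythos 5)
Your proposal is correct and follows essentially the same route as the paper: Lemma~\ref{lem.compl.surj}, the localization sequence for $\Theta$, the equivariant Chow envelope $\coprod_{r=1}^2 \Pro^r\times\Pro^{6-3r}\to\Theta$ with the cube-map localization computation, the SymPy check that the higher pushforwards and the projective-bundle relation are redundant, and the split-injection argument to descend from $T_2$ to $\GL_2$. The only difference is expository: you spell out explicitly which extra pushforwards from the $r=2$ component must be checked redundant, which the paper leaves implicit in its ``all other pushforwards'' remark.
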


\begin{cor}\label{cor.tor.complidelta_1}
The Grothendieck ring $\K_0(\Mbar_2 \setminus \Delta_1)$ is a free abelian group of rank $32$.
\end{cor}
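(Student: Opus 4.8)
Corollary~\ref{cor.tor.complidelta_1} asserts that $\K_0(\Mbar_2\setminus\Delta_1)$ is a free abelian group of rank $32$. We have just established in Theorem~\ref{thm.complidelta_1} the ring presentation
\[
\K_0(\Mbar_2\setminus\Delta_1) = \Z[e_1,e_2^{\pm1}]/(S_{10},S_{11},S_{20}),
\]
so the task is purely one of understanding this quotient as an abelian group.

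**The plan.** The approach mirrors exactly the verification already carried out for $\K_0(\M_2)$ in the proof of Theorem~\ref{thm-M2} and for $\K_0(\Delta_1)$ in Corollary~\ref{cor.tor.delta_1}. First I would feed the three relations $S_{10}, S_{11}, S_{20}$ from \eqref{eq.esses} into Macaulay2, expanding the complete homogeneous polynomials $h_n$ recursively in terms of $e_1,e_2$ so that the ideal $(S_{10},S_{11},S_{20})$ lives explicitly in the Laurent polynomial ring $\Z[e_1,e_2^{\pm1}]$. Using the $\tt{basis}$ command over $\Z$, I would exhibit an explicit list of $32$ monomials that generate the quotient as an abelian group; these are recorded in Appendix~\ref{app.Zbasis}. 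Then, running the same $\tt{basis}$ command over the ground field $\Q$, I would check that these same $32$ elements form a $\Q$-vector space basis of $\K_0(\Mbar_2\setminus\Delta_1)\otimes_\Z\Q$, so that the rational dimension is exactly $32$. The equality of the integral generating count with the rational dimension forces the $\Z$-module to be free of rank $32$: a finitely generated abelian group generated by $32$ elements whose rationalization has dimension $32$ can have no torsion and no smaller free rank, hence is $\Z^{32}$.

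**Where the real content lies.** The genuinely nontrivial step is not this final bookkeeping but the computation of the pushforward classes $(\tau_r)_*(x_{r,1}^k)$ feeding into $S_{10},S_{11},S_{20}$, which was handled by the $T_2$-equivariant localization technique of Section~\ref{subsec.pushforward} with the cube map $f^3 g$ replacing the square map, together with the descent argument from $R(T_2)$ to $R(\GL_2)$ exactly as in Section~\ref{sec.chowenv}. Given Theorem~\ref{thm.complidelta_1}, the freeness and rank statement is a mechanical certificate. The one conceptual point worth flagging is that freeness is \emph{not} automatic from having a finite presentation: it is the coincidence of the integral generator count with the rational rank that rules out torsion, and this coincidence is precisely what the two $\tt{basis}$ computations establish.

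**Alternative confirmation.** As an independent check on the rank (though not on freeness), one could verify $\rank\K_0(\Mbar_2\setminus\Delta_1)=32$ via the Riemann--Roch isomorphism of Theorem~\ref{thm.rr}, computing $\dim_\Q \Chow^*(I[\,(V_6\setminus\Theta)/\GL_2]\,)_\Q$ for the inertia stack. This matches the strategy invoked in the remark closing Corollary~\ref{cor.tor.delta_1} and provides reassurance that the Macaulay2 rank count of $32$ is correct.
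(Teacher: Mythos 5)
Your proposal is correct and follows essentially the same route as the paper: the paper's proof of Corollary~\ref{cor.tor.complidelta_1} simply invokes the same Macaulay2 \texttt{basis} computation (over $\Z$ and over $\Q$) used for Corollary~\ref{cor.tor.delta_1}, with the $32$ monomials recorded in Appendix~\ref{app.Zbasis}, and your observation that matching the integral generator count with the rational dimension rules out torsion is exactly the implicit logic there. Your Riemann--Roch cross-check on the rank likewise mirrors the remark closing the paper's proof of Corollary~\ref{cor.tor.delta_1}.
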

\begin{proof}
  The proof is similar to that of Corollary \ref{cor.tor.delta_1} based on the ring structure obtained in Theorem \ref{thm.complidelta_1}.
  A basis for $\K_0(\Mbar_2\setminus\Delta_1)$ is given in Appendix~\ref{app.Zbasis}.
\end{proof}


\section{Proof of Theorem \ref{thm-Mbar2}. The Grothendieck ring of the moduli stack $\Mbar_2$} 
In this section, we conclude with a presentation for the Grothendieck ring $\K_0(\Mbar_2)$.
\subsection{Description of $\K_0(\Mbar_2)$ as an  abelian group}
\begin{prop}
  The Grothendieck ring $\K_0(\Mbar_2)$ is a free abelian group of rank 97 which fits into a
  short exact sequence
\begin{align}\label{diag.ses}
0 \to \K_0(\Delta_1) \stackrel{i_*}{\longrightarrow} \K_0(\Mbar_2) \stackrel{j^*}{\longrightarrow} \K_0(\Mbar_2\setminus\Delta_1) \to 0.
\end{align}
\end{prop}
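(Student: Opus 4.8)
The plan is to combine the localization sequence of Proposition~\ref{prop.longexact} with the Riemann-Roch isomorphism of Theorem~\ref{thm.rr} to pin down the rank of $\K_0(\Mbar_2)$ and to upgrade the localization sequence to a short exact sequence. We begin with the localization sequence
\[
\K_0(\Delta_1) \stackrel{i_*}{\longrightarrow} \K_0(\Mbar_2) \stackrel{j^*}{\longrightarrow} \K_0(\Mbar_2 \setminus \Delta_1) \to 0,
\]
which is exact by Proposition~\ref{prop.longexact} (using that $\Mbar_2$ is a smooth Deligne-Mumford stack so that $\GG_0 = \K_0$ via Thomason's resolution theorem). From right-exactness alone we immediately get an upper bound
\[
\rank \K_0(\Mbar_2) \le \rank \K_0(\Delta_1) + \rank \K_0(\Mbar_2 \setminus \Delta_1) = 65 + 32 = 97,
\]
using Corollary~\ref{cor.tor.delta_1} and Corollary~\ref{cor.tor.complidelta_1}. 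The content of the proposition is that this bound is sharp and that $i_*$ is injective.

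Next I would compute the rank of $\K_0(\Mbar_2)$ independently. By Theorem~\ref{thm.rr} there is a Riemann-Roch isomorphism $\K_0(\Mbar_2)_\C \stackrel{\tau}{\to} \Chow^*(I\Mbar_2)_\C$, so $\rank \K_0(\Mbar_2) = \rank \Chow^*(I\Mbar_2)$. The plan is to read off $\rank \Chow^*(I\Mbar_2)$ from the description of the inertia stack $I\Mbar_2$, whose components are indexed by conjugacy classes of automorphisms of stable genus-two curves; as indicated in the introduction, the dimensions of the Chow groups of these components can be extracted from Pagani's computation of the Chen-Ruan cohomology of $\Mbar_2$ in \cite{Pag:09}, and the total comes out to $97$. (One could alternatively enumerate the twisted sectors directly, but invoking \cite{Pag:09} is cleaner.)

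Finally I would combine the two facts. Since $\rank \K_0(\Mbar_2) = 97$ equals the sum of the ranks of the two outer terms, and the sequence is exact on the right, the image of $i_*$ must have rank exactly $65 = \rank \K_0(\Delta_1)$; hence $i_*$ has no kernel after tensoring with $\Q$, so $\ker i_*$ is torsion. To conclude genuine injectivity over $\Z$ and that $\K_0(\Mbar_2)$ is \emph{free} abelian, I would argue that $\K_0(\Mbar_2)$ is torsion-free: the right-exact sequence exhibits $\K_0(\Mbar_2)$ as an extension whose quotient $\K_0(\Mbar_2 \setminus \Delta_1)$ is free (hence projective), so the extension splits and $\K_0(\Mbar_2) \cong \im(i_*) \oplus \K_0(\Mbar_2 \setminus \Delta_1)$; since $\im(i_*)$ is a quotient of the free group $\K_0(\Delta_1)$ of rank $65$ but has rank $65$, and sits inside the torsion-free group $\K_0(\Mbar_2)$, it is free of rank $65$, forcing $i_*$ to be injective and the sequence to be short exact.

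The main obstacle I expect is the rank computation for $\Chow^*(I\Mbar_2)$: correctly enumerating the components of the inertia stack (equivalently, the twisted sectors) and summing the ranks of their rational Chow groups requires care, since one must account for every automorphism type of stable genus-two curves and the dimension of each associated sector. The splitting argument in the last step is the delicate algebraic point — one must ensure $\K_0(\Mbar_2 \setminus \Delta_1)$ being free is enough to split the extension and thereby deduce both the injectivity of $i_*$ and the freeness of $\K_0(\Mbar_2)$.
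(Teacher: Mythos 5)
Your strategy is the same as the paper's: the localization sequence gives right-exactness and the bound $\rank \K_0(\Mbar_2) \le 65 + 32 = 97$, Riemann--Roch plus Pagani's computation gives rank exactly $97$, and the two together force $i_*$ to be injective. However, your concluding algebraic step is circular as written: you set out to prove that $\K_0(\Mbar_2)$ is torsion-free, and in the course of that very argument you assert that $\im(i_*)$ ``sits inside the torsion-free group $\K_0(\Mbar_2)$,'' which assumes the conclusion. The repair is both simpler and is exactly what the paper does: having shown $\ker(i_* \otimes \Q) = 0$, note that $\ker i_*$ is a torsion subgroup of $\K_0(\Delta_1)$, which is free abelian by Corollary~\ref{cor.tor.delta_1}, hence torsion-free; therefore $\ker i_* = 0$ outright. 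Only \emph{after} injectivity is established should you invoke the splitting (the quotient $\K_0(\Mbar_2\setminus\Delta_1)$ is free, so the short exact sequence splits) to conclude $\K_0(\Mbar_2) \cong \K_0(\Delta_1) \oplus \K_0(\Mbar_2 \setminus \Delta_1)$ is free of rank $97$; the splitting cannot be used as a tool to prove injectivity itself.

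The rank-$97$ input also has a genuine gap. Theorem~\ref{thm.rr} identifies $\K_0(\Mbar_2)_\C$ with the \emph{Chow} groups $\Chow^*(I\Mbar_2)_\C$ of the inertia stack, whereas Pagani computes the Chen--Ruan cohomology, i.e.\ $H^{2*}(I\Mbar_2)_\Q$. Your claim that the Chow ranks ``can be extracted'' from \cite{Pag:09} is precisely the statement that the cycle map $\Chow^*(I\Mbar_2)_\Q \to H^{2*}(I\Mbar_2)_\Q$ is an isomorphism, and this requires proof. The paper supplies it as a separate lemma by running through the sectors of $I\Mbar_2$: the untwisted sector $\Mbar_2$ (where $\Chow^*(\Mbar_2)_\Q = H^{2*}(\Mbar_2)_\Q$ has rank six), the zero-dimensional stacky sectors, the stacky $\Pro^1$ sectors, and the unique two-dimensional sector with coarse space $\Mbar_{0,5}/S_3$, where one uses $\Chow^*(\Mbar_{0,5}) = H^{2*}(\Mbar_{0,5})$ and passes to $S_3$-invariants. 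You flag this as the ``main obstacle'' but do not resolve it; without this sector-by-sector check (or a direct computation of the Chow groups of the sectors), the equality $\rank \K_0(\Mbar_2)_\Q = 97$ on which your whole argument rests is not established.
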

\begin{proof}
Consider the localization exact sequence
\begin{align*}
\K_0(\Delta_1) \stackrel{i_*}{\longrightarrow} \K_0(\Mbar_2) \stackrel{j^*}{\longrightarrow} \K_0(\Mbar_2\setminus\Delta_1) \to 0.
\end{align*}
Since $\K_0(\Delta_1)$ and $\K_0(\Mbar_2 \setminus \Delta_1)$ are free abelian
of ranks $65$ and $32$ respectively, we know that $\K_0(\Mbar_2)$ has
rank at most $97$. To prove the proposition it suffices
to prove that $\K_0(\Mbar_2)_\Q$ has rank exactly $97$, 
since it implies that $\ker i_*$ must be trivial 
as $\K_0(\Delta_1)$ is free abelian.

By the Riemann-Roch isomorphism for Deligne-Mumford stacks (Theorem \ref{thm.rr})
we know that $\K_0(\Mbar_2)_\Q$ is isomorphic to $\Chow^*(I\Mbar_2)_\Q$
where $I\Mbar_2$ is the inertia stack of $\Mbar_2$.
Thus, we are reduced to computing the rank of $\Chow^*(I\Mbar_2)_\Q$.
This can be done directly but since Pagani \cite{Pag:12, Pag:09} has previously
computed that $H^{2*}(I\Mbar_2)_\Q$ has rank 97, our proposition
follows from the following lemma.
\end{proof}


\begin{lem}\label{prop.RR.Mbar2}
The cycle map
\[\Chow^*(I\Mbar_2)_\Q \longrightarrow H^{2*}(I\Mbar_2)_\Q\]
is an isomorphism.
\end{lem}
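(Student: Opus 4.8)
The plan is to prove the stronger statement that each connected component of $I\Mbar_2$ satisfies the property $(\star)$: its cycle map into rational Borel--Moore homology is an isomorphism and its odd rational Borel--Moore homology vanishes. Since every component is smooth, Poincar\'e duality converts this into the stated isomorphism $\Chow^*(I\Mbar_2)_\Q \to H^{2*}(I\Mbar_2)_\Q$. Because $\Delta_1 \hookrightarrow \Mbar_2$ is a closed immersion and the automorphism group of a stable curve is intrinsic to the curve, pulling the inertia back along $\Delta_1$ and along $\Mbar_2\setminus\Delta_1$ identifies $I\Delta_1$ as a closed substack of $I\Mbar_2$ with open complement $I(\Mbar_2\setminus\Delta_1)$. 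Rational Chow groups and Borel--Moore homology both carry localization sequences compatible with the cycle map, and a five-lemma comparison (using the vanishing of odd homology at the two ends to split the Borel--Moore sequence into short exact sequences in even degree) shows that $(\star)$ for $I\Mbar_2$ follows from $(\star)$ for $I\Delta_1$ and for $I(\Mbar_2\setminus\Delta_1)$.

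Next I would decompose the inertia of each stratum into its twisted sectors. Using the presentations $\Mbar_2\setminus\Delta_1 \cong [(V_6\setminus\Theta)/\GL_2]$ and $\Delta_1 \cong (W_{4,6}\setminus(\cdots))/G$ from \cite{Vis:98, Lar:21}, the inertia of a quotient $[U/H]$ is the disjoint union, over conjugacy classes $[g]$ of finite-order elements $g \in H$, of the quotients $[U^g/C_H(g)]$, where $U^g$ is the fixed locus (open in a linear subspace) and $C_H(g)$ is the centralizer. For $H = \GL_2$ the relevant centralizers are $\GL_2$ and the maximal torus $T_2$; for $H = G = T_2\rtimes S_2$ they are reductive subgroups of $G$. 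In every case $C_H(g)$ is reductive and $U^g$ is an open subset of a representation of $C_H(g)$.

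It therefore suffices to verify $(\star)$ for a single quotient $[U^g/C]$ with $C = C_H(g)$ reductive and $U^g$ open in a representation of $C$. For the ambient representation this is the standard identification $\Chow^*(BC)_\Q = H^{2*}(BC)_\Q$ with vanishing odd cohomology provided by equivariant intersection theory \cite{EdGr:98}; indeed, for $C$ built from general linear groups, tori, and finite groups the rational cohomology of $BC$ is concentrated in even degrees. The property then propagates to the open subset $U^g$ through the localization sequence, because the removed discriminant admits a $C$-equivariant stratification into smooth strata of Tate type. Concretely these are the root-multiplicity strata whose closures are the images of the multiplication maps $\coprod \Pro^r\times\Pro^{6-2r}$ used in Section~\ref{sec.chowenv}, and products of projective spaces are cellular, hence of Tate type, so $(\star)$ is preserved at each step.

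The main obstacle is the sector-by-sector bookkeeping underlying the two previous steps: one must enumerate the conjugacy classes of automorphisms of stable genus-two curves and, for each, identify the fixed locus $U^g$ together with the excised discriminant and confirm its linear structure. This enumeration is precisely what underlies Pagani's computation that $H^{2*}(I\Mbar_2)_\Q$ has rank $97$ \cite{Pag:09, Pag:12}, so I would import his list of sectors and match each with the corresponding $[U^g/C_H(g)]$, reducing the verification to a finite check. Once $(\star)$ is established for every sector, the two compatible localization sequences reassemble to yield that $\Chow^*(I\Mbar_2)_\Q \to H^{2*}(I\Mbar_2)_\Q$ is an isomorphism.
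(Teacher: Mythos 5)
Your top-level strategy (glue the property $(\star)$ along the decomposition $I\Delta_1 \subset I\Mbar_2$ via compatible localization sequences and a five-lemma argument) and your twisted-sector decomposition of the inertia are both sound, but the step where you establish $(\star)$ for an individual sector $[U^g/C]$ fails as stated. Knowing $(\star)$ for the ambient representation $V^g$ and for the removed discriminant $D^g$ (built up from Tate-type strata) only propagates in one direction: closed piece plus open piece implies total space. You are using it in the opposite direction — total space plus closed piece implies open complement — and that implication is false. From the two localization sequences one does get that the cycle map of $U^g = V^g \setminus D^g$ is an isomorphism, but its odd Borel--Moore homology is identified with $\ker\left(\Chow_*^{C}(D^g)_\Q \to \Chow_*^{C}(V^g)_\Q\right)$, which has no reason to vanish. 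A minimal non-equivariant counterexample is $\{0\} \subset \Aff^1$: both pieces are cellular, yet $H^{BM}_1(\G_m;\Q) = \Q \neq 0$, precisely because the pushforward $\Chow_0(\{0\})_\Q \to \Chow_0(\Aff^1)_\Q$ is not injective. Equivariantly such kernels often do vanish (in the example above, $[\G_m/\G_m]$ is a point and the pushforward becomes multiplication by a nonzero Euler class), but verifying this sector by sector is genuine work — it is the rational shadow of the higher-Chow-group analysis Larson needed for $\Chow^*(\Mbar_2)$, which the present paper is explicitly structured to avoid — and your proposal supplies no argument for it. The gap matters downstream: without odd vanishing for the open strata and sectors, the connecting maps in the Borel--Moore sequence need not vanish, $i_*$ on even homology of $I\Delta_1$ need not be injective, and your five-lemma gluing no longer yields injectivity of the cycle map on $I\Mbar_2$.

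For comparison, the paper's proof sidesteps all of this: it takes Pagani's explicit geometric description of the sectors of $I\Mbar_2$ — the full sector $\Mbar_2$, finitely many stacky points, stacky $\Pro^1$'s, and one two-dimensional sector with coarse space $\Mbar_{0,5}/S_3$ — and checks sector by sector that rational Chow and cohomology agree (rank six for $\Mbar_2$, rank one per stacky point, rank two per stacky $\Pro^1$, and $S_3$-invariants of the cellular variety $\Mbar_{0,5}$ for the last one). Since your argument ultimately imports Pagani's sector list anyway, your route is strictly harder than the paper's. If you want to repair it, you would need to prove, for each open sector, the injectivity of the relevant equivariant pushforward (equivalently, the vanishing of its odd rational cohomology), rather than appealing to a Tate-type stratification of the discriminant; alternatively, abandon the equivariant bootstrap and argue on Pagani's sectors directly as the paper does.
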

\begin{proof}
  Pagani's analysis of the inertia $I\Mbar_2$ \cite{Pag:12}
  shows that it is a disjoint
  sum of the 
following sectors:
  \begin{enumerate}
  \item The full sector $\Mbar_2$. On this sector we know that $\Chow^*(\Mbar_2)_\Q = H^{2*}(\Mbar_2)_\Q$ and is a vector space of rank six, so the result
    holds for the full sector.
  \item Stacky points. For such sectors the Chow and cohomology groups have rank one.
  \item Stacky $\Pro^1$'s. On these sectors both the Chow and cohomology groups
    have rank $2$.
  \item A unique two-dimensional sector whose coarse moduli space
    is isomorphic to $\Mbar_{0,5}/S_3$. 
Since $S_3$ is a finite group and $\Chow^*(\Mbar_{0,5}) =  H^{2*}(\Mbar_{0,5})$,
it implies $\Chow^*(\Mbar_{0,5}/S_3)_\Q = H^{2*}(\Mbar_{0,5}/S_3)_\Q$, as both
groups can be identified as the $S_3$-invariants in the Chow or cohomology
group of $\Mbar_{0,5}$. \qedhere
  \end{enumerate}
 \end{proof}

\subsection{A presentation for the ring $\K_0(\Mbar_2)$}
From the short exact sequence \eqref{diag.ses},
the ring $\K_0(\Mbar_2)$ is generated by classes which pullback to generators
for $\K_0(\Mbar_2\setminus\Delta_1)$
together with the pushforwards of the generators for $\K_0(\Delta_1)$.
We know that $\K_0(\Mbar_2\setminus\Delta_1)$, is generated by $\{e_1,e_2^{\pm 1}\}$ which are just the pullbacks
of the Hodge bundle $\epsilon$ and $\lambda^{\pm 1}$ respectively.
Also any class in the image of $\K_0(\Delta_1)$ can be expressed
as $(1-\delta_1^{-1})p(\epsilon, \lambda^{\pm 1}, \delta_1^{\pm 1})$.
Therefore, any element in $\K_0(\Mbar_2)$ can be expressed as polynomial in
$\epsilon, \lambda^{\pm 1}, \delta_1^{\pm 1}$.
Let $A = \Z[\epsilon,\lambda^{\pm 1},\delta_1^{\pm 1}]$,
and consider the surjective map $\pi : A \to \K_0(\Mbar_2)$, 
which fits into the following diagram of $A$-modules.

\[\begin{tikzcd}
0 \ar[r]  & \ker\pi_1 \ar[r,"i_*",shift left = 1.8pt] \ar[d,hook] & \ker\pi \ar[l,"i^*",shift left = 1.8pt]\ar[r,"j^*"]\ar[d,hook] & \ker \pi_2 \ar[d,hook] & \\
0 \ar[r] & A \ar[r,"i_*",shift left = 1.8pt] \ar[d,"\pi_1"] & A \ar[l,"i^*",shift left = 1.8pt]\ar[d,"\pi"] \ar[r,"j^*"] & A/\left(1 - \delta_1^{-1}\right) \ar[r]\ar[d,"\pi_2"] & 0 \\
0 \ar[r] & \K_0(\Delta_1) \ar[d] \ar[r,"i_*",shift left = 1.8pt] & \K_0(\Mbar_2) \ar[d]\ar[r,"j^*"]\ar[l,"i^*",shift left = 1.8pt] & \K_0(\Mbar_2\setminus\Delta_1) \ar[r]\ar[d] & 0 \\
 & 0 & 0 & 0 &
\end{tikzcd}\]
Note the map $i_* \colon A \to A$ is multiplication by $(1-\delta_1^{-1})$
and the pullback $i^* \colon A \to A$ is the identity.
Also note that the projection $j^* \colon A \to A/(1-\delta_1^{-1}) = \Z[\epsilon, \lambda^{\pm 1}]$
has a section given by the inclusion $s \colon \Z[\epsilon, \lambda^{\pm 1}]
  \hookrightarrow \Z[\epsilon, \lambda^{\pm 1}, \delta_1^{\pm 1}]$
  and we let $I_{\Mbar_2 \setminus \Delta_1}$ be the ideal in $A$ generated
  by the image of $\ker \pi_2$. This ideal is generated by
  the polynomials $S_{10}, S_{11}, S_{20}$ defined in \eqref{eq.esses}
  viewed as polynomials in $A$.

\begin{prop}\label{prop.kernel}
The relations in the Grothendieck ring $\K_0(\Mbar_2)$,
\ie the kernel $\ker\pi$, is the ideal
$(I_{\Delta_1} \cap I_{\Mbar_2\setminus\Delta_1}) + \left(1 - \delta_1^{-1}\right)I_{\Delta_1}$. 
\end{prop}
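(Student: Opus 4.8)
The plan is to determine $\ker\pi$ by chasing the commutative diagram of $A$-modules displayed above, whose rows and columns are exact. The strategy exploits the fact that the middle column is a short exact sequence $0 \to \ker\pi \to A \xrightarrow{\pi} \K_0(\Mbar_2) \to 0$, while the bottom row is the localization sequence \eqref{diag.ses} and the top row is the induced sequence on kernels. First I would establish that the top row is exact: since $i^* \colon A \to A$ is the identity, it splits the map $i_* \colon A \to A$ (multiplication by $1-\delta_1^{-1}$) on the nose, and the snake lemma applied to the three vertical short exact sequences gives exactness of $0 \to \ker\pi_1 \to \ker\pi \to \ker\pi_2$, with the rightmost map $j^*$ surjective because the section $s$ lifts $\ker\pi_2$ into $A$.

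The heart of the argument is to identify the two obvious ``sources'' of relations in $\ker\pi$. On one hand, any relation coming from $\Mbar_2\setminus\Delta_1$ pulls back into $A$ via the ideal $I_{\Mbar_2\setminus\Delta_1} = (S_{10},S_{11},S_{20})$; on the other hand, by Corollary~\ref{cor.delta_1} the relations on $\Delta_1$ are encoded by $I_{\Delta_1}$, and since the pushforward $i_*$ is multiplication by $1-\delta_1^{-1}$ (as noted in the remark following Corollary~\ref{cor.delta_1}), any relation of $\K_0(\Delta_1)$ contributes to $\ker\pi$ after multiplication by $1-\delta_1^{-1}$, giving the summand $(1-\delta_1^{-1})I_{\Delta_1}$. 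I would show directly that both $(1-\delta_1^{-1})I_{\Delta_1}$ and $I_{\Delta_1}\cap I_{\Mbar_2\setminus\Delta_1}$ are contained in $\ker\pi$: the first because $i_*\circ\pi_1 = \pi\circ i_*$ kills $\ker\pi_1 = I_{\Delta_1}$, and the second because any element of $I_{\Mbar_2\setminus\Delta_1}$ already lies in the kernel of $j^*\circ\pi = \pi_2\circ j^*$ while simultaneously lying in $I_{\Delta_1}$ forces it, after applying $i^*=\id$, into the image of $\ker\pi_1$ under $i_*$.

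For the reverse containment, I would take an arbitrary $f \in \ker\pi$ and use the exactness of the top row. Its image $j^*(f) \in \ker\pi_2 = I_{\Mbar_2\setminus\Delta_1} + (1-\delta_1^{-1})A$ lands, after the section, in $I_{\Mbar_2\setminus\Delta_1}$ modulo $(1-\delta_1^{-1})$; subtracting an element of $I_{\Mbar_2\setminus\Delta_1} \subset \ker\pi$ reduces to the case $j^*(f) = 0$, i.e. $f \in (1-\delta_1^{-1})A = \im i_*$. Writing $f = (1-\delta_1^{-1})g$ and using that $i_*$ is injective on the relevant module (as $1-\delta_1^{-1}$ is a nonzerodivisor in $A$, $\delta_1$ being invertible), the relation $\pi(f)=0$ forces $g \in \ker\pi_1 = I_{\Delta_1}$, so $f \in (1-\delta_1^{-1})I_{\Delta_1}$. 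Combining the reductions shows $f$ lies in $(I_{\Delta_1}\cap I_{\Mbar_2\setminus\Delta_1}) + (1-\delta_1^{-1})I_{\Delta_1}$, matching the asserted ideal.

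The main obstacle I anticipate is the bookkeeping in the reverse containment: one must be careful that subtracting off the $I_{\Mbar_2\setminus\Delta_1}$ piece does not destroy membership in $\ker\pi$, and that the intersection term $I_{\Delta_1}\cap I_{\Mbar_2\setminus\Delta_1}$ (rather than a larger ideal) is exactly what survives. Tracking precisely when an element of $\ker\pi$ that dies under $j^*$ but is also constrained to come from $I_{\Delta_1}$ requires using injectivity of $i_*$ together with $i^*=\id$ to pin down $g$ in $\ker\pi_1$; the interplay of these two identities is what produces the intersection, and verifying it cleanly is the delicate point of the diagram chase.
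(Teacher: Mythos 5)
Your architecture is essentially the paper's: the snake lemma produces the short exact sequence of kernels, the section $s$ produces the decomposition of an element of $\ker\pi$, and injectivity of $i_*\colon \K_0(\Delta_1)\to\K_0(\Mbar_2)$ (available from the rank-$97$ sequence \eqref{diag.ses}) pins the complementary piece inside $(1-\delta_1^{-1})I_{\Delta_1}$. One of your steps coincides with an assertion the paper also makes without further argument, namely that $s$ carries $\ker\pi_2$ into $\ker\pi$, equivalently $I_{\Mbar_2\setminus\Delta_1}\subseteq\ker\pi$; be aware this is not automatic (a relation on $\Mbar_2\setminus\Delta_1$ has no a priori reason to hold on $\Mbar_2$), but you are in the same position as the paper there. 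A small slip: $i^*$ does not split $i_*$ ``on the nose,'' since $i^*\circ i_*$ is multiplication by $1-\delta_1^{-1}$ on $A$, not the identity; this is harmless for the snake lemma.

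The genuine gap is your proof of the containment $I_{\Delta_1}\cap I_{\Mbar_2\setminus\Delta_1}\subseteq\ker\pi$ --- the step that actually produces the intersection term, and exactly where the paper needs a nontrivial tool. For $x$ in the intersection you correctly obtain $j^*(\pi(x))=0$ and $i^*(\pi(x))=\pi_1(x)=0$, but to conclude $\pi(x)=0$ you need that a class in $\K_0(\Mbar_2)$ restricting to zero on both strata is zero, i.e.\ that $\ker i^*\cap\ker j^*=0$ in $\K_0(\Mbar_2)$. That is false. Indeed, if $j^*z=0$ then $z=i_*y$, and by the self-intersection formula (Proposition \ref{prop.selfintersection}) together with Lemma \ref{lemma.nonzerodivisor}, $i^*z=(1-\gamma\lambda)\,y$, where $1-\gamma\lambda$ is the restriction of $1-\delta_1^{-1}$ to $\Delta_1$. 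This element has rank zero, and any rank-zero element of $\K_0(\Delta_1)$ is a zerodivisor: $\K_0(\Delta_1)$ is free of rank $65$, so if multiplication by $1-\gamma\lambda$ were injective it would be bijective on $\K_0(\Delta_1)_\C$, making $1-\gamma\lambda$ a unit there, contradicting that the rank homomorphism $\K_0(\Delta_1)_\C\to\C$ kills it. Taking $y\neq 0$ with $(1-\gamma\lambda)y=0$, the class $z=i_*y$ is nonzero (by injectivity of $i_*$) yet restricts to zero on $\Delta_1$ and on $\Mbar_2\setminus\Delta_1$. Your parenthetical justification --- that $1-\delta_1^{-1}$ is a nonzerodivisor ``in $A$'' --- is true of the Laurent polynomial ring $A$, but the property your argument needs is that it be a nonzerodivisor in $\K_0(\Delta_1)$, and the proposal conflates the two rings. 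The paper's way around this is different in kind: using that $1-\delta_1^{-1}$ is a nonzerodivisor in $A$ and a generalization of \cite[Lemma 4.4]{VeVi:03}, it identifies $\ker\pi$, as an ideal of $A\cong A\times_{A/(1-\delta_1^{-1})}A/(1-\delta_1^{-1})$, with the fiber product of $\ker\pi_1$ and $\ker\pi_2$ over $\ker\pi_1/(1-\delta_1^{-1})$, and membership of $x$ in that fiber product is then a checkable condition on the polynomial $x$ itself. Some such statement about ideals of $A$ --- not an injectivity statement about restriction maps on $\K_0(\Mbar_2)$ --- is the missing idea; without it, the harder inclusion is unproved in your proposal.
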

\begin{proof}
By the snake lemma, we have the exact sequences on kernels
\[0 \to \ker \pi_1 \stackrel{i_*} \to \ker\pi \stackrel{j^*}\to \ker \pi_2 \to 0.\]
In addition the section $s \colon A/(1-\delta_1^{-1}) \to A$ induces
a section $s \colon \ker \pi_2 \to \ker \pi$. Thus
any element $x \in \ker \pi$ can be (uniquely) expressed
as $x = a + b$ with $a \in i_* I_{\Delta_1} = (1- \delta_1^{-1}) I_{\Delta_1}$
and $b \in s(\ker\pi_2) \subset I_{\Mbar_2 \setminus \Delta_1}$.
Moreover, 
$b \in I_{\Delta_1}$ as well because
$i^* \ker \pi \subseteq \ker \pi_1 = I_{\Delta_1}$. Hence
we conclude that
$\ker \pi \subseteq (I_{\Mbar_2 \setminus \Delta_1} \cap I_{\Delta_1}) + (1-\delta_1^{-1})I_{\Delta_1}$.

Since $A$ is a domain,
$1 - \delta_1^{-1}$ is not a zero-divisor in $A$,
and by a simple generalization of \cite[Lemma 4.4]{VeVi:03}, 
we obtain the following cartesian diagram of $A$-modules
\[\begin{tikzcd}
\ker\pi \ar[r,"j^*"] \ar[d,"i^*"] & \ker \pi_2 \ar[d] \\
\ker \pi_1 \ar[r] & \dfrac{\ker \pi_1}{\left(1 - \delta_1^{-1}\right)}
\end{tikzcd}\]
where $\ker\pi$ can be viewed as an ideal of 
ring $A$ which is identified with the fiber product 
\[\begin{tikzcd}
A \ar[r,"j^*"] \ar[d,"i^*"] & A/(1 - \delta_1^{-1}) \ar[d,"\overline{i^*}"] \\
A \ar[r] & A/\left(1 - \delta_1^{-1}\right)
\end{tikzcd}\]
by $a \mapsto (a,\overline{a})$ where $\overline{a}$ is the class of $a$
in the quotient ring $A/\left(1 - \delta_1^{-1}\right)$.

In particular, it follows from this description that $I_{\Delta_1}
\cap I_{\Mbar_2 \setminus \Delta_1} \subseteq \ker \pi$, since if
$x \in I_{\Delta_1} \cap I_{\Mbar_2 \setminus \Delta_1}$
  then its images under the two maps $\ker\pi \to \ker \pi_1/(1-\delta_1^{-1})$
  are equal.
  Hence we conclude that
  \[\ker \pi = (I_{\Delta_1} \cap I_{\Mbar_2 \setminus \Delta_1}) + (1-\delta_1^{-1})I_{\Delta_1}. \qedhere\]
\end{proof}  

\section*{Data Availability Statement}
No datasets were generated or analyzed during the current study.


\appendix
\section{Bases for $\K_0(\M_2), \K_0(\Delta_1)$ and $\K_0(\Mbar_2 \setminus \Delta_1)$ as abelian groups}\label{app.Zbasis}
A $\Z$-basis for $\K_0(\M_2)$ as finitely generated abelian group is given by
\[1, \epsilon, \epsilon^2, \epsilon^3, \epsilon^4, \epsilon^4\lambda^{-1}, \epsilon^3\lambda^{-1}, \epsilon^2\lambda^{-1}, \epsilon\lambda, \epsilon\lambda^2, \epsilon\lambda^{-1}, \epsilon\lambda^{-2}, \epsilon\lambda^{-3}, \lambda, \lambda^2, \lambda^{-1}, \lambda^{-2}, \lambda^{-3}.\]

We also provide $\Z$-bases for $\K_0(\Delta_1)$ and $\K_0(\Mbar_2 \setminus \Delta_1)$ separately as finitely generated abelian groups. For $\K_0(\Delta_1)$, it is generated as an abelian group by the following generators
\begin{align*}
& \epsilon^5, \epsilon^5 \delta, \epsilon^5 \delta^{-1}, \epsilon^5 \delta^{-2}, \epsilon^4, \epsilon^4 \delta, \epsilon^4 \delta^{-1}, \epsilon^4 \delta^2, \epsilon^4 \delta^{-2}, \epsilon^4 \delta^{-3}, \\
& \epsilon^3, \epsilon^3 \delta, \epsilon^3 \delta^{-1}, \epsilon^3 \delta^2, \epsilon^3 \delta^{-2}, \epsilon^3 \delta^3, \epsilon^3 \delta^{-3}, \epsilon^3 \delta^4, \\
& \epsilon^2, \epsilon^2 \delta, \epsilon^2 \delta^{-1}, \epsilon^2 \delta^2, \epsilon^2 \delta^{-2}, \epsilon^2 \delta^3, \epsilon^2 \delta^{-3}, \epsilon^2 \delta^4, \epsilon^2 \delta^{-4}, \epsilon^2 \delta^{-5}, \\
& \epsilon, \epsilon \delta, \epsilon \delta^{-1}, \epsilon \delta^2, \epsilon \delta^{-2}, \epsilon \delta^3, \epsilon \delta^{-3}, \epsilon \delta^4, \epsilon \delta^{-4}, \epsilon \delta^5, \epsilon \delta^{-5}, \epsilon \delta^6, \epsilon \delta^{-6}, \\
& \delta, \delta^2, \delta^3, \delta^4, \delta^5, \delta^6, \delta^{-1}, \delta^{-2}, \delta^{-3}, \delta^{-4}, \delta^{-5}, \delta^{-6}, \delta^{-7}, \\
& \delta \gamma, \delta^2 \gamma, \delta^3 \gamma, \delta^4 \gamma, \delta^{-1} \gamma, \delta^{-2} \gamma, \delta^{-3} \gamma, \delta^{-4} \gamma, \delta^{-5} \gamma, \gamma, 1.
\end{align*}

A $\Z$-basis for $\K_0(\Mbar_2 \setminus \Delta_1)$ is the following
\begin{align*}
& e_1^5, e_1^4, e_1^4 e_2^{-1}, e_1^3, e_1^3 e_2, e_1^3 e_2^{-1}, e_1^3 e_2^2, e_1^3 e_2^{-2}, e_1^2, e_1^2 e_2, e_1^2 e_2^{-1}, e_1^2 e_2^2, e_1^2 e_2^{-2}, e_1^2 e_2^{-3}, \\
& e_1, e_1 e_2, e_1 e_2^{-1}, e_1 e_2^2, e_1 e_2^{-2}, e_1 e_2^3, e_1 e_2^{-3}, e_1 e_2^4, e_1 e_2^{-4}, e_2, e_2^2, e_2^3, e_2^4, e_2^{-1}, e_2^{-2}, e_2^{-3}, e_2^{-4}, 1. 
\end{align*}

\end{document}